\tikzset{
	symbol/.style={
		draw=none,
		every to/.append style={
			edge node={node [sloped, allow upside down, auto=false]{$#1$}}}
	}
}
\theoremstyle{plain}
\newtheorem{theorem}{Theorem}[section]
\newtheorem{lemma}[theorem]{Lemma}
\newtheorem{cor}[theorem]{Corollary}
\newtheorem*{ack}{Acknowledgements}
\theoremstyle{definition}
\newtheorem{defi}[theorem]{Definition}
\newtheorem{ass}[theorem]{Assumption}
\newtheorem{remark}[theorem]{Remark}
\newtheorem{exam}[theorem]{Example}
\newtheorem{nota}[theorem]{Notation}
\newcommand{\e}{\varepsilon}
\newcommand{\Oe}{{\Omega_\e}}
\newcommand{\Oes}{{\Omega_\e^\mathrm{s}}}
\newcommand{\Ge}{{\Gamma_\e}}
\newcommand{\Yp}{{Y^\mathrm{p}}}
\newcommand{\Ys}{{Y^\mathrm{s}}}
\newcommand{\Ypx}{{Y^\mathrm{p}_x}}
\renewcommand{\div}{\operatorname{div}}
\newcommand{\R}{\mathbb{R}}
\newcommand{\Q}{\mathbb{Q}}
\newcommand{\Z}{\mathbb{Z}}
\newcommand{\N}{\mathbb{N}}
\newcommand{\xy}{(x,y)}
\newcommand{\tx}{(t,x)}
\newcommand{\txy}{(t,x,y)}
\newcommand{\norm}[2]{\left|\left|#1\right| \right|_{#2}}
\newcommand{\HepsN}{{H^1_{\Ge}(\Oe)^N}}
\newcommand{\Leps}{{L^2(\Oe)}}
\newcommand{\HYperN}{{L^2(\Omega;H^1_{\Gamma \#}(\Yp))}}
\newcommand{\intOe}{\int\limits_{\Oe}}
\newcommand{\intO}{\int\limits_{\Omega}}
\newcommand{\intYp}{\int\limits_{\Yp}}
\newcommand{\intOY}{\intO}
\newcommand{\intY}{\int\limits_{Y}}
\newcommand{\intOYp}{\intO \intYp}
\newcommand{\xeps}{\frac{x}{\e}}
\newcommand{\xxeps}{\left(x,\xeps\right)}
\newcommand{\1}{\mathbbm{1}}
\newcommand{\dx}{dx}
\newcommand{\dy}{dy}
\newcommand{\dz}{dz}
\newcommand{\dyx}{\dy\dx}
\newcommand{\Te}{\mathcal{T}_\e}
\newcommand{\xeY}{\e\left[\frac{x}{\e}\right]_Y}
\newcommand{\limEpsNull}{\lim\limits_{\varepsilon \to 0}}
\title[Homogenisation of the Stokes eq.~for evolving microstructure]{Homogenisation of the Stokes equations for evolving microstructure}
\author{David Wiedemann}
\author{Malte A. Peter}
\thanks{D.W. is partially supported by a doctoral scholarship of Studienstiftung des deutschen Volkes}
\address{(D.W.) Institute of Mathematics, University of Augsburg, 86135 Augsburg, Germany}
\email{david.wiedemann@math.uni-augsburg.de}
\address{(M.A.P.) Institute of Mathematics, University of Augsburg, 86135 Augsburg, Germany and Augsburg Centre for Innovative Technologies, Universität Augsburg, 86135 Augsburg, Germany}
\email{malte.peter@math.uni-augsburg.de}
\subjclass[2020]{Primary 76M50; Secondary 35B27, 76S05, 76D07}
\keywords{Darcy’s law; Stokes equations; homogenisation; evolving microstructure}
\begin{document}
\begin{abstract}
We consider the homogenisation of the Stokes equations in a porous medium which is evolving in time. At the interface of the pore space and the solid part, we prescribe an inhomogeneous Dirichlet boundary condition, which enables to model a no-slip boundary condition at the evolving boundary. We pass rigorously to the homogenisation limit with the two-scale transformation method. In order to derive uniform a priori estimates, we show a Korn-type inequality for the two-scale transformation method and construct a family of $\e$-scaled operators $\div_\e^{-1}$, which are right-inverse to the corresponding divergences.
The homogenisation result is a new version of Darcy's law. It features a time- and space-dependent permeability tensor, which accounts for the local pore structure, and a macroscopic compressibility condition, which induces a new source term for the pressure. In the case of a no-slip boundary condition at the interface, this source term relates to the change of the local pore volume.
\end{abstract}
\maketitle
\tableofcontents
\section{Introduction}
Effective fluid flow in completely saturated porous media can be described by Darcy's law. Based on results of experiments, it was formulated by Henry Darcy in \cite{Dar56}. This empirical law can be justified by means of homogenisation theory on the basis of general laws of fluid dynamics. Using the formal two-scale expansion, the Darcy's law has been derived from the Stokes equations quite early (see for example \cite{Kel80},\cite{Lio81},\cite{San80}). Assuming a periodic porous medium with disconnected solid matrix, L.~Tartar proved rigorously the convergence of the homogenisation for the following $\e$-scaled Problem in \cite{Tar79}.
He considered the Stokes equations in a periodic porous medium $\Omega$ with fluid phase $\Oe$ (of period $\e$) with homogeneous Dirichlet boundary conditions
\begin{align}\label{eq:StrongStokesConstantDomain}
- \e^2 \nu \Delta v_\e + \nabla p_\e = f  \textrm{ in } \Oe , &&
\div(v_\e)= 0 \textrm{ in } \Oe , &&
v_\e= 0 \textrm{ in } \partial\Oe, 
\end{align} 
where $v_\e$ and $p_\e$ denote the velocity and pressure of the fluid and $f$ the density of the forces acting on the fluid.
He proved that the extension of $v_\e$ to $\Omega$ by $0$ converges weakly to $v$ in $L^2(\Omega)$ and $P_\e$, which is an extension of the pressure $p_\e$  too the solid part of $\Omega$, converges strongly in $L^2(\Omega) / \R$ to $p$, where $v$ and $p$ are the unique solutions of
\begin{align}\label{eq:DarcysLaw}
v = \frac{1}{\nu}K(f- \nabla p) \textrm{ in } \Omega, &&
\div(v) = 0  \textrm{ in } \Omega, &&
v \cdot n= 0 \textrm{ in } \partial \Omega,
\end{align}
and $K$ is a positive definite symmetric permeability tensor which is given by solutions of Stokes problems on a reference cell. 
The main task in the proof of the convergence is the derivation of an $\e$-independent bound for $p_\e$. At this point, Tartar had to assume that the solid part of a cell is strictly contained inside the cell.
Extending the ideas of L.~Tartar, G.~Allaire could omit this assumption and proved the convergence for more general domains in ~\cite{All89}.
In \cite{All91} G.~Allaire gave some corrector results, which yield a strong convergence of the velocity in $L^2(\Omega)^N$.
	
The purpose of the present paper is to extend the problem \eqref{eq:StrongStokesConstantDomain} to a domain $\Oe(t)$ whose microstructure is evolving in time and to prove the convergence of the homogenisation for this new setting.
The case of an evolving microstructure is motivated by many different physical, chemical and biological applications such as, crystal precipitation and dissolution processes or biofilm growth models (e.g.~see~\cite{Pet07a}, \cite{PB09}, \cite{Noo08}, \cite{REK15}, \cite{RNF12}, \cite{SK17}, \cite{SK17a}). 
Some of these publications consider models which include the Stokes equations on an evolving domain. Using the method of formal two-scale expansion there are some effective models derived, which include also a Darcy law (cf.~\cite{SK17}).
	
In this present paper, we consider the rigorous homogenisation of the Stokes equations for $\e$-scaled domains $\Oe(t)$ which are evolving in time.
Thus, we consider the Stokes problem
\begin{align}\label{eq:StrongStokesEvolutionaryDomain1}
- \e^2 \nu \div( 2e(v_\e)) + \nabla p_\e &= f  &&\textrm{ in } \Oe(t) ,
\\\label{eq:StrongStokesEvolutionaryDomain2}
\div(v_\e)&= 0 && \textrm{ in } \Oe(t) ,
\\\label{eq:StrongStokesEvolutionaryDomain3}
v_\e&= v_{\Ge} && \textrm{ in } \Ge(t),
\\\label{eq:StrongStokesEvolutionaryDomain4}
p_\e n - \e^2 \nu 2e(v_\e) n  &= p_b n&& \textrm{ in } \partial\Oe(t)  \setminus \Ge(t) 
\end{align}
for a force term $f$ on a time-dependent spatial domain $\Oe(t)$ with $t \in S$ for the time interval $S$. Thereby, $e(v_\e) \coloneqq (\nabla v_\e + \nabla v_\e^\top)/2 $ denotes the symmetric gradient of $v_\e$, which we use noting that in the standard derivation of the Stokes equation from the momentum balance equation, the continuity equation and the axioms of Newtonian fluids originally imply a symmetric stress tensor. Indeed, the incompressibility condition allows to replace $2e(v_\e)$ by $\nabla v_\e$ in the strong formulation. However, for the weak formulation, this substitution requires Dirichlet boundary values on the whole boundary, which is not the case in our model.
For the boundary condition, we distinguish between the interface of pore and solid space, which is denoted by $\Ge(t)$, and the remaining boundary.
At $\Ge(t)$, we use a Dirichlet boundary condition for the fluid velocity with boundary values $v_{\Ge}$.
This (inhomogeneous) Dirichlet boundary condition  \eqref{eq:StrongStokesEvolutionaryDomain4} is motivated by the no-slip boundary condition and allows a fluid velocity equal to the velocity of the boundary's deformation, which can be modelled by $v_\Ge$.
By using a normal stress boundary condition with outer unit normal vector $n$ and a normal boundary stress $p_b$ at $\partial \Oe(t) \setminus \Ge(t)$, we enable fluid in- and outflow at the boundary of the porous medium.
Thus, even if the total pore volume is changing, there does not arise an incompatibility with the fluid incompressibility so that we can consider this case as well.
In \cite{Mir16}, the homogenisation of Stokes flow with such a normal stress boundary condition at the outer boundary is considered for the case of a rigid domain and a homogeneous Dirichlet boundary condition at the pore interface.
	
We prove that the extension of $v_\e(t)$ by $0$ to $\Omega$ and the extension of the pressure $p_\e(t)$ by a cell-wise mean value converges weakly in $L^2(\Omega)$ for a.e.~$t \in S$ to the unique solution $(v(t),p(t))$ of the following Darcy law:
\begin{align}\label{eq:DarcysLawWithVolume1}
v\tx &= \frac{1}{\nu}K\tx(f\tx- \nabla p\tx) &&\textrm{ in } S \times \Omega,
\\\label{eq:DarcysLawWithVolume2}
\div(v\tx) &= -\int\limits_{\Gamma_x(t)} v_\Gamma\txy \cdot n dy \ \  ( = -\frac{d}{dt}\Theta\tx)  &&\textrm{ in } S \times \Omega,
\\
\label{eq:DarcysLawWithVolume3}
p &= p_b &&\textrm{ in } S \times \partial \Omega,
\end{align}
where $v_\Gamma$ is the two-scale limit of $v_\Ge$ and $\Gamma_x(t)$ is the interface of the pore and solid part in the reference cell at the macroscopic position $x$ at time $t$. If $v_\Ge$ is the velocity of the boundary deformation the right-hand side of \eqref{eq:DarcysLawWithVolume2} can be simplified to $-\frac{d}{dt}\Theta\tx$, where $\Theta$ is the porosity of the medium.
 
Compared to the Darcy law \eqref{eq:DarcysLaw}, the permeability tensor $K$ depends now on time and space taking into account the shape of a pore $\Ypx(t)$ at the point $x\in \Omega$ at the time $t\in S$. Moreover, the microscopic incompressibility condition becomes a macroscopic compressibility condition \eqref{eq:DarcysLawWithVolume2}. Combined with \eqref{eq:DarcysLawWithVolume1}, this gives an additional source or sink term for the pressure $p$. In the case that $v_\Ge$ is the velocity of the boundary deformation, this term captures the suction and suppression effects arising from the change of porosity.
	
For the homogenisation of \eqref{eq:StrongStokesEvolutionaryDomain1}--\eqref{eq:StrongStokesEvolutionaryDomain4}, we use the \textit{two-scale transformation method}. Thus, we transform the problem to a substitute problem onto a periodic reference domain $\Oe$ where we pass to the limit $\e$ to $0$ using two-scale convergence. Then, we transform the resulting limit problem back (cf.~\eqref{diagram}). This method was proposed for the homogenisation of a diffusion problem in \cite{Pet07}. Later it was applied in several  works -- in the same sense that the homogenisation of the substitute problem is proven -- (\cite{Pet07a},\cite{PB09}, \cite{EM17}, \cite{GNP21}). In \cite{Wie21} a rigorous two-scale convergence concept for this transformation method was developed and the method itself proven, i.e.~that \eqref{diagram} commutes. Thus, the homogenisation result for the periodic substitute problem can be transferred rigorously to a homogenisation result of the actual problem. 
\begin{equation}\label{diagram}
\hspace{-0.1cm}
\begin{tikzcd}[scale cd=0.82, row sep=1cm, column sep = 4.9cm]
\textrm{evolving microproblem} \arrow[r, "\textrm{homogenisation on the evolving domain}"] \arrow[d, "\textrm{transformation}" ] & \textrm{evolving macroproblem} 
\arrow[d, leftarrow, "\textrm{back-transformation}"] \\
\textrm{transformed microproblem}
\arrow[r, "\textrm{homogenisation on periodic reference domain}"]& \textrm{transformed macroproblem}
\end{tikzcd}
\end{equation}

After the transformation onto the periodic reference domain, we derive uniform a priori estimates for the velocity field $v_\e$ and the pressure field $p_\e$. 
However, the transformation of the equation induces coefficients in the symmetric gradient. Therefore, we derive a uniform Korn-type inequality for the two-scale transformation method, which allows to estimate the transformed symmetric gradient from below.

Since the extension of the inhomogeneous Dirichlet boundary condition inside the domain is not necessarily divergence-free, we cannot estimate the velocity directly without estimating pressure as is done in the existing works on the homogenisation of Stokes flow.
Instead, we develop a family of $\e$-scaled operators $\div_\e^{-1}$, which are right inverse to the corresponding divergences, using the restriction operator which was introduced in \cite{Tar79} and developed further in \cite{All89}. Using these, we can deduce an $\e$-independent estimate on the velocity and the pressure during the existence proof without having an a priori estimate on the velocity at hand. 
For the case of a rectangular macroscopic domain with periodic Dirichlet boundary conditions, such a family of operators was constructed by V.~V.~Zhikov by a different idea (cf.~Ref.~\cite{Zhi94}). 

Having obtained these uniform a priori estimates, we can pass to the limit $\e \to 0$ in the reference configuration. There, we prove the strong convergence of the extension of the pressure. Then, we use two-scale compactness results in order to derive a microscopic incompressibility condition and a macroscopic compressibility condition. In the last step of the passage to the limit $\e \to 0$, we homogenise \eqref{eq:StrongStokesEvolutionaryDomain1} for divergence-free functions and reconstruct a microscopic pressure. The intermediate result is a two-pressure Stokes system in the cylindrical two-scale domain.

Afterwards, we transform this two-pressure Stokes system back into the reference configuration. Since the system contains not only microscopic derivatives but also macroscopic derivatives, it does not yield a transformation-independent result directly. The same problem occurs also by the formal back-transformation after the homogenisation of diffusion and elasticity equations in a periodic reference domain (cf.~\cite{Pet07}, \cite{EM17}). Using the results of \cite{Wie21}, the back-transformation can be done rigorously and a transformation-independent two-scale limit problem can be derived. We develop the ideas of \cite{Wie21} further and show how the back-transformed two-pressure Stokes system can be written independent of the transformation. In the last step, we separate the microscopic from the macroscopic scale and derive thereby the \textit{Darcy's law for evolving microstructure}.
This Darcy's law is different from the standard Darcy's law by its time- and space-dependent permeability tensor, which corresponds to the time- and space-dependent microscopic porosity. Moreover, it contains a new source term for the pressure equation. This term captures the suction and compression effects arising from the change of the porosity.

The paper is organized as follows: in Section \ref{section:Transformation}, we transform the Stokes problems onto the periodic domain $\Oe$ by the two-scale transformation method. In Section \ref{section:AprioriEstimates}, we derive a uniform Korn inequality for the two-scale transformation method and a family of $\e$-scaled operators $\div_\e^{-1}$, which are right inverse to the corresponding divergences. Using these results, we give uniform a priori estimates, which allow to pass to the homogenisation limit.
In Section \ref{section:Homogenisation}, we pass to the limit $\e \to 0$ in the reference configuration and derive a two-pressure Stokes equation.
In Section \ref{section:BackTrafo}, we transform this two-scale limit problem back to the actual domain and derive \eqref{eq:DarcysLawWithVolume1}--\eqref{eq:DarcysLawWithVolume3}, which we call Darcy's law for evolving microstructure.

\begin{nota}
In the following, let $C \in \R$ be a generic constant independent of $t \in S$ and $\e >0$. Moreover, let $1 \leq p_s \leq \infty $ and $C(t)$ be a generic time-dependent function independent of $\e$, where $C \in L^{p_s}(S)$

For an open set $U \subset \R^N$, we write $(f,g)_U$ for the scalar product of $f,g\in L^2(U)$ and define $\norm{f}{U} \coloneqq (f,f)_U^{\frac{1}{2}}$.
If $G \subset \partial U$ is Lipschitz regular, we define $H^1_G(U)\coloneqq \{v \in H^1(U) \mid v|_G = 0 \}$. Moreover, we define $H^1_{\Gamma\#}(\Yp) \coloneqq \{v \in H^1_\Gamma(\Yp) \mid  v \textrm{ is } Y\textrm{-periodic} \}$ for an open Lipschitz set $\Yp \subset (0,1)^N$ and $\Gamma = \partial \Yp \cap \partial (Y \setminus \Yp)$.
In the following, we identify $H^1_{\Gamma\#}(\Yp)$ with $\{v \in H^1_\#(Y) \mid v|_{Y \setminus \Yp} = 0 \}$ if the function has to be defined on the whole of $Y$.
\end{nota}

\section{The Stokes problem on the microscopic scale}\label{section:Transformation}
Let $S = (0,T)$ be the finite time interval. Let $\Omega \subset \R^N$ be a bounded and connected domain which can be represented as a finite union of axis-parallel cuboids with corner coordinates in $\Q^N$, representing the macroscopic domain of the porous medium. Thus, there exists a sequence $(\e_n)_{n \in \N}$ such that $\Omega = \operatorname{int}\Big(\bigcup_{k \in I_{\e_n} } k + \e_n \overline{Y}\Big)$ for every $n\in \N$, where $I_\e \coloneqq\{k \in \e\Z^N \mid \operatorname{int}(k + \e Y) \subset \Omega \}$ for $\e >0$ and  $Y \coloneqq (0,1)^{N\times N}$ is the microscopic reference cell. We consider in the following such a fixed sequence $(\e_n)_{n \in \N}$ with $0 <\e_n <1$ for every $n \in \N$ and write shortly $\e$.
Let $\Yp \subset Y$ be open and $Y^\mathrm{s} \coloneqq \operatorname{int}(Y \setminus \Yp)$ such that
\begin{enumerate}
\item $\Yp$ and $Y^\mathrm{s}$ have positive measure,
\item $\Yp$ is a connected set with Lipschitz boundary,
\item $\partial \Yp \cap \{x_i = 0\} +e_i = \partial \Yp \cap \{x_i = 1\}$ for every $i \in\{1, \dots, N \}$, 
\item $Y^p_\# \coloneqq \operatorname{int}\Big(\bigcup\limits_{k \in \Z^N} k + \overline{\Yp}\Big)$ is connected and has a $C^1$-boundary.
\end{enumerate}
These sets denote the pore part and the solid part of the reference cell. We denote the interface of them by $\Gamma \coloneqq \partial \Yp \cap \partial Y^\mathrm{s}$.

We define $\Oe$, which represents the pore part in the reference configuration, by
$\Oe  \coloneqq \operatorname{int}\Big(\bigcup_{k \in I_\e}  k + \e \overline{\Yp}  \Big)$,
the corresponding solid part by 
$\Oes  \coloneqq \operatorname{int}\Big(\Omega \setminus \Oe \Big)$ and the interface of these by $\Ge = \partial \Oe \cap \partial \Oes$.
Note that $\Oe$ is connected and $\Ge$ as well as the remaining part of the boundary $\partial \Oe \setminus \Ge$ are Lipschitz regular by their construction.

We assume that, for a.e.~$t\in S$, the evolving domain $\Oe(t)$ and the evolving surface $\Ge$ can be described by locally periodic transformations $\psi_\e \in L^\infty(S;C^1(\overline{\Omega})^N)$. That means $\Oe(t) = \psi_\e(t,\Oe)$, $\Oes(t) = \psi_\e(t,\Oes)$, $\Ge(t) = \psi_\e(t,\Ge)$ for a.e.~$t\in S$.

We denote the Jacobians of $\psi_\e$ with respect to $x$ by $\Psi_\e\tx = D \psi_\e\tx$ and $J_\e\tx \coloneqq \det(\Psi_\e\tx)$.
\newpage
\begin{ass}\label{ass:psi}We assume that 
\begin{enumerate}
\item $\psi_\e(t,\cdot_x)$ is a $C^2$-diffeomorphism from $\overline{\Omega}$ onto $\overline{\Omega}$ for a.e.~$t\in S$ with inverse $\psi_\e^{-1}(t,\cdot_x)$ for $\psi_\e, \psi_\e^{-1} \in L^\infty(S;C^2(\overline{\Omega}))$ 
\item there exists $c_J >0$ such that $J_\e(t) \geq c_J$ for a.e.~$t\in S$,
\item there exists $C>0$ such that $\e^{i-1}\norm{ \check{\psi}_\e}{L^\infty(S;C^i(\overline{\Omega}))}\leq C$ for $i \in \{0,1,2\}$, where $\check{\psi}_\e(x) \coloneqq \psi_\e(x) -x$ are the corresponding displacement mappings,
	
\item there exists $\psi_0 \in L^\infty(S \times \Omega;C^2(\overline{Y}))^N$ such that 
\begin{enumerate}
\item
$\psi_0(t,x,\cdot_y) : \overline{Y} \rightarrow  \overline{Y}$ are $C^2$-diffeomorphisms for a.e.~$\tx\in S\times \Omega$ with inverses $\psi_0^{-1}(t,x,\cdot_y)$ for $\psi_0, \psi_0^{-1} \in L^\infty(S\times \Omega;C^2(\overline{Y}))$,
\item the corresponding displacement mapping $\check{\psi}_\e(t,x,y) \coloneqq \psi_\e(t,x,y) -y$, can be extended $Y$-periodically such that $\check{\psi} \in L^\infty(\Omega;C^2_\#(\overline{Y}))$,
\item $\e^{|\alpha| -1} D_{x_\alpha}\check{\psi}_\e(t)$ two-scale converges strongly with respect to every $L^p$-norm for $p\in (1,\infty)$ to $D_{y_\alpha}\check{\psi}_0(t)$ for every multiindex $\alpha \in \{0,1,2\}^{N}$ with $|\alpha| \leq 2$.
\end{enumerate}
\end{enumerate}
\end{ass}

An evolution of the domains which satisfy Assumption \ref{ass:psi} can be obtained for example from the following model:
\begin{exam}
Let $\Theta : [0,T] \times \overline{\Omega} \rightarrow (0,1)$ be a smooth function with $D \Theta$ small enough (which describes for example the local porosity) and let $\psi_0 : (0,1) \times \Yp \rightarrow \overline{Y}$	be a smooth mapping such that $\psi_0(\Theta, \Yp)$ gives a cell with porosity $\Theta$ and the corresponding displacement mapping $\tilde{\psi}_0(\Theta,y) = \psi_0(\Theta, y)-y$ can be extended to a $Y$-periodic function.
Then,
\begin{align*}
\psi_\e \tx \coloneqq x + \e \tilde{\psi}_0\left(\Theta\tx, \frac{x}{\e}\right), \hspace{1cm} \psi_0 \txy \coloneqq \tilde{\psi}_0(\Theta\tx, y)
\end{align*}
fulfil Assumptions \ref{ass:psi}. 
\end{exam}

\begin{ass}\label{ass:Data}
Let $p_s \in [1,\infty]$ be fixed. We assume on the data that:
\begin{enumerate}
\item $f_\e$ is a sequence in $L^{p_s}(S;L^2(\Omega))$ such that $\norm{f_\e(t)}{L^2(\Omega)} \leq C(t)$ for a.e.~$t\in S$ for $C\in L^{p_s}(S)$
\item there exists $f\in L^{p_s}(S;L^2(\Omega))$ such that 
$f_\e(t)$ two-scale converges weakly with respect to the $L^2$-norm to $f(t)$ for a.e.~$t\in S$,
\item $p_{b,\e}$ is a sequence in $L^{p_s}(S;H^1(\Omega))$ such that $\norm{p_{b,\e}(t)}{L^2(\Omega)} \leq C(t)$ for a.e.~$t\in S$ for $C\in L^{p_s}(S)$,
\item there exists $(p_b,p_{b,1})\in L^{p_s}(S;H^1(\Omega)) \times L^{p_s}(S;L^2(\Omega;H^1_\#(Y)/\R))$ such that 
$\nabla p_{b,\e}(t)$ two-scale converges weakly with respect to the $L^2$-norm to $\nabla_x p_b(t) + \nabla_y p_{b,1}(t)$ for a.e.~$t\in S$,
\item $v_\Ge$ is a sequence in $L^{p_s}(S;H^1(\Omega))$ such that $\frac{1}{\e}\norm{v_\Ge(t)}{L^2(\Omega)} \leq C(t)$ for a.e.~$t\in S$ and $\norm{\nabla v_\Ge(t)}{L^2(\Omega)} \leq C(t)$ for a.e.~$t\in S$ for $C\in L^{p_s}(S)$, \color{black}
\item there exists $v_\Gamma \in L^{p_s}(S;L^2(\Omega;H^1_\#(Y)))$ such that $\frac{1}{\e} v_\Ge(t)$ two-scale converges weakly with respect to the $L^2$-norm to $v_\Gamma(t)$ for a.e.~$t\in S$ and $\nabla v_\Ge(t)$ two-scale converges weakly with respect to the $L^2$-norm to $\nabla_y v_\Gamma(t)$  for a.e.~$t\in S$, 

\item if $v_\Ge$ should be the velocity of the boundary deformation, i.e.~$v_\Ge\tx = \partial_t \psi_\e(t, \psi_\e^{-1}\tx )$, we assume that $\psi_\e $ is a sequence in $W^{1,p_s}(S;H^1(\Omega))$ such that $\frac{1}{\e}\norm{\partial_t \psi_\e(t)}{L^2(\Omega)} \leq C(t)$ for a.e.~$t \in S$ for $C \in L^{p_s}(S)$. Moreover, we assume that $\psi_0 \in W^{1,p_s}(S;L^2(\Omega;H^1_\#(Y)))$ such that $\e^{|\alpha|-1}D_{x_\alpha} \partial_t \psi_\e(t)$ two-scale converges weakly with respect to the $L^2$-norm to $D_{y_\alpha} \partial_t \psi_0(t)$ for a.e.~$t\in S$ and every multiindex $\alpha \in \{0,1\}^N$ with $|\alpha|\leq 1$.
\end{enumerate}
\end{ass}

Since we consider the stationary Stokes equations, time becomes only a parameter. Therefore, we have formulated the previous assumptions in a way that allows us to consider the equation and the homogenisation process pointwise in time for a.e.~$t \in S = (0,T)$. However, we have assumed for all used quantities the measurability with respect to time. Thus, we can show that the solutions of the $\e$-scaled problem can be uniformly bounded for a.e.~$t \in S$ by a $L^{p_s}(S)$ bound for a fixed $p_s \in [1,\infty]$, which allows to translate the two-scale convergence into the time-dependent two-scale convergence, which is used in parabolic problems. This allows a coupling of the Stokes problem with such process in future works.

In order to derive a weak form for \eqref{eq:StrongStokesEvolutionaryDomain1}--\eqref{eq:StrongStokesEvolutionaryDomain2}, we substitute the boundary values and define
$w_\e(t) \coloneqq v_\e(t) - v_\Ge(t)$ and $q_\e(t) \coloneqq p_\e(t) - p_{b,\e}(t)$.
Then, we multiply \eqref{eq:StrongStokesEvolutionaryDomain1} by test functions $\varphi$ which are $0$ on $\Ge(t)$ and integrate over $\Oe(t)$. After integration by parts and using the boundary conditions \eqref{eq:StrongStokesEvolutionaryDomain3}--\eqref{eq:StrongStokesEvolutionaryDomain4}, we get \eqref{eq:WeakFormUntransformed1}.
In addition, we multiply \eqref{eq:StrongStokesEvolutionaryDomain2} with a test function $\phi \in L^2(\Oe(t))$ and integrate over $\Oe(t)$. Then, we obtain the following weak form:

Find $(w_\e,q_\e) \in L^{p_s}(S; H^1_{\Ge(t)}(\Oe(t)))  \times  L^{p_s}(S; L^2(\Oe(t)))$ such that, for a.e.~$t\in S$,
\begin{align}\notag
\int\limits_{\Oe(t)} \nu\e^2 2e(w_\e\tx) : \nabla \varphi(x) - q_\e\tx \div(\varphi(x)) = \int\limits_{\Oe(t)} f_\e \tx \cdot \varphi(x) dx
\\\label{eq:WeakFormUntransformed1}
- 
\int\limits_{\Oe(t)} \nu\e^2 2 e(v_\Ge\tx) : \nabla \varphi(x) +\nabla p_{b,\e}\tx \cdot \varphi(x) dx
\\\label{eq:WeakFormUntransformed2}
\int\limits_{\Oe(t)}\div(w_\e\tx) \phi(x) dx
=
-\int\limits_{\Oe(t)}\div(v_\Ge\tx) \phi(x) dx
\end{align}
for every $(\varphi, \phi) \in H^1_{\Ge(t)}(\Oe(t))^N \times L^2(\Oe(t))$.

\subsection{Transformation on the periodic reference domain}
We transform the given data on the reference configuration by
\begin{align}\label{eq:TrafoData}
\hat{f}_\e\tx \coloneqq f(t,\psi_\e\tx), \ \hat{p}_{b,\e}(x,t) \coloneqq p_{b,\e}(t,\psi_\e\tx), \ \hat{v}_{\Ge}(x,t) \coloneqq v_{\Ge}(t,\psi_\e\tx)
\end{align}
and define $A_\e = J_\e \Psi_\e^{-1}$ as well as the transformed symmetric gradient $\hat{e}_{\e,t}(v)  \coloneqq (\Psi_\e^{-\top}(t) \nabla v + (\Psi_\e^{-\top}(t) \nabla v)^\top )/2$.
Then, we transform  \eqref{eq:WeakFormTransformed1}--\eqref{eq:WeakFormTransformed2} onto the periodic reference domain and obtain the following problem:

Find $\hat{w}_\e,\hat{q}_\e \in L^{p_s}(S; H^1_\Ge(\Oe))  \times  L^{p_s}(S; L^2(\Oe))$ such that for a.e.~$t\in S$
\begin{align}\notag
\intOe \nu\e^2 A_\e\tx2\hat{e}_{\e,t}(\hat{w}_\e\tx) : \nabla \varphi(x) - \hat{q}_\e\tx \div(A_\e(t,x)\varphi(x)) 
\\\notag
= \intOe J_\e\tx\hat{f}_\e\tx \cdot \varphi(x) dx- 
\intOe \nu\e^2 A_\e\tx 2 \hat{e}_{\e,t}(\hat{v}_\Ge\tx) : \nabla \varphi(x) dx
\\\label{eq:WeakFormTransformed1}
- 
\intOe A_\e^\top\tx\nabla \hat{p}_{b,\e}\tx \cdot \varphi(x) dx
\\\label{eq:WeakFormTransformed2}
\intOe\div(A_\e\tx\hat{w}_\e\tx) \phi(x) dx
=
-\intOe\div(A_\e\tx\hat{v}_\Ge\tx) \phi(x) dx
\end{align}
for every $(\varphi, \phi) \in H^1_{\Ge}(\Oe)^N \times L^2(\Oe)$.

\begin{lemma}\label{lemma:EquivalenceTransformation}
Problem \eqref{eq:WeakFormUntransformed1}--\eqref{eq:WeakFormUntransformed2} is equivalent to  \eqref{eq:WeakFormTransformed1}--\eqref{eq:WeakFormTransformed2}, in the sense that $(w_\e,q_\e)$ solves \eqref{eq:WeakFormUntransformed1}--\eqref{eq:WeakFormUntransformed2} if and only if $(\hat{w}_\e,\hat{q}_\e)$ solves \eqref{eq:WeakFormTransformed1}--\eqref{eq:WeakFormTransformed2}, where the solutions can be transformed by $\hat{w}_\e\tx = w_\e(t,\psi_\e\tx)$ and  $\hat{q}_\e\tx = q_\e(t,\psi_\e\tx)$.
\end{lemma}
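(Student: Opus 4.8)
The plan is to prove the equivalence by a direct change of variables $x \mapsto \psi_\e(t,x)$, exploiting the standard transformation rules for integrals, gradients, and divergences under a $C^2$-diffeomorphism, which are available since Assumption \ref{ass:psi} guarantees $\psi_\e(t,\cdot)$ is a $C^2$-diffeomorphism of $\overline{\Omega}$ with $J_\e(t) \geq c_J > 0$. First I would fix $t \in S$ and set up the correspondence between test functions: given $\varphi \in H^1_{\Ge}(\Oe)^N$ and $\phi \in L^2(\Oe)$ on the reference domain, I would need to identify the right push-forward to test functions on $\Oe(t)$. The natural choice dictated by the structure of \eqref{eq:WeakFormTransformed1}--\eqref{eq:WeakFormTransformed2} is the Piola-type transform: a test function $\varphi(x)$ on $\Oe$ corresponds to $\tilde\varphi(y) \coloneqq \tfrac{1}{J_\e(t,\psi_\e^{-1}(t,y))}\Psi_\e(t,\psi_\e^{-1}(t,y))\,\varphi(\psi_\e^{-1}(t,y))$ on $\Oe(t)$, since then $\div_y \tilde\varphi(y)$ pulls back (up to the Jacobian factor) to $\div_x(A_\e(t,x)\varphi(x))$ — this is the classical Piola identity $\div_y\big(J^{-1}\Psi\, \varphi\circ\psi^{-1}\big)\circ\psi = J^{-1}\div_x(A_\e \varphi)$. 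For the scalar test function one simply takes $\tilde\phi(y) \coloneqq \phi(\psi_\e^{-1}(t,y))$. I would check that these maps are bijections between the respective test function spaces and that they preserve the vanishing trace on $\Ge(t)$ versus $\Ge$ (using that $\psi_\e(t,\Ge) = \Ge(t)$ and $\psi_\e \in C^2$), and that the same transform applied to $w_\e$ gives $\hat w_\e\tx = w_\e(t,\psi_\e\tx)$ — here one must verify that the Piola transform of a \emph{divergence-free} vector field is the naive composition times nothing extra is \emph{not} quite what is claimed; rather the claim is $\hat w_\e = w_\e\circ\psi_\e$ directly, so I would instead transform the velocity as a scalar-type composition and only Piola-transform the \emph{test} functions, which is consistent with the fact that \eqref{eq:WeakFormTransformed2} tests $\div(A_\e \hat w_\e)$ rather than $A_\e \hat e$.

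Concretely, I would proceed term by term. For \eqref{eq:WeakFormUntransformed2}: substituting $x = \psi_\e(t,z)$, $dx = J_\e\,dz$, and using $\div_x(v_\e) \circ \psi_\e = J_\e^{-1}\div_z(A_\e\, \hat v_\e)$ (Piola) with test function $\phi = \tilde\phi\circ\psi_\e^{-1}$, the integral $\int_{\Oe(t)}\div(w_\e)\phi\,dx$ becomes $\int_{\Oe}\div(A_\e\hat w_\e)\tilde\phi\,dz$, and likewise for the right-hand side, yielding \eqref{eq:WeakFormTransformed2}. For \eqref{eq:WeakFormUntransformed1}: the term $\int q_\e\div(\varphi)\,dx$ transforms, with the Piola-transformed test function, into $\int \hat q_\e\div(A_\e\varphi)\,dz$; the force term $\int f_\e\cdot\varphi\,dx$ picks up the Jacobian and becomes $\int J_\e\hat f_\e\cdot\varphi\,dz$ once one accounts for the Piola factor on $\varphi$ (the $J_\e$ from $dx$ and the $J_\e^{-1}\Psi_\e$ from the transform combine with the transpose appropriately — I would track the matrix bookkeeping carefully here); the viscous term $\int \e^2\nu\,2e(w_\e):\nabla\varphi\,dx$ transforms using $\nabla_x = \Psi_\e^{-\top}\nabla_z$ acting on compositions, which produces exactly $\Psi_\e^{-\top}\nabla\hat w_\e$ inside the symmetrisation, i.e.\ $2\hat e_{\e,t}(\hat w_\e)$, paired against $\nabla\varphi$ with the leftover Jacobian assembled into $A_\e$; and the pressure-gradient boundary term $\int\nabla p_{b,\e}\cdot\varphi\,dx$ becomes $\int A_\e^\top\nabla\hat p_{b,\e}\cdot\varphi\,dz$ by the same rule. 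Assembling these gives \eqref{eq:WeakFormTransformed1}.

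The main obstacle I anticipate is the bookkeeping of which transformation rule applies to the solution versus the test function, and in particular justifying that $\hat w_\e = w_\e\circ\psi_\e$ (naive composition) is the correct correspondence while the test functions must be Piola-transformed — these are consistent precisely because $A_\e = J_\e\Psi_\e^{-1}$ and the identity $\div_x(A_\e(\varphi\circ\psi_\e^{-1})\circ\psi_\e) \,J_\e^{-1}= (\div_x\tilde\varphi)\circ\psi_\e$ intertwines the two. A secondary technical point is the regularity: since $\hat w_\e$ and $\varphi$ live only in $H^1$, all manipulations must be done at the level of weak derivatives, so I would either argue by density of smooth functions (using $\psi_\e(t,\cdot) \in C^2$ so that composition preserves $H^1$ and commutes with the chain rule in the weak sense, and Piola transform maps $H^1_{\Ge}(\Oe)^N$ boundedly onto $H^1_{\Ge(t)}(\Oe(t))^N$ with bounded inverse thanks to $J_\e \geq c_J$ and the $C^2$-bounds) or invoke the standard change-of-variables theorem for Sobolev functions under bi-Lipschitz (here $C^2$) maps. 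Once the correspondence of test functions is a bijection and each of the finitely many integral terms is shown to match, the biconditional "$(w_\e,q_\e)$ solves the untransformed problem iff $(\hat w_\e,\hat q_\e)$ solves the transformed one" follows immediately, since the variational identities hold for all test functions on one side exactly when they hold for all test functions on the other; measurability in $t$ and the $L^{p_s}(S)$-integrability transfer because $\psi_\e, \psi_\e^{-1} \in L^\infty(S;C^2(\overline\Omega))$.
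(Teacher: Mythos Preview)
Your overall plan---fix $t$, perform the change of variables $y=\psi_\e(t,x)$, and match each integral term---is exactly what the paper intends (its own proof is the single line ``using the product rule, we can transform between \eqref{eq:WeakFormUntransformed1}--\eqref{eq:WeakFormUntransformed2} and \eqref{eq:WeakFormTransformed1}--\eqref{eq:WeakFormTransformed2}''). But your test-function correspondence is inverted. The system \eqref{eq:WeakFormTransformed1}--\eqref{eq:WeakFormTransformed2} arises when \emph{both} the unknowns and the vector test function are transformed by plain composition, $\tilde\varphi(y)=\varphi(\psi_\e^{-1}(t,y))$, not by the Piola map you propose. The Piola identity actually goes the other way: for the naive pullback $\varphi=\tilde\varphi\circ\psi_\e$ one has $(\div_y\tilde\varphi)\circ\psi_\e=J_\e^{-1}\div_x(A_\e\varphi)$ (since $\div_x A_\e=0$), and this is precisely what produces the term $\hat q_\e\,\div(A_\e\varphi)$ in \eqref{eq:WeakFormTransformed1}; by contrast, your Piola pushforward $\tilde\varphi=J_\e^{-1}\Psi_\e\,\varphi\circ\psi_\e^{-1}$ gives $(\div_y\tilde\varphi)\circ\psi_\e=J_\e^{-1}\div_x\varphi$, with no $A_\e$ inside.

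A quick sanity check on the force term confirms this: with naive composition, $\int_{\Oe(t)}f_\e\cdot\tilde\varphi\,dy=\int_{\Oe}J_\e\hat f_\e\cdot\varphi\,dx$, matching \eqref{eq:WeakFormTransformed1}; your Piola choice would instead yield $\int_{\Oe}\Psi_\e^\top\hat f_\e\cdot\varphi\,dx$, which does not. You in fact already apply the correct version when you write $\div(v_\e)\circ\psi_\e=J_\e^{-1}\div(A_\e\hat v_\e)$ for the solution in the divergence equation---that is the naive-composition case---so the fix is simply to use the \emph{same} rule for the test functions. Once this is corrected, every term falls out via the chain rule $\nabla_y(\cdot\circ\psi_\e^{-1})\circ\psi_\e=\Psi_\e^{-\top}\nabla_x(\cdot)$ together with $\div_x A_\e=0$, and your remarks on bijectivity of the test-function correspondence and $H^1$-regularity under $C^2$ diffeomorphisms carry over unchanged.
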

\begin{proof}
Using the product rule, we can transform between \eqref{eq:WeakFormUntransformed1}--\eqref{eq:WeakFormUntransformed2} and  \eqref{eq:WeakFormTransformed1}--\eqref{eq:WeakFormTransformed2}.
\end{proof}

\section{Existence and uniform a priori estimates}\label{section:AprioriEstimates}
In this section, we show the following existence and uniqueness result for \eqref{eq:WeakFormTransformed1}--\eqref{eq:WeakFormTransformed2} and derive an $\e$-independent bound for the solution.
\begin{theorem}\label{thm:ExistenceEps}
For given $\e>0$, Problem \eqref{eq:WeakFormTransformed1}--\eqref{eq:WeakFormTransformed2} has a unique solution  $(\hat{w}_\e, \hat{q}_\e) \in  L^p(S;\HepsN)$ $\times L^p(S;L^2(\Omega_\e))$
such that 
\begin{align}\label{eq:EpsIndependentEstimateVP}
\norm{\hat{w}_\e(t)}{\Omega_\e}
+ \e \norm{\nabla \hat{w}_\e(t)}{\Omega_\e}
+
\norm{\hat{q}_\e(t)}{\Oe}
\leq C(t)
\end{align}
for a.e.~$t \in S$
and $C \in L^{p_s}(S)$.
\end{theorem}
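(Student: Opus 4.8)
The plan is to read \eqref{eq:WeakFormTransformed1}--\eqref{eq:WeakFormTransformed2}, for a.e.\ fixed $t\in S$, as a generalised Stokes saddle-point problem and to apply the Babu\v{s}ka--Brezzi (inf--sup) theory; the two non-standard inputs are a Korn-type inequality for the transformed symmetric gradient $\hat{e}_{\e,t}$ and the family $\div_\e^{-1}$ of $\e$-scaled right inverses of $\div\colon\HepsN\to\Leps$, both furnished by the other results of this section. One writes $a_\e(u,\varphi)\coloneqq\intOe\nu\e^2 A_\e(t)\,2\hat{e}_{\e,t}(u):\nabla\varphi\,\dx$ on $\HepsN\times\HepsN$ and $b_\e(\varphi,q)\coloneqq-\intOe q\,\div(A_\e(t)\varphi)\,\dx$ on $\HepsN\times\Leps$. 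By the Piola identity the columns of $A_\e=J_\e\Psi_\e^{-1}$ are divergence-free, so $\div(A_\e\varphi)$ is a first-order expression in $\varphi$ with bounded coefficients, and by Assumption~\ref{ass:psi} both forms are continuous with respect to the $\e$-weighted norm $\norm{\varphi}{\Oe}+\e\norm{\nabla\varphi}{\Oe}$. The problem reads: find $(\hat w_\e(t),\hat q_\e(t))\in\HepsN\times\Leps$ with $a_\e(\hat w_\e,\varphi)+b_\e(\varphi,\hat q_\e)=F(\varphi)$ for all $\varphi\in\HepsN$ and $b_\e(\hat w_\e,\phi)=-\intOe\phi\,\div(A_\e\hat v_\Ge)\,\dx$ for all $\phi\in\Leps$, with $F$ denoting the right-hand side of \eqref{eq:WeakFormTransformed1}.

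For coercivity, a short computation using $\det\Psi_\e=J_\e\geq c_J$ (Assumption~\ref{ass:psi}) gives $A_\e\,2\hat{e}_{\e,t}(u):\nabla u=2J_\e\,|\hat{e}_{\e,t}(u)|^2\geq 2c_J|\hat{e}_{\e,t}(u)|^2$, hence $a_\e(u,u)\geq 2c_J\nu\e^2\norm{\hat{e}_{\e,t}(u)}{\Oe}^2$. The uniform Korn inequality for the two-scale transformation method then converts this into $a_\e(u,u)\geq c\nu\e^2\norm{\nabla u}{\Oe}^2$, and since $u$ vanishes on $\Ge$, which meets every $\e$-cell, the Poincar\'e inequality on $\Heps$ (with constant $O(\e)$) upgrades it to $a_\e(u,u)\geq c\nu(\norm{u}{\Oe}^2+\e^2\norm{\nabla u}{\Oe}^2)$, i.e.\ coercivity of $a_\e$ on all of $\HepsN$ with constant uniform in $\e$ and $t$. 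For the inf--sup condition, given $q\in\Leps$ set $\varphi_q\coloneqq A_\e^{-1}\div_\e^{-1}(q)=J_\e^{-1}\Psi_\e\,\div_\e^{-1}(q)$, so $\div(A_\e\varphi_q)=q$; the bounds of Assumption~\ref{ass:psi}(3) (so that $\Psi_\e$ is bounded in $C^0$ and $\e\,D\Psi_\e$ in $C^0$, uniformly in $\e$) together with the defining scaling $\norm{\div_\e^{-1}(q)}{\Oe}+\e\norm{\nabla\div_\e^{-1}(q)}{\Oe}\leq C\norm{q}{\Oe}$ give $\norm{\varphi_q}{\Oe}+\e\norm{\nabla\varphi_q}{\Oe}\leq C\norm{q}{\Oe}$, so that $b_\e(\varphi_q,q)=-\norm{q}{\Oe}^2$ yields the uniform inf--sup estimate. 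The Babu\v{s}ka--Brezzi theorem now gives, for a.e.\ $t\in S$, a unique pair $(\hat w_\e(t),\hat q_\e(t))$; measurability in $t$ follows from measurability of the coefficients and data (Assumptions~\ref{ass:psi} and~\ref{ass:Data}) and continuity of the solution map, so $\hat w_\e$ and $\hat q_\e$ are strongly measurable on $S$.

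For the bound \eqref{eq:EpsIndependentEstimateVP} one cannot simply test with $\hat w_\e$, since $\hat w_\e$ is not divergence-free with respect to $\div(A_\e\cdot)$; this is where $\div_\e^{-1}$ enters. Put $\varphi_0\coloneqq -A_\e^{-1}\div_\e^{-1}\big(\div(A_\e\hat v_\Ge)\big)$, so that $\hat w_\e-\varphi_0$ lies in the kernel of $b_\e(\cdot,q)$ (using the constraint equation and the right-inverse property of $\div_\e^{-1}$); using $\nabla_x\hat v_\Ge=(\nabla v_\Ge)(t,\psi_\e)\Psi_\e$, a change of variables and Assumption~\ref{ass:Data}(5) give $\norm{\div(A_\e\hat v_\Ge)}{\Oe}\leq C(t)$ and hence $\norm{\varphi_0}{\Oe}+\e\norm{\nabla\varphi_0}{\Oe}\leq C(t)$. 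Testing \eqref{eq:WeakFormTransformed1} with $\varphi=\hat w_\e-\varphi_0$ annihilates the $\hat q_\e$-term; rearranging, using coercivity on the left, and estimating $F$ term by term --- the factors $J_\e\hat f_\e$, $\e^2 A_\e\,2\hat e_{\e,t}(\hat v_\Ge)$ and $A_\e^\top\nabla\hat p_{b,\e}$ have $L^2(\Oe)$-norms controlled by $C(t)$ up to the displayed $\e$-powers, by Assumption~\ref{ass:Data} --- yields $\norm{\hat w_\e(t)-\varphi_0}{\Oe}+\e\norm{\nabla(\hat w_\e(t)-\varphi_0)}{\Oe}\leq C(t)$, hence the velocity part of \eqref{eq:EpsIndependentEstimateVP}. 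For the pressure, take $\varphi^*\coloneqq A_\e^{-1}\div_\e^{-1}(\hat q_\e)$, so $\div(A_\e\varphi^*)=\hat q_\e$ and $\norm{\varphi^*}{\Oe}+\e\norm{\nabla\varphi^*}{\Oe}\leq C\norm{\hat q_\e}{\Oe}$; testing \eqref{eq:WeakFormTransformed1} with $\varphi^*$ gives $\norm{\hat q_\e(t)}{\Oe}^2=a_\e(\hat w_\e,\varphi^*)-F(\varphi^*)\leq C(t)\norm{\hat q_\e(t)}{\Oe}$ by the velocity bound just obtained, so $\norm{\hat q_\e(t)}{\Oe}\leq C(t)$. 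Since every majorant $C(t)$ lies in $L^{p_s}(S)$ by Assumption~\ref{ass:Data}, integrating in $t$ yields $(\hat w_\e,\hat q_\e)\in L^{p_s}(S;\HepsN)\times L^{p_s}(S;\Leps)$.

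I expect the genuine difficulty to be entirely in the two uniform-in-$\e$ inputs on which the above rests: the Korn-type inequality for $\hat{e}_{\e,t}$, needed for an $\e$-independent coercivity constant of $a_\e$, and the construction of $\div_\e^{-1}$ with the scaling $\norm{\div_\e^{-1}(q)}{\Oe}+\e\norm{\nabla\div_\e^{-1}(q)}{\Oe}\leq C\norm{q}{\Oe}$, equivalently the uniform inf--sup constant. The necessity of the latter is precisely what distinguishes the present inhomogeneous problem from the classical one: because the interior extension of the boundary data is not divergence-free, velocity and pressure must be estimated together rather than by testing with $\hat w_\e$ alone. Once these two tools are available, the remainder is the careful but routine bookkeeping of $\e$-powers indicated above.
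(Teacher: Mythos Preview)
Your proposal is correct and follows essentially the same route as the paper: cast \eqref{eq:WeakFormTransformed1}--\eqref{eq:WeakFormTransformed2} pointwise in $t$ as a saddle-point problem on $\HepsN\times\Leps$ with the $\e$-weighted norm, obtain uniform coercivity of $a_\e$ from the Korn-type inequality (Lemma~\ref{lemma:KornPsiEps}) plus the $\e$-Poincar\'e inequality, obtain the uniform inf--sup constant for $b_\e$ by testing with $A_\e^{-1}\div_\e^{-1}(q)$ (Lemma~\ref{lemma:dive-1}), and conclude measurability in $t$ from continuity of the solution map in the data. The only difference is in how the bound \eqref{eq:EpsIndependentEstimateVP} is read off: the paper invokes the abstract estimates \eqref{eq:estimateV}--\eqref{eq:estimateP} of Lemma~\ref{lemma:SaddlePoint} directly after bounding the dual norms of the right-hand sides (cf.\ \eqref{eq:Estf1}--\eqref{eq:Estg}), whereas you reproduce these estimates by hand via the lift $\varphi_0=-A_\e^{-1}\div_\e^{-1}(\div(A_\e\hat v_\Ge))$ and the test function $\varphi^*=A_\e^{-1}\div_\e^{-1}(\hat q_\e)$ --- this is just the standard unpacking of the abstract saddle-point bounds and yields the same constants. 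Your appeal to the Piola identity for the continuity of $b_\e$ is a legitimate shortcut; the paper instead carries the term $\div(A_\e)\cdot\varphi$ and controls it through $\e\norm{DA_\e}{C^0}\leq C$ and Poincar\'e (see \eqref{eq:ContinuityBeps}).
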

For the proof of Theorem \ref{thm:ExistenceEps}, we use the following generic saddle-point formulation, where $V$ and $Q$ are Hilbert spaces and $a: V \times V \rightarrow \R$ and $b: V \times Q \rightarrow \R$ continuous bilinear forms:

Given $f \in V'$ and $g \in Q'$, find  a solution 
$(v,p) \in V \times Q$ such that:
\begin{align}\label{eq:InfSupV}
a(v,\varphi) + b(\varphi,p) = \langle f,\varphi \rangle_{V' \times V} \qquad \ \textrm{ for all } \varphi \in V,
\\\label{eq:InfSupP}
b(v,\phi) = \langle g,\phi \rangle_{Q' \times Q} \qquad  \ \textrm{ for all } \phi \in Q.
\end{align}

The existence and uniqueness of a solution and a corresponding estimate for such saddle-point problems are given by the following well-known lemma. A proof is given in \cite[Theorem 4.2.3]{BF10}, for example.
\begin{lemma}\label{lemma:SaddlePoint}
If there exist constants $\alpha,\beta > 0$ such that:
\begin{align}
&a(w,w) \geq \alpha \norm{w}{V}^2 \ \ \textrm{ for all } w \in V,
\\
&\inf\limits_{u \in Q} \sup\limits_{w \in V} \frac{|b(w,u)|} {\norm{w}{V} \norm{u}{Q}} \geq \beta,
\end{align}
then the saddle-point problem \eqref{eq:InfSupV}--\eqref{eq:InfSupP} has a unique solution $(v,p)\in V \times Q$.
Furthermore, the following estimates hold for the solution:
\begin{align}\label{eq:estimateV}
\norm{v}{V} &\leq \frac{1}{\alpha} \norm{f}{V'} + \frac{2\norm{a}{}}{\alpha\beta }\norm{g}{Q'}, 
\\\label{eq:estimateP}
\norm{p}{Q} &\leq\frac{2\norm{a}{}}{\alpha\beta } \norm{f}{V'} +
\frac{2\norm{a}{}^2}{\alpha \beta^2 }\norm{g}{Q'},
\end{align}
where $\norm{a}{} \coloneqq \sup\limits_{v,w \in V} \frac{|a(v,w)|}{\norm{v}{V} \norm{w}{V}}$ and $\norm{b}{} \coloneqq \sup\limits_{v\in V, p \in Q} \frac{|b(v,p)|}{\norm{v}{V} \norm{p}{Q}}$ are the norms on the space of continuous bilinear forms which are given by the continuity constants.	
\end{lemma}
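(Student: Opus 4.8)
The plan is to verify the two structural hypotheses of Lemma~\ref{lemma:SaddlePoint} — coercivity of $a$ on all of $V$ and the inf-sup (Babuška--Brezzi) condition for $b$ — and then to derive the two quantitative estimates \eqref{eq:estimateV}--\eqref{eq:estimateP} by an elementary manipulation. The existence and uniqueness part is the standard Brezzi theory, so I would either cite \cite[Theorem 4.2.3]{BF10} directly or reproduce its short argument; here I sketch the latter so the estimates come out transparently. Introduce the kernel $V_0 \coloneqq \{w \in V \mid b(w,\phi) = 0 \text{ for all } \phi \in Q\}$, which is a closed subspace of $V$. By the inf-sup condition, the operator $B \colon V \to Q'$, $\langle Bw, \phi\rangle = b(w,\phi)$, has closed range, and its restriction to the orthogonal complement $V_0^\perp$ is an isomorphism onto $Q'$ with $\|B^{-1}\|_{Q' \to V_0^\perp} \leq 1/\beta$. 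Hence there is a unique $u_g \in V_0^\perp$ with $b(u_g, \phi) = \langle g, \phi\rangle$ for all $\phi$, and $\|u_g\|_V \leq \frac{1}{\beta}\|g\|_{Q'}$.

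Next I would reduce \eqref{eq:InfSupV}--\eqref{eq:InfSupP} to a problem on the kernel. Writing $v = v_0 + u_g$ with $v_0 \in V_0$ to be found, equation \eqref{eq:InfSupP} is automatically satisfied, and testing \eqref{eq:InfSupV} only with $\varphi \in V_0$ (where $b(\varphi, p) = 0$) gives the coercive variational problem
\begin{equation*}
a(v_0, \varphi) = \langle f, \varphi\rangle - a(u_g, \varphi) \qquad \text{for all } \varphi \in V_0.
\end{equation*}
Since $a$ is coercive on all of $V$, a fortiori on the closed subspace $V_0$, the Lax--Milgram theorem yields a unique $v_0 \in V_0$, with
\begin{equation*}
\|v_0\|_V \leq \frac{1}{\alpha}\bigl(\|f\|_{V'} + \|a\|\,\|u_g\|_V\bigr) \leq \frac{1}{\alpha}\|f\|_{V'} + \frac{\|a\|}{\alpha\beta}\|g\|_{Q'}.
\end{equation*}
Then $\|v\|_V \leq \|v_0\|_V + \|u_g\|_V$; absorbing the constant $1/\beta \le 2\|a\|/(\alpha\beta)$ where needed (or simply bounding crudely) gives \eqref{eq:estimateV}. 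To recover the pressure, note that the functional $\varphi \mapsto \langle f, \varphi\rangle - a(v,\varphi)$ vanishes on $V_0$ by construction, hence factors through $Q'$; the inf-sup condition says $B^\top$ is an isomorphism from $Q$ onto $V_0^{\circ}$ (the annihilator of $V_0$) with norm of the inverse at most $1/\beta$, so there is a unique $p \in Q$ with $b(\varphi, p) = \langle f, \varphi\rangle - a(v,\varphi)$ for all $\varphi \in V$, and $\|p\|_Q \leq \frac{1}{\beta}\bigl(\|f\|_{V'} + \|a\|\,\|v\|_V\bigr)$. Substituting the bound on $\|v\|_V$ and collecting terms produces \eqref{eq:estimateP}.

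The only genuine content beyond bookkeeping is the closed-range / open-mapping step that turns the inf-sup inequality into the solvability of $b(u_g,\cdot) = \langle g,\cdot\rangle$ together with the norm bound $1/\beta$; I would state this as the observation that the inf-sup condition is equivalent to $B^\top$ being bounded below, hence to $B$ being surjective with a bounded right inverse (Banach's closed range theorem). The coercivity-to-Lax--Milgram step and the uniqueness (which follows because a homogeneous solution $v$ lies in $V_0$ and satisfies $a(v,v) = 0$, forcing $v = 0$, and then $b(\varphi,p) = 0$ for all $\varphi$ forces $p = 0$ by inf-sup) are routine. The constants $2\|a\|/(\alpha\beta)$ and $2\|a\|^2/(\alpha\beta^2)$ in the statement are slightly lossy versions of what the argument naturally gives, so at the end I would just remark that the sharper intermediate bounds are dominated by the stated ones (using $\|a\| \geq \alpha$, which holds since $a(w,w) \geq \alpha\|w\|_V^2$ implies $\|a\| \ge \alpha$). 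No step here is a real obstacle; the point is simply to present the Brezzi splitting cleanly, and if space is tight the proof can legitimately be compressed to a citation of \cite[Theorem 4.2.3]{BF10}.
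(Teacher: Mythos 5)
Your proposal is correct: it is the standard Brezzi splitting argument (solve $b(u_g,\cdot)=\langle g,\cdot\rangle$ on $V_0^\perp$ via the inf--sup/closed-range step, apply Lax--Milgram on the kernel $V_0$, recover $p$ from the annihilator of $V_0$), and the constants check out since $\alpha\leq\norm{a}{}$ lets you absorb the $1/\beta$ terms into the stated bounds. This is exactly the argument behind the reference the paper itself cites in lieu of a proof, so you are taking the same route as the paper.
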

The following Lemma enables us to add time as a parameter in Lemma \ref{lemma:SaddlePoint}. More precisely, we use it later to show that $(w_\e,q_\e)$ is measurable with respect to time.
\begin{lemma}\label{lemma:ContinuityWithRespectToData}
For the spaces
\begin{align*}
&A \coloneqq \{a \in \mathrm{Bil}(V,V) \mid a(v,v) \geq \alpha \norm{v}{V}^2 \   \textrm{ for all } v \in V \textrm{ and }  \alpha >0  \},
\\
&B \coloneqq \left\{b \in \mathrm{Bil}(V,Q) \mid  \inf\limits_{p \in Q} \sup\limits_{v \in V} \frac{|b(v,p)|} {\norm{v}{V} \norm{p}{Q}} \geq \beta \textrm{ for } \beta >0 \right\} 
\end{align*}
of bilinear forms with their induced norms, which are defined in Lemma \ref{lemma:SaddlePoint},
the unique solution of the corresponding saddle-point problem \eqref{eq:InfSupV}--\eqref{eq:InfSupP} given by Lemma \ref{lemma:SaddlePoint} depends continuously on the data $(a,b,f,g) \in A \times B \times V' \times Q'$.
\end{lemma}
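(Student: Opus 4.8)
The plan is to establish continuous dependence directly from the explicit solution formulas underlying Lemma \ref{lemma:SaddlePoint}. First I would fix a reference datum $(a_0,b_0,f_0,g_0) \in A \times B \times V' \times Q'$ with solution $(v_0,p_0)$, take a nearby datum $(a,b,f,g)$ with solution $(v,p)$, and write the difference equations: subtracting the two copies of \eqref{eq:InfSupV}--\eqref{eq:InfSupP} gives, for all $\varphi \in V$ and $\phi \in Q$,
\begin{align*}
a(v-v_0,\varphi) + b(\varphi,p-p_0) &= \langle f - f_0,\varphi\rangle - (a-a_0)(v_0,\varphi) - (b-b_0)(\varphi,p_0),\\
b(v-v_0,\phi) &= \langle g-g_0,\phi\rangle - (b-b_0)(v_0,\phi).
\end{align*}
This is again a saddle-point problem of the form \eqref{eq:InfSupV}--\eqref{eq:InfSupP} for the unknown $(v-v_0,p-p_0)$, now with bilinear forms $a$ and $b$ (which still lie in $A$, $B$ with the \emph{same} constants $\alpha,\beta$ as long as $(a,b)$ is close enough to $(a_0,b_0)$ — the coercivity and inf-sup constants are open conditions, since $|a(v,v)-a_0(v,v)| \le \norm{a-a_0}{}\norm{v}{V}^2$ and similarly for $b$) and with right-hand sides $\tilde f \in V'$, $\tilde g \in Q'$ given by the expressions above. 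Their norms are bounded by
\[
\norm{\tilde f}{V'} \le \norm{f-f_0}{V'} + \norm{a-a_0}{}\,\norm{v_0}{V} + \norm{b-b_0}{}\,\norm{p_0}{Q}, \qquad \norm{\tilde g}{Q'} \le \norm{g-g_0}{Q'} + \norm{b-b_0}{}\,\norm{v_0}{V}.
\]

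Second, I would apply the a priori estimates \eqref{eq:estimateV}--\eqref{eq:estimateP} of Lemma \ref{lemma:SaddlePoint} to this difference problem. Using that $\norm{a}{} \le \norm{a_0}{} + \norm{a-a_0}{}$ stays bounded near $(a_0,b_0)$ and that $\alpha,\beta$ can be kept fixed (at, say, half their reference values) on a neighbourhood, one obtains
\[
\norm{v-v_0}{V} + \norm{p-p_0}{Q} \le C_0\big(\norm{\tilde f}{V'} + \norm{\tilde g}{Q'}\big),
\]
with $C_0$ depending only on the reference constants. Combining with the bound on $\norm{\tilde f}{V'}, \norm{\tilde g}{Q'}$ and noting $\norm{v_0}{V}, \norm{p_0}{Q}$ are fixed finite numbers, the right-hand side tends to $0$ as $(a,b,f,g) \to (a_0,b_0,f_0,g_0)$ in $A \times B \times V' \times Q'$. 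This is precisely continuity of the solution map at $(a_0,b_0,f_0,g_0)$, and since the reference point was arbitrary, the map is continuous on all of $A \times B \times V' \times Q'$.

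The only mildly delicate point — and the one I would treat carefully rather than wave through — is that $A$ and $B$ are not linear subspaces but open subsets of $\mathrm{Bil}(V,V)$ and $\mathrm{Bil}(V,Q)$ cut out by the uniform coercivity/inf-sup bounds, so one must verify that a perturbation stays inside with controlled constants; this is exactly the openness argument sketched above. Everything else is a routine application of \eqref{eq:estimateV}--\eqref{eq:estimateP} to the difference problem. (A cleaner alternative is to prove local Lipschitz continuity with an explicit modulus, which the same computation yields for free, but continuity is all that is needed for the measurability-in-time argument that follows.)
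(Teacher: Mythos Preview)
Your argument is correct and is precisely the kind of computation the paper has in mind: the paper's own proof is the single sentence ``Lemma~\ref{lemma:ContinuityWithRespectToData} can be proven by computations which are similar to those in standard proofs of the estimates of Lemma~\ref{lemma:SaddlePoint}.'' Your difference--equation plus \eqref{eq:estimateV}--\eqref{eq:estimateP} argument, together with the openness of the coercivity and inf--sup conditions, is exactly such a standard computation, so there is nothing to correct.
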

\begin{proof}
Lemma \ref{lemma:ContinuityWithRespectToData} can be proven by computations which are similar to those in standard proofs of the estimates  of Lemma \ref{lemma:SaddlePoint}.
\end{proof}
In order to derive the uniform estimate \eqref{eq:EpsIndependentEstimateVP}, we employ \eqref{eq:estimateV} and \eqref{eq:estimateP}. Hence, we equip  $\HepsN$ with a proper norm and  derive a uniform coercivity and a uniform inf--sup estimate for the bilinear forms.

First, we show some uniform estimates for the coefficients (cf.~Lemma \ref{lemma:EstimatesPsi}).
Then, we derive a family of $\e$-scaled Korn-type inequalities for the two-scale transformation method (cf.~Lemma \ref{lemma:KornPsiEps}). These Korn-type inequalities allow to estimate the transformed symmetric gradients $\hat{e}_{\e,t}(\hat{w_\e})$ uniformly from below, which implies the uniform coercivity for the first bilinear form.
In order to show the uniform inf--sup estimate for the other bilinear form, we construct a family of $\e$-scaled operators $\div^{-1}_\e$, which are right inverses to the divergence operator (cf.~Lemma \ref{lemma:dive-1}).

\begin{lemma}\label{lemma:EstimatesPsi}
There exists a constant $C>0$ such that
\begin{align*}
&\norm{J_\e}{L^\infty(S;C(\overline{\Oe}))} + \norm{\Psi_\e}{L^\infty(S;C(\overline{\Oe}))} + \norm{\Psi_\e^{-1}}{L^\infty(S;C(\overline{\Oe}))} \leq C,
\\
&\e\norm{\partial_{x_i}J_\e}{L^\infty(S;C(\overline{\Oe}))}
+
\e\norm{\partial_{x_i}J_\e^{-1}}{L^\infty(S;C(\overline{\Oe}))}
\leq C,
\\
&\e\norm{\partial_{x_i}\Psi_\e}{L^\infty(S;C(\overline{\Oe}))}
+\e\norm{\partial_{x_i}A_\e}{L^\infty(S;C(\overline{\Oe}))} + \e\norm{\partial_{x_i}\Psi_\e^{-1}}{L^\infty(S;C(\overline{\Oe}))} \leq C
\end{align*}
for every $i\in \{0, \dots,N\}$.
\end{lemma}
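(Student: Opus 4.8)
The plan is to derive everything from Assumption~\ref{ass:psi}, parts (2) and (3), combined with elementary matrix calculus. Since $\overline{\Oe} \subset \overline{\Omega}$, restriction only decreases the $C^i$-norms, so it suffices to bound the corresponding $L^\infty(S;C^i(\overline{\Omega}))$-norms. First I would treat the zeroth-order terms: writing $\Psi_\e = I + D\check{\psi}_\e$, Assumption~\ref{ass:psi}(3) with $i=1$ gives a uniform bound on $\norm{D\check{\psi}_\e}{L^\infty(S;C(\overline{\Omega}))}$, hence on $\norm{\Psi_\e}{L^\infty(S;C(\overline{\Omega}))}$. Because $J_\e = \det \Psi_\e$ and $\operatorname{adj}\Psi_\e$ are fixed polynomials in the entries of $\Psi_\e$, both $J_\e$ and $A_\e = J_\e \Psi_\e^{-1} = \operatorname{adj}\Psi_\e$ are then uniformly bounded; and since $\Psi_\e^{-1} = J_\e^{-1}\operatorname{adj}\Psi_\e$ with $J_\e^{-1} \leq c_J^{-1}$ by Assumption~\ref{ass:psi}(2), the same holds for $\Psi_\e^{-1}$ (and for $J_\e^{-1}$ itself, which I will reuse).

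Next I would handle the first-order terms, where the explicit $\e$ in the statement is exactly what is needed. For $i \in \{1,\dots,N\}$, $\partial_{x_i}\Psi_\e = \partial_{x_i} D\check{\psi}_\e$ is a second-order derivative of $\check{\psi}_\e$, so Assumption~\ref{ass:psi}(3) with $i=2$ gives $\e\norm{\partial_{x_i}\Psi_\e}{L^\infty(S;C(\overline{\Omega}))} \leq C$. From Jacobi's formula $\partial_{x_i} J_\e = J_\e \operatorname{tr}(\Psi_\e^{-1}\partial_{x_i}\Psi_\e)$ (or, more pedestrianly, $\partial_{x_i}\det\Psi_\e$ written as a polynomial in the entries of $\Psi_\e$ and of $\partial_{x_i}\Psi_\e$), the zeroth-order bounds give $\e\norm{\partial_{x_i} J_\e}{L^\infty(S;C(\overline{\Omega}))} \leq C$, and then $\partial_{x_i} J_\e^{-1} = -J_\e^{-2}\partial_{x_i} J_\e$ together with $J_\e^{-2} \leq c_J^{-2}$ controls $\e\,\partial_{x_i} J_\e^{-1}$. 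Differentiating $\Psi_\e \Psi_\e^{-1} = I$ gives $\partial_{x_i}\Psi_\e^{-1} = -\Psi_\e^{-1}(\partial_{x_i}\Psi_\e)\Psi_\e^{-1}$, whence $\e\,\partial_{x_i}\Psi_\e^{-1}$ is bounded; and $\partial_{x_i} A_\e = (\partial_{x_i} J_\e)\Psi_\e^{-1} + J_\e \partial_{x_i}\Psi_\e^{-1}$ then yields the bound on $\e\,\partial_{x_i} A_\e$. The index value $i=0$ corresponds to no differentiation and is covered, with room to spare, by the zeroth-order bounds.

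Honestly, there is no real obstacle here; the argument is bookkeeping. The two points that make it work are the uniform lower bound $J_\e \geq c_J$ from Assumption~\ref{ass:psi}(2) --- without which neither $\Psi_\e^{-1}$ nor its $x$-derivative can be bounded uniformly in $\e$ --- and the fact that the powers of $\e$ in Assumption~\ref{ass:psi}(3) are calibrated to the differentiation order, so that each extra $x$-derivative costs exactly one factor $\e^{-1}$, which the explicit $\e$ in the statement absorbs. Finiteness of all the $C$-norms in question is guaranteed by the $C^2$-regularity assumed in Assumption~\ref{ass:psi}(1). One small care point worth flagging is that the final constant $C$ must be chosen after combining all these estimates, and it should be emphasised that it depends only on $c_J$ and on the constant appearing in Assumption~\ref{ass:psi}(3), not on $t \in S$ or $\e$.
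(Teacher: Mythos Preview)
Your proposal is correct and follows essentially the same approach as the paper: both derive everything from $\Psi_\e = I + D\check{\psi}_\e$, the bounds in Assumption~\ref{ass:psi}(3), the lower bound $J_\e \geq c_J$, and elementary product/chain rules for the derived quantities. The only cosmetic difference is the order in which the derivative bounds are obtained --- the paper first bounds $\e\,\partial_{x_i}A_\e$ directly via the minor expansion of the adjugate and then gets $\e\,\partial_{x_i}\Psi_\e^{-1}$ from $\Psi_\e^{-1}=J_\e^{-1}A_\e$, whereas you first bound $\e\,\partial_{x_i}\Psi_\e^{-1}$ by differentiating $\Psi_\e\Psi_\e^{-1}=I$ and then recover $\e\,\partial_{x_i}A_\e$ from $A_\e=J_\e\Psi_\e^{-1}$ --- but the content is the same.
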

\begin{proof}
We note that $\Psi_\e = \1 +D \check{\psi}_\e$. Then the uniform estimate of $D\check{\psi}_\e$ given by Assumption \ref{ass:psi} shows that $\norm{\Psi_\e}{L^\infty(S;C(\overline{\Oe}))}\leq C$.

Since $J_\e$ and the entries of $\Psi_\e$ are polynomials with respect to the entries of $\Psi_\e$ and $J_\e^{-1}$, the uniform bound of $J_\e\geq c_J>0$ from below (cf.~Assumption \ref{ass:psi}) and $\norm{\Psi_\e}{L^\infty(S;C(\overline{\Oe}))}\leq C$ implies $\norm{J_\e}{L^\infty(S;C(\overline{\Oe}))} \leq C$ and $\norm{\Psi_\e^{-1}}{L^\infty(S;C(\overline{\Oe}))}\leq C$.

After rewriting $\partial_{x_i}\Psi_\e = \partial_{x_i}D \psi_\e = \partial_{x_i}(\1 + D \check{\psi}_\e)= \partial_{x_i}D\check{\psi}_\e$, Assumption~\ref{ass:psi} shows  $\e\norm{\partial_{x_i}\Psi_\e}{L^\infty(S;C(\overline{\Oe}))} \leq C$ for every $i\in \{0, \dots,N\}$.

We note that $A_\e$ is the adjugate matrix of $\Psi_\e = \1 + D \psi_\e$. Thus, all of its entries are minors of $\Psi_\e$. We rewrite the $x_i$-derivative of these minors with the product rule into the sum of products, where each product has $(n-2)$-factors which are entries of $\Psi_\e(t)$ and one factor which is a entry of $\partial_{x_i}\Psi_\e$.
Then, the estimates $\norm{\Psi_\e}{L^\infty(S;C(\overline{\Oe}))}\leq C$ and $\e\norm{\partial_{x_i}\Psi_\e}{L^\infty(S;C(\overline{\Oe}))} \leq C$ give $\e\norm{\partial_{x_i}A_\e}{L^\infty(S;C(\overline{\Oe}))} \leq C$.

We obtain $\e\norm{\partial_{x_i}J_\e}{L^\infty(S;C(\overline{\Oe}))} \leq C$ by the same argumentation as for the estimate of $\e\norm{\partial_{x_i}A_\e}{L^\infty(S;C(\overline{\Oe}))}$. 

Using the chain rule, we rewrite $\partial_{x_i}J_\e^{-1} = -J_\e^{-2}\partial_{x_i}J_\e$. Then, the uniform bound $J_\e \geq c_J>0$ from below and the estimate $\e\norm{\partial_{x_i}J_\e}{L^\infty(S;C(\overline{\Oe}))} \leq C$ imply the estimate $\e\norm{\partial_{x_i}J_\e^{-1}}{L^\infty(S;C(\overline{\Oe}))} \leq C$

We rewrite $\Psi_\e^{-1} = J_\e^{-1} A_\e$. Then,  we obtain  $\e\norm{\partial_{x_i}\Psi_\e^{-1}}{L^\infty(S;C(\overline{\Oe}))} \leq C$ with the product rule and the previous estimates.
\end{proof}
\subsection{Korn-type inequality for the two-scale transformation method}
In order to derive the Korn-type inequalities for the two-scale transformation method, we need the following $\e$-scaled Poincar\'e inequality for periodic domains.
\begin{lemma}\label{lemma:PoincareEps}
There exists a constant $C_P \in \R$  such that
\begin{align*}
\norm{v}{\Oe} \leq  \e C_P\norm{\nabla v}{\Oe}
\end{align*}
for every $v \in \HepsN$.
\end{lemma}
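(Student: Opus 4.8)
The plan is to reduce the $\e$-scaled inequality on $\Oe$ to the classical Poincaré inequality on the single pore cell $\Yp$ by a standard scaling/covering argument. First I would recall the unscaled statement: since $\Yp$ is a connected Lipschitz subset of $Y$ whose boundary contains the nonempty, relatively open piece $\Gamma = \partial\Yp \cap \partial\Ys$, the Poincaré inequality on the vanishing-trace space holds, i.e.\ there is a constant $C_P$ (depending only on $\Yp$ and $\Gamma$) with $\norm{u}{\Yp} \le C_P \norm{\nabla u}{\Yp}$ for every $u \in H^1_\Gamma(\Yp)^N$. This is the only "hard" analytic input, and it is genuinely classical; the reason such an inequality holds is that the zero trace on $\Gamma$ rules out the nonzero constants that would otherwise defeat it.

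Next I would transfer this to the scaled cell. Fix $k \in I_\e$ and consider the translated, scaled cell $\e(k/\e) + \e\Yp = k + \e\Yp \subset \Oe$, on which $v$ restricted has zero trace on the corresponding scaled interface $k + \e\Gamma \subset \Ge$ (because $v \in \HepsN$ means $v|_{\Ge} = 0$, and by construction $\Ge$ is the union of these scaled interface pieces). Define $u(y) \coloneqq v(k + \e y)$ for $y \in \Yp$; then $u \in H^1_\Gamma(\Yp)^N$, and the change of variables gives $\norm{v}{k+\e\Yp}^2 = \e^N \norm{u}{\Yp}^2$ and $\norm{\nabla v}{k+\e\Yp}^2 = \e^{N-2}\norm{\nabla u}{\Yp}^2$. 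Applying the unscaled inequality to $u$ yields $\norm{v}{k+\e\Yp} \le \e C_P \norm{\nabla v}{k+\e\Yp}$.

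Finally I would sum over $k \in I_\e$. Since $\Oe = \operatorname{int}\bigl(\bigcup_{k\in I_\e} k + \e\overline{\Yp}\bigr)$ is, up to a null set, the disjoint union of the cells $k + \e\Yp$, we get
\begin{align*}
\norm{v}{\Oe}^2 = \sum_{k\in I_\e} \norm{v}{k+\e\Yp}^2 \le \sum_{k\in I_\e} \e^2 C_P^2 \norm{\nabla v}{k+\e\Yp}^2 = \e^2 C_P^2 \norm{\nabla v}{\Oe}^2,
\end{align*}
and taking square roots gives the claim with the same constant $C_P$. I do not expect any real obstacle here: the only subtlety is bookkeeping with the traces — making sure that for each $k$ the scaled pore $k+\e\Yp$ really does inherit zero boundary data precisely on $k+\e\Gamma$, which follows from periodicity condition (3) on $\Yp$ together with the definition of $\Ge$ as $\partial\Oe \cap \partial\Oes$. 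The constant's $\e$-independence is automatic because it is inherited unchanged from the fixed reference cell.
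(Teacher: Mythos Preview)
Your proposal is correct and follows essentially the same approach as the paper: cover $\Oe$ by the $\e$-scaled cells $k+\e\Yp$, rescale each to the reference pore $\Yp$, apply the Poincar\'e inequality on $H^1_\Gamma(\Yp)^N$, scale back, and sum. The paper only sketches this in one sentence, while you have written out the details carefully, including the trace bookkeeping.
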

\begin{proof}
Lemma \ref{lemma:PoincareEps} is a standard result and can be proven by covering $\Oe$ with $\e$-scaled copies of $\Yp$ and  scaling them on $\Yp$. Then, applying the Poincar\'e inequality for piecewise zero boundary values there and scaling back yields the estimate.
\end{proof}

\begin{lemma}\label{lemma:KornPsiEps}
There exists a constant $\alpha \in \R$ independent of $\e$, such that 
\begin{align}
\alpha \norm{\nabla v}{\Oe}^2 \leq  \norm{\hat{e}_{\e,t}(v)}{\Oe}^2
\end{align}
for $\e >0 $ and all $v \in \HepsN$.
\end{lemma}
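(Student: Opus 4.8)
The plan is to reduce the weighted (transformed) symmetric gradient to the classical symmetric gradient via the uniform bounds on $\Psi_\e$, and then to apply the standard $\e$-scaled Korn inequality on the periodic perforated domain $\Oe$. Concretely, I would first observe that $\hat e_{\e,t}(v)$ is the symmetrization of $\Psi_\e^{-\top}(t)\nabla v$. Write $\Psi_\e^{-\top} = \1 + R_\e$, where by Lemma \ref{lemma:EstimatesPsi} (applied to $\Psi_\e^{-1}$) one has $\norm{R_\e}{L^\infty(S;C(\overline{\Oe}))}\le C$ — but more importantly I need the \emph{smallness} of the non-identity part, which is \emph{not} uniformly small in general. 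So the naive perturbation argument fails, and the honest route is different: one must use the $\e$-scaled Korn inequality for functions in $\HepsN$ together with the Poincaré inequality of Lemma \ref{lemma:PoincareEps}, exactly as in the rigid-domain homogenisation literature, and then absorb the coefficients $\Psi_\e$.

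The key steps I would carry out are the following. (1) Establish the \textbf{$\e$-scaled Korn inequality on $\Oe$}: there is $C_K$ independent of $\e$ with $\norm{\nabla v}{\Oe}\le C_K\norm{e(v)}{\Oe}$ for all $v\in\HepsN$, where $e(v)=(\nabla v+\nabla v^\top)/2$ is the \emph{untransformed} symmetric gradient. This follows by covering $\Oe$ with $\e$-cells $k+\e\overline{\Yp}$, rescaling each to $\Yp$, applying the classical Korn inequality on $\Yp$ for $H^1_\Gamma(\Yp)$ functions (valid since $\Gamma$ has positive measure, so the only infinitesimal rigid motion vanishing on $\Gamma$ is zero, and the estimate has a scale-invariant homogeneous form $\norm{\nabla u}{\Yp}\le C\norm{e(u)}{\Yp}$ for $u|_\Gamma=0$), and summing over cells; crucially, because the boundary values vanish on each $\Ge$-piece, there is no lower-order term and no interaction between cells, so the constant is the single-cell constant. (2) \textbf{Pointwise comparison of the quadratic forms.} For a.e.\ $t$ and $x$, and any matrix $\xi=\nabla v(x)$, compare $|\mathrm{sym}(\Psi_\e^{-\top}\xi)|^2$ with $|\mathrm{sym}(\xi)|^2 = |e(v)|^2$. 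This is the delicate point (see below). (3) Combine: $\alpha\norm{\nabla v}{\Oe}^2 \le \alpha C_K^2\norm{e(v)}{\Oe}^2$, and if step (2) gives $\norm{e(v)}{\Oe}^2\le C'\norm{\hat e_{\e,t}(v)}{\Oe}^2$ we are done with $\alpha = 1/(C_K^2 C')$.

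The main obstacle is step (2): the map $\xi\mapsto \mathrm{sym}(\Psi_\e^{-\top}\xi)$ need not control $\mathrm{sym}(\xi)$ pointwise, because symmetrization after multiplication by a non-symmetric invertible matrix can shrink the symmetric part (e.g.\ it can annihilate a nonzero symmetric $\xi$). The way around this, which I expect the authors use, is to \emph{not} pass through the untransformed $e(v)$ at all, but rather to apply the Korn inequality directly to the vector field after a change of variables, or to exploit that $\Psi_\e^{-\top}\nabla v$ is itself (close to) a gradient: indeed $\Psi_\e^{-\top}\nabla v = \nabla(v\circ\psi_\e^{-1})\circ\psi_\e$ up to commutator terms of order $\e$ (using $\e\norm{\partial_{x_i}\Psi_\e^{-1}}{L^\infty}\le C$ from Lemma \ref{lemma:EstimatesPsi} and the $\e$-Poincaré inequality to absorb them). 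So the refined plan is: (2$'$) write $\Psi_\e^{-\top}(t)\nabla v = \nabla\big(v\circ\psi_\e^{-1}(t)\big)\circ\psi_\e(t) + E_\e$ with $\norm{E_\e}{\Oe}\le C\e\norm{\nabla v}{\Oe}$; (2$''$) apply the untransformed $\e$-scaled Korn inequality to $\tilde v := v\circ\psi_\e^{-1}(t)$ on $\Oe(t)=\psi_\e(t,\Oe)$ (whose cells are $C^2$-diffeomorphic images of $\e$-cells with uniformly controlled geometry by Assumption \ref{ass:psi}, so Korn holds with an $\e$-independent constant), then change variables back using $c_J\le J_\e$ and the $L^\infty$ bounds on $\Psi_\e,\Psi_\e^{-1}$; (2$'''$) absorb $E_\e$ and the commutator on the left using Lemma \ref{lemma:PoincareEps} by choosing $\e$ small — or, since we only claim an $\e$-independent $\alpha$ for the fixed sequence $(\e_n)$, handle finitely many large $\e_n$ separately (for each fixed $\e$ the inequality is just the classical Korn inequality on the Lipschitz domain $\Oe$ with $\hat e_{\e,t}$, nondegenerate by Assumption \ref{ass:psi}(2)). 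This last absorption/truncation trick is the technical heart; everything else is bookkeeping with the estimates already collected in Lemma \ref{lemma:EstimatesPsi}.
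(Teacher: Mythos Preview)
Your proposal ultimately rests on step~(2$''$): that the classical (untransformed) Korn inequality holds on the deformed perforated domain $\Oe(t)$ with an $\e$- and $t$-independent constant. This is the decisive step, and it is essentially equivalent to the lemma itself. Indeed, the chain rule gives the \emph{exact} identity $\Psi_\e^{-\top}(t)\nabla v = \bigl(\nabla(v\circ\psi_\e^{-1}(t))\bigr)\circ\psi_\e(t)$, so there is no commutator $E_\e$; consequently $\hat e_{\e,t}(v)=e(\tilde v)\circ\psi_\e(t)$ pointwise, and after changing variables with $c_J\le J_\e\le C$ the inequality $\alpha\norm{\nabla\tilde v}{\Oe(t)}^2\le\norm{e(\tilde v)}{\Oe(t)}^2$ is \emph{the same statement} as $\alpha'\norm{\nabla v}{\Oe}^2\le\norm{\hat e_{\e,t}(v)}{\Oe}^2$. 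Your justification (``uniformly controlled geometry, so Korn holds with an $\e$-independent constant'') is exactly what needs to be proven; proving it forces you back to a Korn inequality with matrix coefficients on the fixed reference cell. The $\e$-small/$\e$-large dichotomy in~(2$'''$) is therefore moot (there is nothing to absorb), and for each fixed $\e$ you would still need uniformity in $t\in S$, which your single-domain Korn does not give.

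The paper's argument supplies precisely the missing idea and is structurally different. It stays on the reference domain, localises to cells $k+\e\Yp$ and rescales to $\Yp$, obtaining $\norm{M\nabla u+(M\nabla u)^\top}{\Yp}$ with $M(y)=\Psi_\e^{-\top}(t,k+\e y)$. By Lemma~\ref{lemma:EstimatesPsi} and Assumption~\ref{ass:psi} these $M$ lie in an equi-Lipschitz family of $C(\overline{\Yp})$-matrices with determinant bounded below, hence by Arzel\`a--Ascoli a \emph{precompact} set $\mathcal{M}\subset C(\overline{\Yp})^{N\times N}$. The paper then invokes a Korn inequality with continuous matrix coefficients on a fixed Lipschitz domain (Lemma~\ref{lemma:KornCoeff}, due to Pompe) to get a constant $\alpha_M$ for each $M$, and upgrades this to a constant uniform over $\mathcal{M}$ by a finite-cover compactness argument (Lemma~\ref{lemma:KornUniformCoeff}). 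This compactness-of-coefficients mechanism is what replaces your unjustified uniform Korn on the moving domains, and it is the genuine content of the proof.
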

\begin{proof}
Lemma \ref{lemma:KornPsiEps} follows directly from Lemma \ref{lemma:KornEpsCoeff}, since the estimates of Lemma \ref{lemma:EstimatesPsi} shows that the prerequisites of Lemma \ref{lemma:KornEpsCoeff} are fulfilled.
\end{proof}

\begin{lemma}\label{lemma:KornEpsCoeff}
Let $c,C >0$. Then, there exists an $\e$-independent constant $\alpha>0$ such that 
\begin{align*}
\alpha \norm{\nabla v}{\Oe}^2 \leq \norm{M_\e \nabla v + \left(M_\e \nabla v\right)^\top }{\Oe}^2
\end{align*}
for any $v \in H^1_{\Gamma_\e}(\Oe)^N$
and
for every
$M_\e \in C^{0,1}(\overline{\Oe}) $ with
\begin{align*}
&\norm{M_\e}{C(\overline{\Oe})} + \e \norm{M_\e}{C^{0,1}(\overline{\Oe})}\leq C,
\\
&\det(M_\e(x)) \geq c >0 \textrm{ for every } x \in \overline{\Oe}.
\end{align*}
\end{lemma}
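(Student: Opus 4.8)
I would prove Lemma~\ref{lemma:KornEpsCoeff} by localising it to a single copy of the reference cell $\Yp$, proving there a Korn inequality with a lower-order remainder \emph{uniformly in the coefficient}, and then removing the remainder by a compactness argument whose only non-routine ingredient is a rigidity (unique-continuation) statement for the ``distorted'' symmetric gradient $\operatorname{sym}(\tilde M\nabla\,\cdot):=\tfrac12\big(\tilde M\nabla\,\cdot+(\tilde M\nabla\,\cdot)^\top\big)$. For the localisation: since $\Oe=\operatorname{int}\big(\bigcup_{k\in I_\e}(k+\e\overline{\Yp})\big)$ and the cells overlap only in null sets, both sides of the asserted inequality split into sums over $k\in I_\e$. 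On $k+\e\Yp$ I rescale by $y=(x-k)/\e$ and set $\tilde v(y):=v(k+\e y)$, $\tilde M(y):=M_\e(k+\e y)$; then $\nabla_y\tilde v=\e\,\nabla_xv$, so each of the two integrals over $k+\e\Yp$ equals $\e^{N-2}$ times the corresponding integral over $\Yp$, and these prefactors cancel. Moreover $\tilde v\in H^1_\Gamma(\Yp)^N$, the bound $\det\tilde M\ge c$ is unchanged, and the hypothesis $\e\,\|M_\e\|_{C^{0,1}(\overline\Oe)}\le C$ becomes precisely a uniform bound $\|\tilde M\|_{C^{0,1}(\overline\Yp)}\le C'$. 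Hence it suffices to produce $\alpha>0$, depending only on $c$, $C'$ and $\Yp$, with $\alpha\|\nabla\tilde v\|_\Yp^2\le\|\operatorname{sym}(\tilde M\nabla\tilde v)\|_\Yp^2$ for every $\tilde v\in H^1_\Gamma(\Yp)^N$ and every $\tilde M$ in the class $\mathcal M:=\{M\in C^{0,1}(\overline\Yp)^{N\times N}:\|M\|_{C^{0,1}}\le C',\ \det M\ge c\}$; summing over $k$ then finishes the proof.

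For a constant invertible $M_0$ and $w\in H^1_0(D)^N$ on a bounded domain $D$, the substitution $u(z):=w(M_0^{-\top}z)$ turns $\operatorname{sym}(M_0\nabla w)$ into $M_0\,e(u)(M_0^\top\,\cdot)\,M_0^\top$, so the classical Korn inequality $\|e(u)\|^2\ge\tfrac12\|\nabla u\|^2$ on $H^1_0$ gives $\|\operatorname{sym}(M_0\nabla w)\|_D^2\ge\kappa(M_0)\|\nabla w\|_D^2$ with $\kappa(M_0)$ bounded below in terms of the singular values of $M_0$ alone; for $M_0\in\mathcal M$ these stay in a fixed compact subinterval of $(0,\infty)$, so $\kappa(M_0)\ge\kappa_0>0$ uniformly. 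Taking a cover of $\overline\Yp$ by finitely many balls of a small radius $\rho$ with a subordinate partition of unity $\sum\eta_i^2=1$, freezing $\tilde M$ at the centre of each ball (the $O(\rho)$ error being absorbed for $\rho$ small, which is possible because $\kappa_0$ is uniform), treating the interior balls with the displayed constant-coefficient estimate applied to $\eta_i\tilde v\in H^1_0(B_i)^N$ and the boundary balls by bi-Lipschitz flattening of the $C^1$ interface $\Gamma$ together with Korn's second inequality (its constants controlled by the Lipschitz character of $\Yp$), and absorbing the lower-order commutators $\nabla\eta_i\otimes\tilde v$, I obtain
\[
\|\nabla\tilde v\|_\Yp^2\ \le\ C_1\big(\|\operatorname{sym}(\tilde M\nabla\tilde v)\|_\Yp^2+\|\tilde v\|_{L^2(\Yp)}^2\big),\qquad \tilde v\in H^1_\Gamma(\Yp)^N,\ \tilde M\in\mathcal M,
\]
with $C_1$ uniform over $\mathcal M$.

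To pass from this to the clean inequality I argue by contradiction: if it failed uniformly there would be $\tilde M_j\in\mathcal M$ and $\tilde v_j\in H^1_\Gamma(\Yp)^N$ with $\|\nabla\tilde v_j\|_\Yp=1$ and $\|\operatorname{sym}(\tilde M_j\nabla\tilde v_j)\|_\Yp\to0$. By the Poincar\'e inequality for $H^1_\Gamma(\Yp)$ (zero trace on $\Gamma$, a set of positive surface measure) the $\tilde v_j$ are bounded in $H^1$; after a subsequence $\tilde v_j\rightharpoonup\tilde v$ in $H^1$, $\tilde v_j\to\tilde v$ in $L^2$, and $\tilde M_j\to\tilde M$ in $C(\overline\Yp)$ by Arzel\`a--Ascoli, with $\tilde M\in\mathcal M$ (the $C^{0,1}$ bound and $\det\ge c$ pass to the limit). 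A weak--strong limit gives $\operatorname{sym}(\tilde M\nabla\tilde v)=0$, and then the remainder inequality applied to $\tilde v_j$ forces $1=\|\nabla\tilde v_j\|^2\le C_1(\|\operatorname{sym}(\tilde M_j\nabla\tilde v_j)\|^2+\|\tilde v_j\|_{L^2}^2)\to C_1\|\tilde v\|_{L^2}^2$. Thus the whole argument reduces to the rigidity claim: if $\tilde M\in\mathcal M$, $\tilde v\in H^1_\Gamma(\Yp)^N$ and $\operatorname{sym}(\tilde M\nabla\tilde v)=0$, then $\tilde v=0$ (which gives the contradiction $1\le0$).

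It is this rigidity step that I expect to be the real obstacle; I would prove it as follows. Put $A:=\tilde M\nabla\tilde v$; it is antisymmetric-valued, so $\operatorname{rank}A(y)=\operatorname{rank}\nabla\tilde v(y)$ is \emph{even} at every point. Requiring $\nabla\tilde v=\tilde M^{-1}A$ to be curl-free and using $\det\tilde M\ge c$, a direct computation shows that the first prolongation of the system closes, $\nabla A=L(y)A$ for some $L\in L^\infty(\Yp)$ assembled from $\tilde M$ and $D\tilde M$ (the distorted-rigidity operator has injective principal symbol and is therefore of finite type). Since $\Yp$ is a bounded connected Lipschitz set its intrinsic and Euclidean distances are comparable, so a Gr\"onwall estimate along paths shows $A\in C^{0,1}(\overline\Yp)$ and that $A$ is either identically zero or nowhere zero on $\Yp$. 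In the nowhere-zero case, pick a point $y^\ast$ of the $C^1$ interface $\Gamma$ at which $\Gamma$ has a tangent plane; $\tilde v|_\Gamma=0$ forces the tangential part of $\nabla\tilde v(y^\ast)$ to vanish, so $\operatorname{rank}\nabla\tilde v(y^\ast)\le1$, hence by parity $\nabla\tilde v(y^\ast)=0$ and $A(y^\ast)=0$, a contradiction. Therefore $A\equiv0$, so $\nabla\tilde v\equiv0$ and $\tilde v\equiv0$ by the boundary condition. The genuinely delicate points here are verifying that the prolonged system closes in general dimension and extracting the rank--parity obstruction at $\Gamma$; the localisation and the Korn-with-remainder/compactness machinery of the preceding steps are standard.
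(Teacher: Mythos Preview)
Your localisation and rescaling to the reference cell $\Yp$ is exactly what the paper does. From there the two routes diverge. The paper does not prove a Korn-with-remainder and then remove the remainder; instead it observes (via Arzel\`a--Ascoli) that the rescaled coefficients lie in a compact subset $\overline{\mathcal M}\subset C(\overline{\Yp})^{N\times N}$, invokes a known result of Pompe (the paper's Lemma~\ref{lemma:KornCoeff}, cited from \cite{Pom03}) giving the clean inequality $\alpha_M\|\nabla v\|_{\Yp}\le\|M\nabla v+(M\nabla v)^\top\|_{\Yp}$ for each \emph{fixed} continuous $M$ with $\det M\ge c$, and then upgrades this to a uniform constant over $\overline{\mathcal M}$ by a short finite-covering argument (Lemma~\ref{lemma:KornUniformCoeff}): a perturbation estimate shows the inequality with constant $\alpha_M/2$ persists on a $C^0$-ball around $M$, and compactness yields a finite subcover.

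Your approach is correct and more self-contained---you effectively reprove Pompe's result inside your rigidity step---but considerably longer. One point to tighten: the ``Gr\"onwall along paths'' claim that $A$ is either identically zero or nowhere zero implicitly uses continuity of $A$, which is not immediate from $A\in H^1$ in dimension $N\ge 2$; it does follow from the differential inequality $|\nabla A|\le C_0|A|$ (e.g.\ by noting that $\log(|A|^2+\delta)$ is Lipschitz uniformly in $\delta$, which yields a Harnack-type bound and hence the zero/nonzero dichotomy together with continuous extension to $\overline{\Yp}$), but this deserves a sentence. The paper's route buys brevity by outsourcing precisely this rigidity to the literature; yours buys independence from that citation at the price of the freezing-of-coefficients machinery and the ODE argument.
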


\begin{proof}
Since $\norm{M_\e \nabla v + (M_\e \nabla v)^\top}{\Oe}^2 = \sum\limits_{k \in I_\e} \norm{M_\e \nabla v + (M_\e \nabla v)^\top}{k + \e \Yp}^2$ and $\norm{\nabla v}{\Oe}^2 = \sum\limits_{k \in I_\e} \norm{\nabla v}{k + \e \Yp}^2$, we can reduce the problem on the reference cell.
After transforming $k + \e \Yp$ on $\Yp$ it is sufficient to show 
\begin{align}
\alpha \norm{v}{\Yp}^2 \leq \norm{ M \nabla v + \left(M \nabla v\right)^\top   }{\Yp}^2
\end{align}
for any $v \in H^1_{\Gamma}(\Yp)^N$, $\e >0$ and $k \in I_\e$ where $M(x) \coloneqq M_\e(k + \e x)$.
From the Lipschitz continuities of $M_\e$ and the transformation $x \mapsto k + \e x$, we can conclude that $\norm{M}{C^{0,1}(\overline{\Yp})} \leq C$.
The uniform bound of the determinant from below remains preserved under the transformation.
Hence $M \in \mathcal{M} \coloneqq \{M \in  C^{0,1}(\overline{\Yp})^{N \times N} \mid  \norm{M}{C^{0,1}(\overline{\Yp})} \leq C  \textrm{ and }  \det(M) \geq c \}$.
	
The uniform Lipschitz continuity of $\mathcal{M}$ implies the equicontinuity of $\mathcal{M}$ and since $\mathcal{M}$ is also pointwise bounded, we obtain by the theorem of Arzel\`a--Ascoli that $\mathcal{M}$ is relatively compact in $C(\overline{\Yp})^{N \times N}$. 
Then, we apply Lemma \ref{lemma:KornUniformCoeff} on the closure of $\mathcal{M}$ in $C(\overline{\Yp})^{N \times N}$ and we obtain Lemma \eqref{lemma:KornEpsCoeff}.	
\end{proof}

\begin{lemma}\label{lemma:KornUniformCoeff}
Let $U \subset \R^N$ be an open connected Lipschitz domain and $G \subset \partial U$ open with $|G| >0$.
Let $\mathcal{M}$ be a compact subset of $C(\overline{U})^{N \times N}$ and assume there exists $c>0$ such that $\det(M) \geq c$ for every $M \in \mathcal{M}$.
Then, there exists $\alpha >0$ such that 
\begin{align*}
\alpha \norm{\nabla v}{U} \leq \norm{(M \nabla v +  (M\nabla v)^\top}{U}
\end{align*}
for every $M \in \mathcal{M}$ and $v \in H^1_G(U)$.
\end{lemma}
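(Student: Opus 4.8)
The plan is to prove this Korn-type inequality with a coefficient matrix by a compactness–contradiction argument, exploiting that $\mathcal{M}$ is already assumed compact in $C(\overline U)^{N\times N}$ with a uniform lower bound on the determinant. First I would argue that it suffices to prove the \emph{non-uniform} statement: for each fixed $M\in C(\overline U)^{N\times N}$ with $\det M\ge c$ on $\overline U$, there is $\alpha_M>0$ with $\alpha_M\norm{\nabla v}{U}\le\norm{M\nabla v+(M\nabla v)^\top}{U}$ for all $v\in H^1_G(U)$. Granting this, suppose the uniform constant fails; then there are $M_k\in\mathcal M$ and $v_k\in H^1_G(U)$ with $\norm{\nabla v_k}{U}=1$ and $\norm{M_k\nabla v_k+(M_k\nabla v_k)^\top}{U}\to 0$. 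By compactness of $\mathcal M$, pass to a subsequence with $M_k\to M_\infty$ uniformly, where still $\det M_\infty\ge c$. Since $v_k$ is bounded in $H^1_G(U)$ (using the ordinary Poincaré inequality on the connected Lipschitz domain $U$ with $|G|>0$ to control $\norm{v_k}{U}$ by $\norm{\nabla v_k}{U}$), extract $v_k\rightharpoonup v_\infty$ weakly in $H^1$, strongly in $L^2$, with $v_\infty\in H^1_G(U)$. The product $M_k\nabla v_k\rightharpoonup M_\infty\nabla v_\infty$ weakly in $L^2$ (uniform convergence of $M_k$ times weak convergence of $\nabla v_k$), so by weak lower semicontinuity $\norm{M_\infty\nabla v_\infty+(M_\infty\nabla v_\infty)^\top}{U}=0$, i.e.\ $M_\infty\nabla v_\infty$ is pointwise antisymmetric a.e.

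The heart of the matter is then the \emph{rigidity} step: if $M\in C(\overline U)$ with $\det M\ge c>0$ and $v\in H^1(U)$ satisfies $M\nabla v+(M\nabla v)^\top=0$ a.e., then $\nabla v\equiv 0$, so $v$ is constant, and with $v\in H^1_G(U)$ and $|G|>0$ this forces $v=0$ — contradicting (via strong $L^2$ convergence and the Poincaré inequality) that $\norm{\nabla v_k}{U}=1$ would pass to a nonzero limit. To see the rigidity, note that $A(x):=M(x)\nabla v(x)$ is antisymmetric a.e., hence $\nabla v=M^{-1}A$ with $M^{-1}$ bounded (since $\det M\ge c$ and $M$ is continuous on a compact set). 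The matrix field $\nabla v$ must also satisfy the compatibility (curl-free) conditions $\partial_j(\partial_i v_\ell)=\partial_i(\partial_j v_\ell)$ in the distributional sense. Writing $\nabla v=M^{-1}A$ and using that $A$ is antisymmetric gives an overdetermined first-order relation; the cleanest route is to observe that $2e(v)=\nabla v+\nabla v^\top=(M^{-1}-M^{-\top})A+\dots$ — better, argue directly that $\nabla v+(\nabla v)^\top = M^{-1}(M\nabla v+(M\nabla v)^\top)M^{-\top} + (\text{skew terms involving } M^{-1}\nabla v^\top - \nabla v^\top M^{-\top})$; since the antisymmetry of $M\nabla v$ is equivalent to $(M-M^\top)$-type structure, one deduces $e(v)$ is itself forced to vanish, whence classical Korn (the second Korn inequality on $U$, valid since $U$ is a connected Lipschitz domain) gives $\nabla v=0$.

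The main obstacle I anticipate is precisely this rigidity/unique-continuation claim: from "$M\nabla v$ is pointwise antisymmetric with $M$ merely continuous and nondegenerate" one must extract "$\nabla v=0$". A pointwise-invertible continuous change of frame does \emph{not} in general send symmetric-gradient fields to symmetric-gradient fields, so one cannot just conjugate classical Korn; the argument has to use the integrability/compatibility of $\nabla v$ as a genuine gradient. I would handle it by the following reduction: set $B=M^{-1}$, bounded and continuous; then $\nabla v = B A$ with $A$ antisymmetric-valued in $L^2$. Symmetrising, $2e(v)=BA+A^\top B^\top = BA - AB^\top = BA-A B^\top$. If additionally $M$ were symmetric this reads $2e(v)=BA-AB$, a commutator of the fixed field $B$ with the unknown $A$; testing against $e(v)$ itself and integrating by parts transfers derivatives onto $B$, and — crucially — one uses that $\int_U e(v):e(v)=0$ \emph{only after} having shown $e(v)=0$, which is circular unless one first linearises. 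The honest fix, and the one I would write up, is: do not try to prove rigidity for general $M$ directly; instead, in the contradiction argument above, the quantity $\norm{M_k\nabla v_k+(M_k\nabla v_k)^\top}{U}\to0$ combined with $M_k\to M_\infty$ \emph{uniformly} and $\nabla v_k\rightharpoonup\nabla v_\infty$ already yields $M_\infty\nabla v_\infty$ antisymmetric; then apply the \emph{already-established non-uniform} Lemma~\ref{lemma:KornUniformCoeff} for the single matrix $M_\infty$ — which we are allowed to assume holds pointwise in $M$ from a standard reference on Korn inequalities with variable coefficients — to conclude $\nabla v_\infty=0$, then $v_\infty=0$, contradicting $\norm{\nabla v_k}{U}=1\Rightarrow\norm{v_k}{U}\ge c'>0$ passing to the strong $L^2$ limit. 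In other words, the compactness argument bootstraps the uniform constant out of the family of individual constants, and the only genuinely analytic input is the classical variable-coefficient Korn inequality for a single continuous nondegenerate $M$, which is where I'd cite the literature rather than reprove it.
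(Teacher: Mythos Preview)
Your overall plan --- cite the single-matrix Korn inequality (the paper does the same, quoting \cite{Pom03}) and then use compactness of $\mathcal M$ to upgrade to a uniform constant --- is the right one, and it is also what the paper does. But the contradiction argument as you wrote it does not close.

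The gap is in the final step. From $\norm{\nabla v_k}{U}=1$ you claim $\norm{v_k}{U}\ge c'>0$; this is false --- the Poincar\'e inequality goes the other way, it bounds $\norm{v_k}{U}$ \emph{above} by $\norm{\nabla v_k}{U}$, never below. Consequently, knowing only $v_k\rightharpoonup v_\infty$ weakly in $H^1$, $v_k\to v_\infty$ strongly in $L^2$, and $v_\infty=0$ gives no contradiction whatsoever to $\norm{\nabla v_k}{U}=1$: weak $H^1$-convergence does not preserve the norm of the gradient, and nothing in your argument provides strong $H^1$-compactness of $(v_k)$. The long digression on ``rigidity'' (deducing $\nabla v=0$ from $M\nabla v$ antisymmetric) is therefore beside the point --- even if it worked, it only tells you about $v_\infty$, which is not enough.

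The fix is to apply the single-matrix inequality to $v_k$, not to $v_\infty$. Since $M_k\to M_\infty$ uniformly and $\norm{\nabla v_k}{U}=1$, you have
\[
\norm{M_\infty\nabla v_k+(M_\infty\nabla v_k)^\top}{U}
\le \norm{M_k\nabla v_k+(M_k\nabla v_k)^\top}{U}+2\norm{M_k-M_\infty}{C(\overline U)}\norm{\nabla v_k}{U}\to 0,
\]
and now the single-matrix Korn inequality for $M_\infty$ applied to $v_k$ gives $\alpha_{M_\infty}\le\alpha_{M_\infty}\norm{\nabla v_k}{U}\to 0$, the desired contradiction. Unwinding this contradiction, you see it is exactly the stability estimate the paper writes down directly: for $B$ in the ball $B_{\alpha_M/4}(M)$ one has $\tfrac{1}{2}\alpha_M\norm{\nabla v}{U}\le\norm{B\nabla v+(B\nabla v)^\top}{U}$, after which compactness of $\mathcal M$ yields a finite subcover and $\alpha=\min_{M\in\mathcal I}\alpha_M/2$. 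The paper's direct covering argument is shorter and avoids the weak-limit machinery entirely.
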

\begin{proof}
Let $M \in \mathcal{M}$, then Lemma \ref{lemma:KornCoeff} gives a constant $\alpha_M$ such that
\begin{align}\label{eq:KornA}
\alpha_M\norm{\nabla v}{U} \leq  \norm{M \nabla v + (M\nabla v)^\top}{U}
\end{align}	for every $v \in H^1_G(U)$.
We obtain for $B \in \mathcal{M}$
\begin{align*}
\norm{M \nabla v + (M\nabla v )^\top  - (B \nabla v + (B \nabla v)^\top )}{U}
\leq 2 \norm{M-B}{C(\overline{U})} \norm{\nabla v}{U},
\end{align*}
which implies
\begin{align}\label{eq:EstAB}
\norm{M \nabla v + (M\nabla v)^\top}{U} \leq \norm{B \nabla v + (B\nabla v )^\top}{U} +  2\norm{M-B}{C(\overline{U})} \norm{\nabla v}{U}.
\end{align}
Combining \eqref{eq:KornA} and \eqref{eq:EstAB} gives for any $B \in B_{\alpha_M/4}(M)$
\begin{align}\label{eq:EstSymB}
\frac{1}{2}\alpha_M\norm{\nabla v}{U} \leq \norm{B \nabla v + (B\nabla v )^\top}{U}.
\end{align}
Then, we cover $\mathcal{M}$ by $\bigcup\limits_{M \in \mathcal{M}} B_{\alpha_M/4}(M)$ and since $\mathcal{M}$ is compact, there exists a finite set $\mathcal{I}$  such that for every $B \in \mathcal{M}$ there exists $M\in \mathcal{I}$ with $B \in B_{\alpha_M/4}(M)$.
We choose $\alpha = \min\limits_{M \in \mathcal{I}} \alpha_M/2$ and obtain from \eqref{eq:EstSymB}
\begin{align*}
\alpha \norm{\nabla v}{U} \leq \frac{\alpha_M}{2} \norm{\nabla v}{U} \leq \norm{(B \nabla v + (B\nabla v)^\top}{U}
\end{align*}
for every $B \in \mathcal{M}$ and $v \in H^1_G(U)$.
\end{proof}

\begin{lemma}\label{lemma:KornCoeff}
Let $U \subset \R^N$ be an open connected Lipschitz domain and $G \subset \partial U$ open with $|G| >0$.
Let	$A : \overline{U} \rightarrow \R^{n \times n}$ be a continuous mapping with $\det(A) \geq c >0$. Then, there is a constant $\alpha >0$ such that 
\begin{align*}
\alpha \norm{\nabla v}{U} \leq \norm{(A \nabla v + (A\nabla v)^\top}{U}
\end{align*}	
for all $v \in H^1_G(U)$.
\end{lemma}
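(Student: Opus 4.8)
The strategy is to reduce the constant-coefficient case to the classical Korn inequality, then to pass to continuous variable $A$ by a localisation argument producing a Korn inequality with a compact remainder, and finally to discard that remainder on $H^1_G(U)$ by a compactness argument.

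For a \emph{constant} invertible matrix $M_0$ one has $M_0\nabla v=\nabla(M_0v)$, hence $M_0\nabla v+(M_0\nabla v)^\top=2e(M_0v)$. Since $v\mapsto M_0v$ is an automorphism of $H^1_G(U)^N$, applying the classical Korn inequality on $H^1_G(U)^N$ — valid because $U$ is connected Lipschitz and $|G|>0$ — to $M_0v$ and combining with $\norm{\nabla v}{U}\le\norm{M_0^{-1}}{}\norm{\nabla(M_0v)}{U}$ would prove the lemma for $A\equiv M_0$, with a constant depending only on $\norm{M_0^{-1}}{}$ and on $(U,G)$.

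For a general continuous $A$ with $\det A\ge c$, Cramer's rule gives a uniform bound $\norm{A^{-1}}{C(\overline U)}\le c'$, and $A$ is uniformly continuous on the compact set $\overline U$. I would cover $\overline U$ by finitely many balls $O_1,\dots,O_m$ with centres $x_k$ on which $\norm{A-A_k}{C(\overline{O_k})}\le\eta$, where $A_k\coloneqq A(x_k)$ and $\eta>0$ is a small constant to be fixed, take a subordinate partition of unity $\chi_k\in C^\infty_c(O_k)$ with $\sum_k\chi_k\equiv1$ on $\overline U$, and estimate $\norm{\nabla(\chi_kv)}{U}$ by applying the classical Korn inequality \emph{with a compact remainder}, $\norm{\nabla w}{U}^2\le C_K(\norm{e(w)}{U}^2+\norm{w}{U}^2)$ for $w\in H^1(U)^N$, to $w=A_k\chi_kv$ — a remainder being unavoidable here because the balls meeting $\partial U\setminus G$ carry no boundary condition. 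Using $A_k\nabla(\chi_kv)=\nabla(A_k\chi_kv)$, the product rule $\nabla(\chi_kv)=\chi_k\nabla v+v\otimes\nabla\chi_k$, the bound $\norm{A-A_k}{C(\overline{O_k})}\le\eta$ to replace $A_k$ by $A$ inside the symmetric gradient up to an error $2\eta\norm{\nabla(\chi_kv)}{U}$ which is absorbed on the left once $\eta$ is chosen small, and summing over $k$ (so that $\nabla v=\sum_k\nabla(\chi_kv)$), this would give a variable-coefficient Korn inequality with compact remainder
\begin{equation}\label{eq:KornWithRemainder}
\norm{\nabla v}{U}^2\le C\left(\norm{A\nabla v+(A\nabla v)^\top}{U}^2+\norm{v}{U}^2\right)\qquad\text{for all }v\in H^1(U)^N.
\end{equation}

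It then remains to remove the lower-order term on $H^1_G(U)$. Assuming the assertion fails, there are $v_n\in H^1_G(U)^N$ with $\norm{\nabla v_n}{U}=1$ and $\norm{A\nabla v_n+(A\nabla v_n)^\top}{U}\to0$; the Poincar\'e inequality on $H^1_G(U)$ (where $|G|>0$ and connectedness of $U$ are used) together with \eqref{eq:KornWithRemainder} bounds $(v_n)$ in $H^1(U)$, so along a subsequence $v_n\to v$ in $L^2(U)$, and applying \eqref{eq:KornWithRemainder} to $v_n-v_m$ shows $(v_n)$ is Cauchy in $H^1(U)$, whence $v_n\to v$ in $H^1(U)$ with $v\in H^1_G(U)$, $\norm{\nabla v}{U}=1$ and $A\nabla v+(A\nabla v)^\top=0$ a.e. The remaining step — and the main obstacle — is to conclude $v=0$, contradicting $\norm{\nabla v}{U}=1$; note that, unlike the case $A=\1$, the kernel of $v\mapsto A\nabla v+(A\nabla v)^\top$ is in general \emph{not} the space of rigid motions. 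One route: by \eqref{eq:KornWithRemainder} this kernel is a subspace of $H^1(U)^N$ on which the $H^1$- and $L^2$-norms are equivalent, hence finite-dimensional by Rellich's compactness theorem, and one then shows — using that the first-order system $A\nabla v+(A\nabla v)^\top=0$ is overdetermined elliptic — that a nontrivial element of this kernel cannot vanish on the relatively open set $G$; alternatively one may invoke the known version of Korn's inequality with continuous variable coefficients on Lipschitz domains with a Dirichlet condition on part of the boundary. I expect this final rigidity step to be the heart of the matter.
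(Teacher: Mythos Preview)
The paper does not prove this lemma at all: its entire proof is the single sentence ``Lemma~\ref{lemma:KornCoeff} is proven in \cite{Pom03}'', i.e.\ a citation to Pompe's paper on Korn's first inequality with variable coefficients. So there is nothing to compare your argument against inside the paper itself.

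Your outline is the standard route to such results and the first two stages are sound: the reduction for constant $M_0$ via $M_0\nabla v=\nabla(M_0v)$ and classical Korn on $H^1_G(U)^N$ is correct, and the localisation via a partition of unity together with the second Korn inequality on $U$ yields the Korn-with-remainder estimate \eqref{eq:KornWithRemainder} as you describe. The compactness argument then correctly produces a nontrivial $v\in H^1_G(U)^N$ with $A\nabla v+(A\nabla v)^\top=0$, and reduces the lemma to showing that no such $v$ exists.

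The gap is precisely the step you yourself flag as ``the heart of the matter'': you do not actually prove the rigidity statement that $A\nabla v+(A\nabla v)^\top=0$ together with $v|_G=0$ forces $v\equiv0$. Your second alternative---invoking ``the known version of Korn's inequality with continuous variable coefficients on Lipschitz domains with a Dirichlet condition on part of the boundary''---is exactly the lemma being proved, hence circular. Your first alternative (finite-dimensional kernel plus an appeal to overdetermined ellipticity and some unique-continuation-from-the-boundary principle) is a reasonable plan, but carrying it out for merely \emph{continuous} $A$ on a Lipschitz domain with only partial Dirichlet data is nontrivial and is essentially the content of \cite{Pom03}. In short, your write-up is correct up to and including \eqref{eq:KornWithRemainder}, but the decisive final step is deferred rather than proved.
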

\begin{proof}
Lemma \ref{lemma:KornCoeff} is proven in \cite{Pom03}.
\end{proof}

\subsection{Right-inverse divergence operator}
In order to construct explicitly the operators $\div_\e^{-1} : L^2(\Omega)\rightarrow H^1_\Ge(\Oe)$, we use the following right inverse divergence operator (see.~Lemma \ref{lemma:div-1}) and the restriction operator (see.~Lemma \ref{lemma:Re}), which was originally introduced in \cite{Tar79} and developed further in \cite{All89}.
\begin{lemma}\label{lemma:div-1}
Let $U$ be a bounded, connected Lipschitz domain. Then, there exists a bounded linear operator $\div^{-1} : L^2(U) \rightarrow H^1(U)$ such that $\div \circ \div^{-1} = id_{L^2(U)}$.
\end{lemma}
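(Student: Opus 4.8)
The plan is to reduce the statement to the classical solvability of the divergence equation with homogeneous boundary values on a bounded Lipschitz domain (the Bogovski\u{\i} operator), the only subtlety being that $f \in L^2(U)$ need not have vanishing mean, whereas $\int_U \div u \,\dx = \int_{\partial U} u \cdot n \,\dsx$ is constrained only if one insists on zero boundary data. Since the notation $\div$ forces the target space to be vector-valued, I read the claim as producing $\div^{-1} : L^2(U) \to H^1(U)^N$.

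First I would split off the mean value: for $f \in L^2(U)$ set $\langle f \rangle_U \coloneqq \frac{1}{|U|}\int_U f \,\dx$ and $f_0 \coloneqq f - \langle f \rangle_U$, so that $f_0$ lies in $L^2_0(U) \coloneqq \{ g \in L^2(U) \mid \int_U g\,\dx = 0 \}$, the assignment $f \mapsto (f_0,\langle f\rangle_U)$ is linear, and $\norm{f_0}{U} + |\langle f\rangle_U| \leq C\norm{f}{U}$. Since $U$ is a bounded connected Lipschitz domain, it is a finite union of domains star-shaped with respect to a ball, so the Bogovski\u{\i} construction applies: there is a bounded linear operator $\mathcal{B} : L^2_0(U) \to H^1_0(U)^N$ with $\div(\mathcal{B}g) = g$ and $\norm{\mathcal{B}g}{H^1(U)} \leq C\norm{g}{U}$ for all $g \in L^2_0(U)$ (Bogovski\u{\i}; see also Ne\v{c}as, or Galdi's monograph). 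For the constant part, I would use the explicit field $w(x) \coloneqq x/N$, which is smooth on $\overline{U}$ (hence in $H^1(U)^N$ as $U$ is bounded) and satisfies $\div w \equiv 1$. Then the operator
\[
\div^{-1} f \coloneqq \mathcal{B}f_0 + \langle f\rangle_U\, w
\]
is linear, bounded by $\norm{\div^{-1} f}{H^1(U)} \leq \norm{\mathcal{B}f_0}{H^1(U)} + |\langle f\rangle_U|\,\norm{w}{H^1(U)} \leq C\norm{f}{U}$, and satisfies $\div(\div^{-1} f) = \div(\mathcal{B}f_0) + \langle f\rangle_U \div w = f_0 + \langle f\rangle_U = f$, i.e.\ $\div \circ \div^{-1} = \mathrm{id}_{L^2(U)}$.

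The only non-elementary ingredient — and hence the "hard part" — is the existence of the Bogovski\u{\i} operator $\mathcal{B}$ with the stated $H^1$-bound on a general bounded Lipschitz domain; this is a well-known result that I would simply cite. Everything else is the elementary mean-value bookkeeping above. (For the later application to $\div_\e^{-1}$, note that this lemma will be combined with the restriction operator of Tartar--Allaire to transfer the construction to the perforated domain $\Oe$ with zero trace on $\Ge$, so only the plain-domain version is needed here.)
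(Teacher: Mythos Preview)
Your proposal is correct and is essentially the same approach as the paper, which simply states that the lemma ``can be easily proven by means of the Bogovski\u{\i} operator.'' You have supplied the one piece of bookkeeping the paper omits, namely splitting off the mean value (since the classical Bogovski\u{\i} operator acts on $L^2_0(U)$) and handling the constant part by the explicit field $x/N$; this is the standard way to extend to all of $L^2(U)$ and matches the intent of the paper's one-line proof.
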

\begin{proof}
Lemma \ref{lemma:div-1} can be easily proven by means of the Bogovski\v{i} operator.
\end{proof}
\begin{lemma}\label{lemma:Re}
There exists a linear continuous operator $R_\e : H^1(\Omega)^N \rightarrow H^1_\Ge(\Oe)^N$ such that 
\begin{enumerate}
\item $u \in  H^1_\Ge(\Oe)$ implies $R_\e u = u$ in $\Oe$
\item $\div (R_\e u) = \div(u) + \frac{1}{|\Yp|}\sum\limits_{k\in I_\e} \chi_{k+ \e\Yp}\int\limits_{k + \e\Ys} \div(u)$,
\item $\div(u) =0$ implies $\div(R_\e u) = 0$,
\item there exists a constant $C$ such that
\begin{align*}
\norm{R_\e u}{\Oe} + \e\norm{\nabla (R_\e u)}{\Oe} \leq \norm{ u}{\Omega} +\e \norm{\nabla u}{\Omega}
\end{align*}
for every $u \in H^1(\Omega)^N$.
\end{enumerate}
\end{lemma}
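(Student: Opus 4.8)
The plan is to build $R_\e$ cell by cell from a single restriction operator on the reference cell $Y$, and then to read off all four properties --- including the $\e$-weighted bound --- by a scaling argument; the substance is the reference-cell operator, the rest is bookkeeping. So, first I would record a bounded linear operator $R\colon H^1(Y)^N\to H^1_\Gamma(\Yp)^N$ with
\begin{enumerate}[label=(\roman*)]
\item $R\phi=\phi|_{\Yp}$ whenever $\phi|_{\Yp}\in H^1_\Gamma(\Yp)^N$;
\item $R\phi=\phi$ on $\partial Y\cap\partial\Yp$;
\item $\div(R\phi)=\div\phi+c_\phi$ on $\Yp$, with $c_\phi\coloneqq\tfrac{1}{|\Yp|}\int_{\Ys}\div\phi\,\dy$ constant;
\item $\norm{R\phi}{\Yp}+\norm{\nabla R\phi}{\Yp}\leq C\big(\norm{\phi}{Y}+\norm{\nabla\phi}{Y}\big)$.
\end{enumerate}
When $\overline{\Ys}\subset Y$, this is Tartar's restriction operator \cite{Tar79}: one subtracts from $\phi|_{\Yp}$ a bounded lift of the trace of $\phi$ on $\Gamma$ (chosen to vanish on $\partial Y\cap\partial\Yp$), corrected by a Bogovski\v{i} term so that the subtracted field has divergence exactly the constant $-c_\phi$ --- which is admissible because that lift's divergence has mean $\int_\Gamma\phi\cdot n=-\int_{\Ys}\div\phi$. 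Subtracting a lift of the \emph{trace} of $\phi$ is exactly what forces $R\phi=\phi|_{\Yp}$ whenever $\phi$ already vanishes on $\Gamma$, i.e.\ (i); (ii)--(iv) are then immediate. In the generality permitted by the standing assumptions on $\Yp$ --- connected, Lipschitz, periodicity-compatible, with $Y^{\mathrm p}_{\#}$ connected and of class $C^1$ --- the solid closure may meet $\partial Y$, and I would instead invoke Allaire's refinement \cite{All89}, which uses precisely the connectedness and $C^1$-regularity of $Y^{\mathrm p}_{\#}$ to re-route the flux correction through the pore network while keeping (i)--(iv).

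Second, I would transplant and glue: for $u\in H^1(\Omega)^N$ and $k\in I_\e$ put $\hat u_k(y)\coloneqq u(k+\e y)$ on $Y$ and define $(R_\e u)(x)\coloneqq(R\hat u_k)\big(\tfrac{x-k}{\e}\big)$ for $x\in k+\e\Yp$; since $\Omega$ is exactly tiled by the cells $k+\e\overline{Y}$, $k\in I_\e$, this determines $R_\e u$ on all of $\Oe$. Property (1) follows from (i) in each cell. That $R_\e u\in H^1_{\Ge}(\Oe)^N$ follows from (i)--(ii): its trace on each $k+\e\Gamma$ vanishes, while across an interior cell face the pore-traces from the two adjacent cells both coincide with the single-valued trace of $u\in H^1(\Omega)^N$, so the cell pieces glue into one $H^1(\Oe)^N$-function. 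Rescaling (iii) via $x=k+\e y$ --- which turns $\div_y$ into $\e\,\div_x$ and a $\Ys$-integral into $\e^{-N}$ times the corresponding $(k+\e\Ys)$-integral --- gives the cell identity $\div(R_\e u)=\div u+\tfrac{1}{|k+\e\Yp|}\int_{k+\e\Ys}\div u\,\dx$ on $k+\e\Yp$, which is property (2); property (3) is the case $\div u=0$, where cell-wise vanishing of the divergence together with $R_\e u\in H^1(\Oe)^N$ gives $\div(R_\e u)=0$ on $\Oe$. Finally, the change of variables gives $\norm{R_\e u}{k+\e\Yp}^2=\e^N\norm{R\hat u_k}{\Yp}^2$ and $\e^2\norm{\nabla R_\e u}{k+\e\Yp}^2=\e^N\norm{\nabla_y R\hat u_k}{\Yp}^2$, each bounded via (iv) by $C\e^N\big(\norm{\hat u_k}{Y}^2+\norm{\nabla_y\hat u_k}{Y}^2\big)=C\big(\norm{u}{k+\e Y}^2+\e^2\norm{\nabla u}{k+\e Y}^2\big)$; summing over $k\in I_\e$ and taking square roots yields $\norm{R_\e u}{\Oe}+\e\norm{\nabla R_\e u}{\Oe}\leq C\big(\norm{u}{\Omega}+\e\norm{\nabla u}{\Omega}\big)$, i.e.\ property (4).

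The one genuine difficulty sits in the reference-cell step when $\overline{\Ys}$ meets $\partial Y$: producing a divergence correction that is \emph{exactly constant} on each pore copy --- i.e.\ (iii) in the sharp form demanded by property (2) --- since then a single cell's flux through $\partial\Ys\cap\partial Y$ need not vanish and the plain corrector above breaks down. This is exactly the improvement of Allaire over Tartar, and for it I would rely on \cite{All89}; the transplantation, gluing and scaling in the rest of the argument are routine.
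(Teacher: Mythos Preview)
Your proposal is correct and follows the same route as the paper: the paper's own proof simply cites Allaire \cite{All89}, observing that the construction there is done locally (cell by cell), so that it immediately upgrades from $H^1_0(\Omega)^N\to H^1_0(\Oe)^N$ to $H^1(\Omega)^N\to H^1_{\Ge}(\Oe)^N$. You have spelled out exactly this local-to-global assembly --- the reference-cell operator with properties (i)--(iv), the gluing via trace-matching on cell faces (your (ii)), and the $\e$-scaling --- which is precisely the content behind the paper's one-line remark.
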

\begin{proof}
In \cite{All89} this restriction operator is explicitly constructed from $H^1_0(\Omega)^N$ to $H^1_0(\Oe)^N$. Indeed, the construction is done locally so that the same construction yields an operator $R_\e : H^1(\Omega)^N \rightarrow H^1_\Ge(\Oe)^N$.
\end{proof}
\begin{lemma}\label{lemma:dive-1}
There exists a linear continuous operator $\div_\e^{-1} : L^2(\Oe) \rightarrow H^1_\Ge(\Oe)^N$, which is right inverse to the divergence, i.e.~$\div \circ \div_\e^{-1} = id_{L^2(\Oe)}$, such that
\begin{align*}
\norm{\div_\e^{-1}(f)}{\Oe} + \e\norm{\nabla \div_\e^{-1}(f)}{\Oe}\leq \norm{f}{L^2(\Oe)}
\end{align*}
for every $f \in L^2(\Oe)$.
\end{lemma}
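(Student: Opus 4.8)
The plan is to build $\div_\e^{-1}$ by combining the two ingredients already at hand: the (unscaled) Bogovski\u{i}-type right inverse $\div^{-1}$ from Lemma \ref{lemma:div-1} applied on the \emph{macroscopic} domain $\Omega$, and the restriction operator $R_\e$ from Lemma \ref{lemma:Re} which brings the result back into $H^1_\Ge(\Oe)^N$ while controlling the divergence. The subtle point is that $R_\e$ does not preserve the divergence pointwise; by item (2) of Lemma \ref{lemma:Re} it adds a piecewise-constant cell-wise correction term. So a naive composition $R_\e \circ \div^{-1}$ is not right-inverse to the divergence, and the construction has to be iterated or corrected to kill the extra term.

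Concretely, I would proceed as follows. Given $f \in L^2(\Oe)$, first extend it to $\Omega$: the natural choice is $\tilde f \coloneqq f$ on $\Oe$ and $\tilde f \coloneqq -\frac{1}{|\Ys|}\sum_{k \in I_\e}\chi_{k+\e\Ys}\int_{k+\e\Yp} f$ on $\Oes$ (i.e.\ on each solid cell put the constant that makes the cell average of $\tilde f$ vanish, after rescaling $\Yp,\Ys$ by $\e$); note $\tilde f$ has mean value zero on each cell $k+\e Y$ and hence on all of $\Omega$ (using $I_\e$-cells tile $\Omega$). Apply $\div^{-1}$ on $\Omega$ (Lemma \ref{lemma:div-1}) to get $u \coloneqq \div^{-1}(\tilde f)\in H^1(\Omega)^N$ with $\div u = \tilde f$ and $\norm{u}{H^1(\Omega)} \le C\norm{\tilde f}{\Omega} \le C\norm{f}{\Oe}$. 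Then set $\div_\e^{-1}(f)\coloneqq R_\e u$. By Lemma \ref{lemma:Re}(2),
\begin{align*}
\div(R_\e u) = \div(u) + \frac{1}{|\Yp|}\sum_{k\in I_\e}\chi_{k+\e\Yp}\int_{k+\e\Ys}\div(u) = \tilde f + \frac{1}{|\Yp|}\sum_{k\in I_\e}\chi_{k+\e\Yp}\int_{k+\e\Ys}\tilde f.
\end{align*}
On the cell $k+\e\Yp$ we have $\tilde f = f$, and by the cell-mean-zero property $\int_{k+\e\Ys}\tilde f = -\int_{k+\e\Yp}\tilde f = -\int_{k+\e\Yp} f$; but wait — for the correction to cancel we actually need $\int_{k+\e\Yp} f = 0$ on each cell, which is not given. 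The clean fix is therefore to \emph{not} make $\tilde f$ cell-mean-zero but instead to choose the extension on $\Oes$ so that after applying $R_\e$ the spurious term is exactly $-(\tilde f - f)$ on $\Oe$; i.e.\ one should define the solid-part values of $\tilde f$ and the correction term together via a small fixed-point / direct-solve in the finite-dimensional space of cell-wise constants, which is trivially invertible since $\frac{|\Ys|}{|\Yp|}$ acting on cell constants is just a scalar multiple of the identity. Carrying this out, $R_\e u$ restricted to $\Oe$ has divergence exactly $f$.

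For the norm bound, combine $\norm{u}{\Omega}+\e\norm{\nabla u}{\Omega}\le \norm{u}{\Omega}+\norm{\nabla u}{\Omega} \le C\norm{\tilde f}{\Omega}$ (here $\e<1$; the continuity constant of $\div^{-1}$ on the fixed domain $\Omega$ is $\e$-independent) with item (4) of Lemma \ref{lemma:Re}, $\norm{R_\e u}{\Oe}+\e\norm{\nabla R_\e u}{\Oe}\le \norm{u}{\Omega}+\e\norm{\nabla u}{\Omega}$, and the elementary estimate $\norm{\tilde f}{\Omega}\le C\norm{f}{\Oe}$ for the chosen extension (cell constants are bounded by cell averages of $|f|$, and summing over cells gives an $\e$-independent constant by Jensen/Cauchy--Schwarz on each cell, the $\e$-powers from volume scaling cancelling between numerator and denominator). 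This yields $\norm{\div_\e^{-1}(f)}{\Oe}+\e\norm{\nabla\div_\e^{-1}(f)}{\Oe}\le C\norm{f}{\Oe}$; absorbing $C$ into the statement (or tracking it) gives the claimed inequality. Linearity and continuity of $\div_\e^{-1}$ are immediate since every building block is linear and bounded.

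The main obstacle is precisely the bookkeeping around Lemma \ref{lemma:Re}(2): one must arrange the extension of $f$ to $\Oes$ so that the cell-wise correction produced by $R_\e$ is annihilated on $\Oe$, and simultaneously keep the $\e$-scaling of all constants uniform. The key realisation that makes this painless is that the correction operator in Lemma \ref{lemma:Re}(2) acts diagonally on the (finite-dimensional, per-cell) space of piecewise constants as multiplication by the fixed ratio $|\Ys|/|\Yp|$, so ``correcting'' it is just dividing by a fixed constant — no genuine fixed-point iteration or compactness is needed, and no $\e$-dependence creeps in. Everything else is the standard Bogovski\u{i} estimate on the fixed domain $\Omega$ plus the uniform bound (4) for $R_\e$.
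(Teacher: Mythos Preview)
Your overall structure --- compose $R_\e$ with $\div^{-1}$ applied to an extension of $f$ to $\Omega$ --- is exactly the paper's approach, and your norm-estimate paragraph is fine. But you have talked yourself into a non-existent difficulty. The paper simply extends $f$ by \emph{zero} on $\Oes$ and defines $\div_\e^{-1}(f)\coloneqq R_\e(\div^{-1}(\widetilde f))$. Then Lemma~\ref{lemma:Re}(2) gives, on $\Oe$,
\[
\div(R_\e\div^{-1}\widetilde f)=\widetilde f+\frac{1}{|\Yp|}\sum_{k\in I_\e}\chi_{k+\e\Yp}\int_{k+\e\Ys}\widetilde f = f + 0,
\]
because the correction integrals are taken over the \emph{solid} cells $k+\e\Ys$, where $\widetilde f\equiv 0$ by construction. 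So the ``naive composition'' \emph{is} already a right inverse; no fixed-point, no cell-constant bookkeeping, no iteration is needed.

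Your first proposed extension (cell-mean-zero) and the subsequent ``clean fix'' are both unnecessary: if you work through your own fix, the condition you need is precisely $\int_{k+\e\Ys}\tilde f=0$ for each $k$, which forces the solid-part constant to be zero anyway. In short, the only thing missing from your argument is noticing that the spurious term in Lemma~\ref{lemma:Re}(2) lives on the solid part, so zero-extension kills it for free.
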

\begin{proof}
By Lemma \ref{lemma:div-1} there exists a linear continuous operator $\div^{-1} : L^2(\Omega) \rightarrow H^1(\Omega)^N$ such that $\div(\div^{-1}(f))= f$ for every $f \in L^2(\Omega)$ 
Using this operator and the restriction operator $R_\e$ of Lemma \ref{lemma:Re}, we can define
\begin{align*}
\div_\e^{-1}( f) \coloneqq R_\e (\div^{-1} (\widetilde{f})),
\end{align*}
where $\widetilde{f}$ denotes the extension of $f$ by $0$ to $\Omega \setminus \Oe$.
	
The explicit formula for $\div \circ R_\e$ from Lemma \ref{lemma:Re} shows
\begin{align*} 
\div (\div_\e^{-1} (f)) &= \div (R_\e ( \div^{-1} (\widetilde{f}))) =
\\
&
=\div (\div^{-1} (\widetilde{f})) + \frac{1}{|\Yp|} \sum\limits_{k\in I_\e} \chi_{k+ \e\Yp}\int\limits_{k + \e\Ys} \div (\div^{-1}  (\widetilde{f}(x))) \dx
\\
&=
\widetilde{f} + \frac{1}{|\Yp|} \sum\limits_{k\in I_\e} \chi_{k+ \e\Yp}\int\limits_{k + \e\Ys} \widetilde{f}(x) \dx
= f.
\end{align*}
Moreover, using the estimate of Lemma \ref{lemma:Re}, we obtain, for $\e \leq 1$, 
\begin{align*}
\norm{ \div_\e^{-1}(f)}{\Oe} + \e\norm{ \nabla \div_\e^{-1}(f)}{\Oe} 
\leq 
C\Big( \norm{\div^{-1} (\widetilde{f})}{\Omega}+ \e \norm{\nabla \div^{-1} (\widetilde{f})}{\Omega} \Big) 
\\
\leq C \norm{\nabla \div^{-1} (\widetilde{f})}{H^1(\Omega)} \leq \norm{\widetilde{f}}{\Omega} = C\norm{f}{\Oe}.
\end{align*}
\end{proof}

\subsection{Estimates of the data}
\begin{lemma}\label{lemma:EstimatesHatData}
There exists a constant $C \in L^{p_s}(S)$ such that
\begin{align*}
&\norm{\hat{f}_\e(t)}{L^2(\Omega)} + \norm{\hat{p}_{b,\e}(t)}{L^2(\Omega)} +
\norm{\nabla\hat{p}_{b,\e}(t)}{L^2(\Omega)} 
\\
&+\frac{1}{\e}\norm{\hat{v}_{\Ge}(t)}{L^2(\Omega)} + \norm{\nabla \hat{v}_{\Ge}(t)}{L^2(\Omega)} \leq C(t)
\end{align*}
for a.e.~$t\in S$.
\end{lemma}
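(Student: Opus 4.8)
The plan is to reduce every term to the corresponding untransformed quantity by the change of variables $y = \psi_\e(t,x)$ and then to invoke Assumption \ref{ass:Data} together with the uniform bounds on $\psi_\e$ supplied by Lemma \ref{lemma:EstimatesPsi}.

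First, for the zeroth-order terms I would argue as follows. By definition \eqref{eq:TrafoData}, $\hat f_\e$, $\hat p_{b,\e}$ and $\hat v_{\Ge}$ are the compositions $f_\e(t,\psi_\e(t,\cdot))$, $p_{b,\e}(t,\psi_\e(t,\cdot))$ and $v_{\Ge}(t,\psi_\e(t,\cdot))$. Since $\psi_\e(t,\cdot)$ is a $C^2$-diffeomorphism of $\overline{\Omega}$ onto itself whose Jacobian $J_\e(t) = \det \Psi_\e(t)$ satisfies $c_J \le J_\e(t) \le C$ for a.e.\ $t$ (Assumption \ref{ass:psi} and Lemma \ref{lemma:EstimatesPsi}), the substitution formula gives, for $g_\e \in \{ f_\e, p_{b,\e}, v_{\Ge} \}$,
\[
\norm{\hat g_\e(t)}{L^2(\Omega)}^2 = \int_\Omega |g_\e(t,\psi_\e(t,x))|^2\,\dx = \int_\Omega |g_\e(t,y)|^2 \, J_\e^{-1}(t,\psi_\e^{-1}(t,y))\,\dy \le c_J^{-1}\norm{g_\e(t)}{L^2(\Omega)}^2 .
\]
Combined with Assumption \ref{ass:Data}(1),(3),(5) this controls $\norm{\hat f_\e(t)}{L^2(\Omega)}$, $\norm{\hat p_{b,\e}(t)}{L^2(\Omega)}$ and $\tfrac1\e\norm{\hat v_{\Ge}(t)}{L^2(\Omega)}$ by $C(t)$.

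Second, for the first-order terms I would use the chain rule $\nabla_x\big(g_\e(t,\psi_\e(t,\cdot))\big) = \Psi_\e(t)^\top (\nabla g_\e)(t,\psi_\e(t,\cdot))$, so that pointwise $|\nabla \hat g_\e(t,x)| \le \norm{\Psi_\e(t)}{C(\overline{\Omega})}\,\big|(\nabla g_\e)(t,\psi_\e(t,x))\big|$. Since $\norm{\Psi_\e}{L^\infty(S;C(\overline{\Omega}))} \le C$ by Lemma \ref{lemma:EstimatesPsi} — and, crucially, no inverse power of $\e$ enters this factor — the same change of variables as above yields $\norm{\nabla \hat g_\e(t)}{L^2(\Omega)} \le C c_J^{-1/2}\norm{\nabla g_\e(t)}{L^2(\Omega)}$. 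Applying this with $g_\e = p_{b,\e}$ and $g_\e = v_{\Ge}$ and using Assumption \ref{ass:Data}(3),(5) bounds $\norm{\nabla \hat p_{b,\e}(t)}{L^2(\Omega)}$ and $\norm{\nabla \hat v_{\Ge}(t)}{L^2(\Omega)}$ by $C(t)$.

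Finally, summing the five estimates and absorbing all $\e$- and $t$-independent constants into the generic constant $C$, the resulting bound is of the form $C\,\tilde C(t)$ with $\tilde C$ a finite sum of the $L^{p_s}(S)$ bounds furnished by Assumption \ref{ass:Data}, hence $\tilde C \in L^{p_s}(S)$; measurability in $t$ is inherited from $g_\e$ and $\psi_\e$. I do not expect a serious obstacle: the only points needing care are that the Jacobian bound is two-sided and $\e$-uniform, that the chain-rule factor $\Psi_\e$ contributes no inverse power of $\e$ (so the $\tfrac1\e$-scaling survives exactly for $v_{\Ge}$, as assumed), and that the change-of-variables and chain-rule identities are applied to $H^1$ rather than merely smooth functions, which is legitimate because $\psi_\e(t,\cdot)$ is a $C^2$-diffeomorphism.
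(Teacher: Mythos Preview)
Your proof is correct and follows essentially the same route as the paper: change of variables with the uniform Jacobian bound $J_\e \ge c_J$ for the $L^2$-terms, and the chain rule $\nabla \hat g_\e = \Psi_\e^\top (\nabla g_\e)\circ\psi_\e$ combined with the uniform bound on $\Psi_\e$ from Lemma~\ref{lemma:EstimatesPsi} for the gradient terms, then invoking Assumption~\ref{ass:Data}. Your presentation is in fact slightly cleaner in emphasising that $\Psi_\e$ contributes no inverse power of $\e$, which is exactly what preserves the $\tfrac1\e$-scaling for $\hat v_{\Ge}$.
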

\begin{proof}
We note that $D \psi_\e^{-1}\tx = \Psi_\e^{-1}(t,\psi_\e^{-1}\tx)$. Hence, we get
\begin{align*}
\norm{\hat{f}_\e(t)}{\Omega} = \Big(\intO f_\e(t,\psi_\e\tx))^2 \dx\Big)^\frac{1}{2} = \Big(\intO \det(\Psi_\e^{-1}(t,\psi_\e^{-1}\tx)) f_\e\tx^2\dx\Big)^\frac{1}{2} 
\\
\Big(\intO J_\e^{-1}(t,\psi_\e^{-1}\tx) f_\e\tx^2\dx\Big)^\frac{1}{2} 
\leq C \Big(\intO f_\e\tx^2\dx\Big)^\frac{1}{2} \leq \norm{f_\e(t)}{\Omega}.
\end{align*}
Then, the uniform bound of $f_\e(t)$ given by Assumption \ref{ass:Data} implies the uniform bound of $\norm{\hat{f}(t)}{L^2(\Omega)} \leq C(t)$ for a.e.~$t\in S$.
By the same computation, we obtain
$\norm{\hat{p}_{b,\e}(t)}{L^2(\Omega)} +
\frac{1}{\e}\norm{\hat{v}_{\Ge}(t)}{L^2(\Omega)}\leq C(t)$.

In order to estimate the gradient, we use the chain rule and rewrite $\nabla \hat{p}_{b,\e}\tx = \Psi_\e^\top\tx \nabla p_{b,\e}(t,\psi_\e\tx)$. Then, the uniform estimates of Lemma \ref{lemma:EstimatesPsi} yields
\begin{align*}
&\norm{\nabla \hat{p}_{b,\e}(t)}{\Omega} =\Big(\intO J_\e^{-1}(t,\psi_\e^{-1}\tx) \Psi_\e^{-\top}(t,\psi_\e^{-1}\tx) \nabla p_{b,\e}^2 \tx\dx\Big)^\frac{1}{2} 
\\
&\leq\Big(\intO J_\e^{-1}(t,\psi_\e^{-1}\tx) \nabla p_{b,\e}\tx^2\dx\Big)^\frac{1}{2} 
\leq C \Big(\intO \nabla p_{b,\e}^2\tx\dx.\Big)^\frac{1}{2} \leq C \norm{\nabla p_{b,\e}(t)}{\Omega}.
\end{align*}
The uniform bound of $\norm{\nabla p_{b,\e}(t)}{\Omega}$ given by Assumption \ref{ass:Data} implies the uniform bound of $\norm{\nabla \hat{p}_{b,\e}(t)}{L^2(\Omega)} \leq C(t)$ for a.e.~$t\in S$.
By the same computation, we obtain
$\norm{\nabla \hat{v}_{\Ge}(t)}{L^2(\Omega)}\leq C(t)$.
\end{proof}

\begin{proof}[Proof of Theorem \ref{thm:ExistenceEps}]
First, we show that there exists a solution $(\hat{w}_\e(t),\hat{q}_\e(t)) \in H^1_\Ge(\Oe)^N \times L^2(\Oe)$ of \eqref{eq:WeakFormTransformed1}--\eqref{eq:WeakFormTransformed2} for a.e.~$t\in S$.
Due to the Poincar\'e inequality from Lemma \ref{lemma:PoincareEps}, $
\norm{\cdot}{V_\e} : \HepsN \rightarrow \R,	\ 	v \mapsto \norm{v}{V_\e} := \e \norm{\nabla v}{\Oe}$
defines a norm on $\HepsN$.
We define the following bilinear forms for a.e.\ $t\in S$ 
\begin{align}\notag
&a_{\e}(t) : \HepsN \times \HepsN \rightarrow \R, \ \ \
&&(v,w) \mapsto (\e^2 \nu A_\e(t) \hat{e}_{\e,t}(v), \nabla w)_\Oe,
\\\notag
&b_{\e}(t) : \HepsN \times L^2(\Oe) \rightarrow \R, \ \ \
&&(v,p) \mapsto (\div(A_\e(t) v), p)_\Oe. 
\end{align}
Using the uniform estimates of Lemma \ref{lemma:EstimatesPsi} and the Korn-type inequality for the two-scale transformation method (see.~Lemma \ref{lemma:KornPsiEps}), we obtain the following uniform coercivity and continuity estimate for the bilinear form $a_\e(t)$:
\begin{align}\notag
a_{\e}&(t)(w,w) = (\e^2 \nu A_\e(t) \hat{e}_{\e,t}(w), \nabla w)_\Oe  = (\e^2 \nu J_\e(t) \hat{e}_{\e,t}(w), \Psi_\e^{-\top}(t) \nabla w)_\Oe 
\\\notag
&=
(\e^2 \nu J_\e(t) \hat{e}_{\e,t}(w), (\Psi_\e^{-\top} \nabla w + (\Psi_\e^{-\top} \nabla w)^\top)/2)_\Oe
\\\notag
&= (\e^2 \nu J_\e(t) \hat{e}_{\e,t}(w),\hat{e}_{\e,t}(w))_\Oe
\\\label{eq:CoercivityAeps}
&\geq \e^2 \nu c_J 
\norm{\hat{e}_{\e,t}(w)}{\Oe}^2
\geq  \e^2 \nu c_J \alpha \norm{\nabla w}{\Oe}^2
\geq C \norm{w}{V_\e}^2,
\end{align}
\begin{align}\label{eq:ContinuityAeps}
a_{\e}(t)(v,w) = (\e^2 \nu J_\e(t) \Psi_\e^{-1}(t) (\Psi_\e^{-\top} \nabla v + (\Psi_\e^{-\top} \nabla v), \nabla w)_\Oe 
\leq C \norm{v}{V_\e} \norm{w}{V_\e}.
\end{align}
In order to give a uniform estimate of the inf--sup constant, we choose an arbitrary $\phi \in L^2(\Oe)$.
Then, Lemma \ref{lemma:dive-1} gives a constant $C \in \R$, independent of $\e$ and $\phi$, and a function $\hat{v} \in \HepsN$ such that 
\begin{align}
\div(\hat{v}) = \phi,\ \ \
\e\norm{\hat{v}}{\Oe} \leq C \norm{\phi}{\Oe}.
\end{align}
We define $v \coloneqq J_\e^{-1}(t) \Psi_\e(t) \hat{v} \in \HepsN$. Using the product rule, the estimates from Lemma \ref{lemma:EstimatesPsi} and the $\e$-scaled Poincar\'e inequality (cf.~Lemma \ref{lemma:PoincareEps}), we obtain:
\begin{align}\notag
\norm{v}{V_\e} &= \e \norm{D(J_\e^{-1}(t) \Psi_\e(t) \hat{v})}{\Oe} 
\leq
\e \norm{D(J_\e^{-1}(t) \Psi_\e(t)) \hat{v}}{\Oe} 
+
\e \norm{J_\e^{-1}(t) \Psi_\e(t) D\hat{v}}{\Oe} 
\\\notag
&\leq C \norm{\hat{v}}{\Oe} + C \e \norm{D\hat{v}}{\Oe}
\leq C \norm{\hat{v}}{V_\e}
\leq C \norm{\phi}{\Oe}.
\end{align}
With this choice of $v$, we see that
\begin{align}\label{eq:InfsupBeps}
\sup\limits_{v\in \HepsN} \frac{|b_\e(t)(v, \phi)|}{|v|_{V_\e}} 
\geq
\frac{(\phi,\phi)_{\Oe}}{\norm{v}{V_\e}}
\geq
\frac{\norm{\phi}{\Oe}^2}{C\norm{\phi}{\Oe}}
= C\norm{\phi}{\Oe}
\end{align}
for a.e.\ fixed $t \in S$.

The continuity of the bilinear form $b_\e(t)$ follows with the product rule and the $\e$-scaled Poincar\'e inequality:
\begin{align}\notag
b_\e(t)(v,p) &=
(\div(A_\e(t) v),p)
\leq
(\norm{\div(A_\e(t)) v}{\Oe}		
+
\norm{A_\e(t) : \nabla v}{\Oe} ) \norm{p}{\Leps}
\\\label{eq:ContinuityBeps}
&\leq C(\e^{-1} \norm{v}{\Oe} + \norm{\nabla v}{\Oe}) \norm{p}{\Leps}
\leq \e^{-1}C\norm{v}{V_\e} \norm{p}{\Leps}.
\end{align}
Note that a more precise estimate like for the inf--sup constant does not yield an $\e$-independent constant for $b_\e(t)$. However, the norm of $b_\e$ does not appear in the right-hand sides of \eqref{eq:estimateV} and \eqref{eq:estimateP}. Nevertheless, the continuity and the coercivity constant of $a_\e(t)$ as well as the inf--sup constant of $b_\e(t)$, which occur in \eqref{eq:estimateV} and \eqref{eq:estimateP}, do not depend on $\e$ or $t$. 
	
Now, we estimate the right-hand sides of \eqref{eq:WeakFormTransformed1} and \eqref{eq:WeakFormTransformed2}.
For the first summand of the right-hand side of \eqref{eq:WeakFormTransformed1}, we obtain with Lemma \ref{lemma:EstimatesPsi}, Lemma \ref{lemma:EstimatesHatData} and the $\e$-scaled Poincar\'e inequality
\begin{align}\notag
&\norm{J_\e(t)\hat{f}_\e(t)- A_\e(t) \nabla \hat{p}_{b,\e}}{V_\e'}  
\\\notag
&=
\sup\limits_{v \in \HepsN} 
\frac{\intOe (J_\e\tx\hat{f}_\e\tx- A^\top_\e\tx \nabla \hat{p}_{b,\e}\tx) \cdot v(x) \ \dx}{\norm{v}{V_\e}}
\\\label{eq:Estf1}
&\leq
\frac{C(t) \norm{v}{\Oe}}{\norm{v}{V_\e}}
\leq
\frac{C(t) \norm{v}{V_\e} }{\norm{v}{V_\e}}
\leq
C(t),
\end{align}
where $C \in L^{p_s}(S)$.
We rewrite the second summand of \eqref{eq:WeakFormTransformed1} and obtain from the continuity estimate \eqref{eq:ContinuityAeps} of $a_\e(t)$ 
\begin{align}\label{eq:Estf2}
\norm{-a_\e(t)(\partial_t\psi(t), \cdot )}{V_\e'} \leq
C \norm{\partial_t \psi(t)}{V_\e'} \leq \e C.
\end{align}
Later this term will also vanish during the homogenisation because it is of order $O(\e)$.
We can estimate the right-hand side of \eqref{eq:WeakFormTransformed2}  with the continuity estimate \eqref{eq:ContinuityBeps} of $b_\e(t)$ and Lemma \ref{lemma:EstimatesHatData} by
\begin{align}\label{eq:Estg}
\norm{-b_\e(t)(\hat{v}_\Ge(t), \cdot)}{L^2(\Oe)'} \leq  \e^{-1} C \norm{\hat{v}_\Ge(t)}{\Oe}\leq \e^{-1}\e C(t) \leq C(t)
\end{align}
for $C \in L^p(S)$.
Using  Lemma~\ref{lemma:SaddlePoint} with the estimates \eqref{eq:CoercivityAeps}, \eqref{eq:ContinuityAeps}, \eqref{eq:InfsupBeps}, \eqref{eq:Estf1}, \eqref{eq:Estf2} and \eqref{eq:Estg} yields, for $\e>0$ small enough and a.e.~$t \in S$, the existence of a unique solution $(\hat{w}_\e(t), \hat{q}_\e(t)) \in \HepsN \times L^2(\Oe)$ of \eqref{eq:WeakFormTransformed1}--\eqref{eq:WeakFormTransformed2} such that
\begin{align}
\norm{\hat{w}_\e(t)}{V_\e} + \norm{\hat{q}_\e}{\Oe} \leq C(t)
\end{align}
for $C \in L^p(S)$.
	
By the definition of the norm $V_\e$ and Lemma \ref{lemma:PoincareEps}, we can estimate further
\begin{align}
\norm{\hat{w}_\e(t)}{\Oe} + \e \norm{\nabla \hat{w}_\e(t)}{\Oe} + \norm{\hat{q}_\e(t)}{\Oe} \leq C(t)
\end{align}
for $\e>0$ small enough, a.e.~$t \in S$ and $C \in L^p(S)$.
	
By Lemma \ref{lemma:ContinuityWithRespectToData}, we get the continuity of the solution with respect to the bilinear forms and the right-hand sides. Since the bilinear forms and the right-hand sides are measurable in time, $\hat{w}_\e: S \rightarrow \HepsN$ and $\hat{q}_\e : S \rightarrow \Leps$ are measurable functions and thus $\hat{w}_\e \in L^{p_s}(S,\HepsN)$ and $\hat{q}_\e \in L^{p_s}(S;\Leps)$
\end{proof}

\begin{remark}
Another ansatz for the proof of Theorem \ref{thm:ExistenceEps} would be to substitute $\hat{w}_\e$, so that we obtain a homogeneous divergence condition. Then, we could use the Lemma of Lax--Milgram for functions $v$ with $\div(A_\e(t) v )= 0$. Using the same preliminary work, we could prove the same uniform bounds for the velocity and the pressure. But the difficulty with this ansatz is that the measurability with respect to time cannot be concluded directly, since the space of functions $v$ satisfying $\div(A_\e(t) v)= 0$  depends on time.
\end{remark}

\section{Homogenisation in the periodic reference domain }\label{section:Homogenisation}
In this section, we pass to the limit $\e \to 0$ in \eqref{eq:WeakFormTransformed1}--\eqref{eq:WeakFormTransformed2} using the notion of two-scale convergence (cf.~\cite{All92}, \cite{LNW02}) and derive the two-pressure Stokes system \eqref{eq:WeakLimitTransformed1}--\eqref{eq:WeakLimitTransformed3} as two-scale limit problem. 

Find $(\hat{w}_0, \hat{q},\hat{q}_1) \in L^{p_s}(S;\HYperN) \times L^{p_s}(S;H^1_0(\Omega)) \times L^{p_s}(S;L^2(\Omega;L^2_0(\Yp)))$ such that
\begin{align}\notag
\intOYp \nu A_0\txy \Psi_0^{-\top}\txy\nabla_y \hat{w}_0\txy :   \nabla_y\varphi\xy \dyx
\\\notag
+\intOYp A_0^{\top}\txy\nabla_x \hat{q}\tx  \cdot \varphi\xy + \hat{q}_1(t)\div_y(A_0\txy\varphi(x,y)) \dyx
\\\label{eq:WeakLimitTransformed1}
= \intOYp (J_0\txy f\tx - A_0^\top\txy\nabla p_b\tx) \cdot \varphi(x,y) \dyx
\\\label{eq:WeakLimitTransformed2}
\intOYp\div_y (A_0\txy\hat{w}_0\txy )  \theta_1(x,y) \dyx  = 0
\\\notag
\int\limits_{\Omega}   \div_x \Big(\intYp A_0\txy\hat{w}_0\txy   \dy \Big) \ \theta_0(x)  \dx  
\\\label{eq:WeakLimitTransformed3}
=
-\int\limits_{\Omega}   \intYp \div_y \big(A_0\txy \hat{v}_\Gamma\txy \big)  \dy  \ \theta_0(x)  \dx 
\end{align}
for every
$(\varphi, \theta_0,\theta_1) \in \HYperN \times H^1_0(\Omega) \times L^2(\Omega;L^2_0(\Yp))$.

\begin{defi}
Let $p,q \in (1,\infty)$ with $\frac{1}{p} + \frac{1}{q}=1$. We say that a sequence $u_\e$ two-scale converges weakly to $u_0 \in L^p(\Omega \times Y)$ if
\begin{align*}
\lim\limits_{\e\to 0}\intO u_\e(x) \varphi \xxeps dx = \intOY u_0\xy \varphi\xy dydx
\end{align*}
for every $\varphi \in L^q(\Omega; C_\#(Y))$.
If additionally  $		\limEpsNull \norm{v_\e}{L^p(\Omega)} = \norm{v_0}{L^p(\Omega \times Y)}$, we say $u_\e$ two-scale converges strongly to $u_0$.
\end{defi}
The following theorem is one of the fundamental compactness results in the notion of two-scale convergence.
\begin{theorem}\label{thm:T-SC-LpComp}
Let $p \in (1,\infty)$ and let $u_\e$ be a bounded sequence in $L^p(\Omega)$. Then, there exists a subsequence and $u_0 \in L^p(\Omega \times Y)$ such that this subsequence two-scale converges weakly to $u_0$.
\end{theorem}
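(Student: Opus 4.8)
The plan is to realise the two-scale limit as the Riesz representative of a weak-$*$ limit of a bounded sequence of linear functionals, following the classical argument of Nguetseng and Allaire adapted to the exponent $p$.

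First I would fix $q \in (1,\infty)$ with $\frac1p + \frac1q = 1$ and, for each $\e$, introduce the linear functional
\[
L_\e : L^q(\Omega; C_\#(Y)) \to \R, \qquad L_\e(\varphi) \coloneqq \intO u_\e(x)\,\varphi\xxeps \dx .
\]
For $\varphi \in C(\overline{\Omega}; C_\#(Y))$ the function $x \mapsto \varphi\xxeps$ is measurable and $\norm{\varphi(\cdot,\cdot/\e)}{L^q(\Omega)} \le \norm{\varphi}{L^q(\Omega; C_\#(Y))}$, and both facts persist for general $\varphi \in L^q(\Omega; C_\#(Y))$ by density of continuous functions. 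Hölder's inequality then gives $|L_\e(\varphi)| \le \norm{u_\e}{L^p(\Omega)}\,\norm{\varphi}{L^q(\Omega; C_\#(Y))} \le C\,\norm{\varphi}{L^q(\Omega; C_\#(Y))}$ with $C$ the uniform bound on $\norm{u_\e}{L^p(\Omega)}$, so $(L_\e)_\e$ is bounded in the dual of $L^q(\Omega; C_\#(Y))$.

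Since $L^q(\Omega; C_\#(Y))$ is a separable Banach space, the sequential Banach–Alaoglu theorem yields a subsequence (not relabelled) and a bounded functional $L$ with $L_\e(\varphi) \to L(\varphi)$ for every $\varphi$. To identify $L$ I would invoke the mean-value (oscillation) lemma: for admissible test functions, $\intO |\varphi\xxeps|^q \dx \to \intOY |\varphi\xy|^q \dy\dx$, whence $\limEpsNull \norm{\varphi(\cdot,\cdot/\e)}{L^q(\Omega)} = \norm{\varphi}{L^q(\Omega \times Y)}$ and therefore $|L(\varphi)| = \lim_\e |L_\e(\varphi)| \le C\,\norm{\varphi}{L^q(\Omega \times Y)}$. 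Because $L^q(\Omega; C_\#(Y))$ is dense in $L^q(\Omega \times Y)$, $L$ extends to a bounded linear functional on $L^q(\Omega \times Y)$, and since $q \in (1,\infty)$ the Riesz representation theorem provides $u_0 \in L^p(\Omega \times Y)$ with $L(\varphi) = \intOY u_0\xy\,\varphi\xy \dy\dx$. Restricting back to $\varphi \in L^q(\Omega; C_\#(Y))$ gives exactly $\limEpsNull \intO u_\e(x)\,\varphi\xxeps \dx = \intOY u_0\xy\,\varphi\xy \dy\dx$, i.e.\ the subsequence two-scale converges weakly to $u_0$.

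The only non-soft ingredient is the oscillation lemma used to bound $L$ by the $L^q(\Omega \times Y)$-norm; everything else is standard functional analysis (separability of $L^q(\Omega; C_\#(Y))$, sequential Banach–Alaoglu, density of continuous functions, Riesz representation for $L^q$ with $q$ finite). I would also be mildly careful that the admissible class $L^q(\Omega; C_\#(Y))$ is exactly what makes $x \mapsto \varphi\xxeps$ measurable with the stated norm control — this is precisely the point at which continuity and periodicity in the fast variable are used.
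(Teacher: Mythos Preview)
Your argument is the classical Nguetseng--Allaire proof and it is correct as written. Note, however, that the paper does not actually prove this theorem: it is stated as a known fundamental compactness result in the two-scale convergence framework (with references to \cite{All92} and \cite{LNW02}) and then simply used. So there is no proof in the paper to compare against; your proposal reproduces the standard proof from the literature that the paper is citing.
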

The following result allows us to handle the coefficients in the homogenisation.
\begin{lemma}
Let $1 <p,q,r< \infty$ with $\frac{1}{p}+ \frac{1}{q} = \frac{1}{r}$. Let $u_\e$ be a sequence in $L^p(\Omega)$ which two-scale converges strongly to $u_0 \in L^p(\Omega \times Y)$ and let $v_\e$ be a sequence in $L^q(\Omega)$ which  two-scale converges weakly (resp.~strongly) to $v_0 \in L^q(\Omega \times Y)$. Then, $u_\e v_\e$ is a sequence of functions in $L^r(\Omega)$ which two-scale converges weakly (resp.~strongly) to $u_0 v_0 \in L^r(\Omega \times Y)$.
\end{lemma}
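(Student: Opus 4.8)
The plan is to reduce the weak (resp.\ strong) statement to the already-available compactness result (Theorem~\ref{thm:T-SC-LpComp}) together with a density argument for the admissible test functions. First I would check the integrability: by Hölder's inequality with exponents $p/r$ and $q/r$, the product $u_\e v_\e$ lies in $L^r(\Omega)$ with $\norm{u_\e v_\e}{L^r(\Omega)} \leq \norm{u_\e}{L^p(\Omega)}\norm{v_\e}{L^q(\Omega)}$, and both factors on the right are bounded (the strongly convergent sequence $u_\e$ is bounded because its norm converges, the weakly convergent sequence $v_\e$ is bounded by the uniform boundedness principle applied to the defining functionals, or simply by assumption). Hence, by Theorem~\ref{thm:T-SC-LpComp} with exponent $r$, some subsequence of $u_\e v_\e$ two-scale converges weakly to a limit $W_0 \in L^r(\Omega\times Y)$; it suffices to identify $W_0 = u_0 v_0$, since then the whole sequence converges by the usual subsequence argument.

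To identify the limit, fix a test function $\varphi \in L^{r'}(\Omega;C_\#(Y))$, where $\tfrac{1}{r}+\tfrac{1}{r'}=1$. I would first treat the case of a \emph{smooth} reference profile for $u_0$: suppose $\varphi$ is such that $u_0\varphi$ can itself be used as an admissible test function against $v_\e$. Concretely, since $u_\e$ two-scale converges \emph{strongly}, one has the ``unfolding'' interpretation that $u_\e(x)$ behaves like $u_0\xxeps$ in the $L^p$ sense; the key computational step is the splitting
\begin{align*}
\intO u_\e(x) v_\e(x)\,\varphi\xxeps\,dx
= \intO v_\e(x)\,u_0\xxeps\varphi\xxeps\,dx
+ \intO v_\e(x)\bigl(u_\e(x)-u_0\xxeps\bigr)\varphi\xxeps\,dx.
\end{align*}
The second term is bounded by $\norm{v_\e}{L^q(\Omega)}\,\norm{u_\e - u_0(\cdot,\cdot/\e)}{L^p(\Omega)}\,\norm{\varphi}{L^\infty}$, and the middle factor tends to $0$ because strong two-scale convergence of $u_\e$ to a sufficiently regular $u_0$ is equivalent to $\norm{u_\e - u_0\xxeps}{L^p(\Omega)} \to 0$ (this is the standard characterisation for admissible $u_0$, e.g.\ $u_0 \in L^p(\Omega;C_\#(Y))$). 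For the first term, $u_0\varphi \in L^{q'}(\Omega;C_\#(Y))$ since $\tfrac{1}{p}+\tfrac{1}{r'} = \tfrac{1}{q'}$, so it is an admissible test function for the weak two-scale convergence of $v_\e$, and that term converges to $\intOY u_0 v_0\,\varphi\,dy\,dx$. Combining, $\intO u_\e v_\e\,\varphi\xxeps\,dx \to \intOY u_0 v_0 \varphi\,dy\,dx$, which identifies $W_0 = u_0 v_0$ for this class of test functions, and in fact for general $u_0 \in L^p(\Omega\times Y)$ by approximating $u_0$ in $L^p(\Omega\times Y)$ by functions in $L^p(\Omega;C_\#(Y))$ and passing to the limit, using that $\norm{u_\e v_\e}{L^r(\Omega)}$ stays bounded. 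The strong case is then obtained by additionally noting $\norm{u_\e v_\e}{L^r(\Omega)} \to \norm{u_0 v_0}{L^r(\Omega\times Y)}$: when $v_\e$ converges strongly, one applies the product rule to $\abs{u_\e}^r\abs{v_\e}^r$-type quantities, or more cleanly uses that strong two-scale convergence of $v_\e$ plus strong two-scale convergence of $u_\e$ upgrades the norm convergence via the identity for norms under unfolding.

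The main obstacle I expect is the justification of $\norm{u_\e - u_0\xxeps}{L^p(\Omega)} \to 0$ and the density reduction: strong two-scale convergence as defined here is phrased only via the norm equality $\norm{u_\e}{L^p(\Omega)} \to \norm{u_0}{L^p(\Omega\times Y)}$, so one must first establish the equivalence with the ``strong approximation'' property for nice $u_0$, and then handle general $u_0$ by a careful diagonal/density argument controlling the error uniformly in $\e$. All other ingredients --- Hölder, boundedness of the sequences, admissibility of the product test function $u_0\varphi$ --- are routine bookkeeping with conjugate exponents.
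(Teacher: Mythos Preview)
The paper does not give a proof of this lemma; it is stated as a known tool from two-scale convergence theory (the cited references \cite{All92} and \cite{LNW02} contain the relevant arguments). Your proposal follows exactly the standard route found in that literature: H\"older to get $L^r$-boundedness, compactness (Theorem~\ref{thm:T-SC-LpComp}) to extract a limit, identification of the limit via the splitting against an admissible approximation $u_0\xxeps$, and a density argument for general $u_0\in L^p(\Omega\times Y)$. This is correct.

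The one point you flag as an obstacle --- that strong two-scale convergence is defined through the norm identity and one first needs the equivalence with $\norm{u_\e-u_0\xxeps}{L^p(\Omega)}\to 0$ for admissible $u_0$ --- is genuine but is also a standard lemma in the cited references: for $u_0\in L^p(\Omega;C_\#(Y))$ (an admissible test function for itself) one has $\norm{u_0\xxeps}{L^p(\Omega)}\to\norm{u_0}{L^p(\Omega\times Y)}$, and then weak two-scale convergence of $u_\e$ to $u_0$ together with $\norm{u_\e}{L^p(\Omega)}\to\norm{u_0}{L^p(\Omega\times Y)}$ yields the approximation property by expanding $\norm{u_\e-u_0\xxeps}{L^p(\Omega)}^p$ (for $p=2$ this is immediate; for general $p$ one uses uniform convexity or, as you suggest, the unfolding operator, which turns the statement into ordinary strong $L^p(\Omega\times Y)$-convergence and makes both the weak and the strong product statements routine). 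So there is no gap, only a reference to be filled in.
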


In order to translate the two-scale convergence of the data and the solution between the actual and the transformed configuration, we use the following transformation result of \cite{Wie21}.
\begin{theorem}\label{thm:Tsc-Trafo}
Let $p \in (1,\infty)$ and $t\in S$. Let $u_\e$ be a sequence in $L^p(\Omega)$ and $\hat{u}_\e = u_\e \circ \psi_\e(t)$ with $\psi_\e$ as in Assumption \ref{ass:psi}. Then, for a.e.~$t\in S$, $u_\e$ two-scale converges weakly with respect to the $L^p$-norm to $u_0 \in L^p(\Omega \times Y)$ if and only if $\hat{u}_\e$ two-scale converges weakly with respect to the $L^p$-norm to $\hat{u}_0 \in L^p(\Omega \times Y)$. Moreover, $\hat{u}_0(\cdot_x, \cdot_y)= u_0(\cdot_x, \psi_0(\cdot_x\cdot_y))$ holds and, equivalently, $u_0 = \hat{u}_0(\cdot_x, \psi_0^{-1}(t,\cdot_x\cdot_y))$.
\end{theorem}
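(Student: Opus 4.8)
The statement to be proved is Theorem~\ref{thm:Tsc-Trafo}, the transformation compatibility of two-scale convergence, taken from \cite{Wie21}. The natural approach is to unwind the definition of weak two-scale convergence on both sides and to change variables with $\psi_\e(t)$ in the defining integral. Fix $t \in S$ for which Assumption~\ref{ass:psi} holds (which is a.e.~$t$). Given a test function $\varphi \in L^q(\Omega; C_\#(Y))$, I would write
\[
\intO \hat u_\e(x)\, \varphi\xxeps \dx = \intO u_\e(\psi_\e(t,x))\, \varphi\xxeps \dx
\]
and substitute $z = \psi_\e(t,x)$, i.e.~$x = \psi_\e^{-1}(t,z)$, picking up the Jacobian factor $\det D\psi_\e^{-1}(t,z) = J_\e^{-1}(t,\psi_\e^{-1}(t,z))$. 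This turns the integral into $\intO u_\e(z)\, \Phi_\e(z) \dz$ with $\Phi_\e(z) = J_\e^{-1}(t,\psi_\e^{-1}(t,z))\,\varphi\big(\psi_\e^{-1}(t,z), \tfrac{\psi_\e^{-1}(t,z)}{\e}\big)$. The crux is then to show $\Phi_\e$ is (strongly) two-scale convergent to an admissible limit test function, so that one may pass to the limit using weak two-scale convergence of $u_\e$, and to identify that limit.

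\emph{Key steps.} First, I would decompose the admissible test function. Since $\psi_\e$ is a locally periodic perturbation of the identity (Assumption~\ref{ass:psi}(4): $\check\psi_\e(t,x) = \e\,\check\psi_0(t,x,\tfrac{x}{\e}) + o(\e)$ in the appropriate sense), one has $\psi_\e^{-1}(t,z) = z - \e\,\check\psi_0(t,z,\tfrac{z}{\e}) + o(\e)$, so $\psi_\e^{-1}(t,z) \to z$ uniformly and, more importantly, $\tfrac{\psi_\e^{-1}(t,z)}{\e} = \tfrac{z}{\e} - \check\psi_0(t,z,\tfrac{z}{\e}) + o(1)$. The map $y \mapsto y - \check\psi_0(t,x,y) = \psi_0^{-1}(t,x,y)$ is exactly the inverse of the cell transformation, and it is $Y$-periodic in its argument because $\check\psi_0$ is. Therefore $\varphi\big(\psi_\e^{-1}(t,z), \tfrac{\psi_\e^{-1}(t,z)}{\e}\big)$ behaves asymptotically like $\big(\varphi \circ (\mathrm{id}, \psi_0^{-1}(t,\cdot,\cdot))\big)\xxeps$, and the Jacobian prefactor converges two-scale strongly to $J_0^{-1}(t,x,y) = \det(D_y\psi_0^{-1}(t,x,y))\cdot(\text{corrections})$; more cleanly, one notes $J_\e^{-1}(t,\psi_\e^{-1}(t,z))$ is an admissible (smooth, locally periodic) coefficient with two-scale strong limit, say $j_0(t,x,y)$. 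Hence $\Phi_\e$ two-scale converges strongly to $\Phi_0(x,y) := j_0(t,x,y)\,\varphi(x, \psi_0^{-1}(t,x,y))$, which lies in $L^q(\Omega; C_\#(Y))$ because $\psi_0^{-1}(t,x,\cdot)$ is a $C^2$-diffeomorphism of $\overline Y$ and $\check\psi_0$ is $Y$-periodic. Second, combining weak two-scale convergence $u_\e \rightharpoonup u_0$ with strong two-scale convergence $\Phi_\e \to \Phi_0$ (the product lemma quoted just before the theorem), one gets
\[
\lim_{\e\to 0}\intO \hat u_\e(x)\,\varphi\xxeps\dx = \intOY u_0(x,y)\,j_0(t,x,y)\,\varphi(x,\psi_0^{-1}(t,x,y)) \dy\dx.
\]
Third, change variables $y \mapsto \psi_0(t,x,y)$ in the inner integral: the Jacobian $\det D_y\psi_0(t,x,y)$ exactly cancels against $j_0$ (this is the $\e\to 0$ shadow of the cancellation $\det D\psi_\e^{-1} \cdot \det D\psi_\e = 1$), yielding $\intOY u_0(x,\psi_0(t,x,y))\,\varphi(x,y)\dy\dx$. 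Reading off the limit against arbitrary $\varphi$ identifies $\hat u_0(x,y) = u_0(x,\psi_0(t,x,y))$, which is the claimed formula; the converse direction follows by symmetry, applying the same argument to $\psi_\e^{-1}$ in place of $\psi_\e$ (whose cell limit is $\psi_0^{-1}$). Boundedness of $\hat u_\e$ in $L^p(\Omega)$, needed to extract a two-scale limit in the first place, follows from the change of variables and the uniform bounds $c_J \le J_\e \le C$.

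\emph{Main obstacle.} The delicate point is the rigorous justification that $\Phi_\e$ — and in particular the rapidly oscillating composition $\varphi\big(\psi_\e^{-1}(t,z), \tfrac{\psi_\e^{-1}(t,z)}{\e}\big)$ — two-scale converges \emph{strongly} to the stated limit. One cannot simply invoke continuity of $\varphi$ in the fast variable naively, since the argument $\tfrac{\psi_\e^{-1}(t,z)}{\e}$ differs from $\tfrac{z}{\e}$ by an $O(1)$ (not $o(1)$) quantity $\check\psi_0(t,z,\tfrac{z}{\e})$; one must genuinely use that this $O(1)$ shift is itself of the locally periodic two-scale form and that $\varphi(x,\cdot)$ is continuous and $Y$-periodic, together with the strong two-scale convergence of $\e^{|\alpha|-1}D_{x_\alpha}\check\psi_\e$ from Assumption~\ref{ass:psi}(4c) to control the error terms uniformly. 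This is precisely the technical core carried out in \cite{Wie21}, so in the present paper I would state the reduction to the definition, perform the two changes of variables to exhibit the Jacobian cancellation, and cite \cite{Wie21} for the strong two-scale convergence of the transformed test function, noting that the diffeomorphism and periodicity properties in Assumption~\ref{ass:psi} are exactly what that result requires.
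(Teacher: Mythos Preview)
Your proposal is correct and, in fact, far more detailed than what the paper itself provides: the paper's proof of this theorem consists entirely of a reference to \cite{Wie21}, together with the remark that the result there is stated for transformations defined only on $\Omega_\e\subset\Omega$ (hence requiring extensions by zero), but applies verbatim in the case $\Omega_\e=\Omega$ relevant here. Your sketch --- change variables with $\psi_\e(t)$, identify the transformed test function $\Phi_\e$, establish its strong two-scale convergence to $J_0^{-1}(t,x,\psi_0^{-1}(t,x,y))\,\varphi(x,\psi_0^{-1}(t,x,y))$, pair with the weak two-scale limit of $u_\e$, and undo the Jacobian via the cell change of variables $y\mapsto\psi_0(t,x,y)$ --- is precisely the argument carried out in \cite{Wie21}, and your identification of the delicate step (the $O(1)$ shift $\check\psi_0$ in the fast argument of $\varphi$) is exactly right. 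One small correction: the map $y\mapsto y-\check\psi_0(t,x,y)$ is not literally $\psi_0^{-1}(t,x,y)$ (since $\psi_0(t,x,y)=y+\check\psi_0(t,x,y)$, its inverse satisfies $\psi_0^{-1}=y-\check\psi_0(t,x,\psi_0^{-1})$), so the identification requires one more iteration; this is absorbed into the strong two-scale convergence argument you defer to \cite{Wie21}.
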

\begin{proof}
For a proof of Theorem \ref{thm:Tsc-Trafo} see \cite{Wie21}. Note, that there the deformations $\psi_\e$ are only defined on $\Omega_\e \subset \Omega$, which is why the transformation result has to deal with the extension of the functions by $0$. However, the results there holds for $\Omega_\e = \Omega$, which proves Theorem \ref{thm:Tsc-Trafo}.
\end{proof}
Further two-scale transformation results for weakly differentiable functions can be found in \cite{Wie21}.
Moreover, the following result of \cite{Wie21} allows to transforms the strong two-scale convergence.
\begin{theorem}\label{thm:Tsc-TrafoStrong}
Let $p \in (1,\infty)$ and $t \in S$. Let $u_\e$ be a sequence in $L^p(\Omega)$ and $u_0\in L^p(\Omega\times Y))$ such that $u_\e$ two-scale converges weakly to $u_0$. 
Let $\hat{u}_\e = u_\e \circ \psi_\e(t)$ be a sequence in $L^p(\Omega)$ with $\psi_\e$ as in Assumption \ref{ass:psi} such that $\hat{u}_\e$ two-scale converges weakly to $\hat{u}_0 \in L^p(\Omega\times Y)$ with $\hat{u}_0 = u_{0}(\cdot_x, \psi_0(t,\cdot_x,\cdot_y)$. Then, for a.e.~$t\in S$, the following statements hold:
\begin{enumerate}
\item If $u_\e$ two-scale converges strongly to $u_0$ with respect to the $L^p$-norm, then $\hat{u}_\e$ two-scale converges strongly to $\hat{u}_0$ with respect to every $L^{p'}$-norm for $p' \in (1,p)$,
\item If $\hat{u}_\e$ two-scale converges strongly to $\hat{u}_0$ with respect to the $L^p$-norm, then $u_\e$ two-scale converges strongly to $u_0$ with respect to every $L^{p'}$-norm for $p' \in (1,p)$.
\end{enumerate}
\end{theorem}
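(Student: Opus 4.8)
The plan is to reduce the statement to a convergence of $L^{p'}$-norms and then to transfer that convergence between $u_\e$ and $\hat u_\e$ by the change of variables $y=\psi_\e(t,x)$. Only the norm convergence requires work: since $\Omega$ is bounded, the assumed weak two-scale convergence of $u_\e$ to $u_0$ and of $\hat u_\e$ to $\hat u_0$ with respect to the $L^p$-norm, together with the uniform $L^p(\Omega)$-bounds and Theorem \ref{thm:T-SC-LpComp}, already gives weak two-scale convergence of both sequences with respect to every $L^{p'}$-norm, $p'\in(1,p)$, to the same limits (any $L^{p'}$-two-scale cluster point must agree with the $L^p$-limit on the dense set of test functions $\varphi\in C^\infty_c(\Omega;C^\infty_\#(Y))$). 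Hence it suffices to show, for a.e.~$t\in S$ and each fixed $p'\in(1,p)$, that $\int_\Omega|\hat u_\e(t,x)|^{p'}\,dx\to\int_\Omega\int_Y|\hat u_0(t,x,y)|^{p'}\,dy\,dx$ in case (1); case (2) then follows from (1) applied to the sequence $\hat u_\e$ and to the deformations $\psi_\e^{-1}(t,\cdot)$, which satisfy Assumption \ref{ass:psi} with $\psi_0$ replaced by $y\mapsto\psi_0^{-1}(t,x,y)$ (cf.~\cite{Wie21}), using $u_\e=\hat u_\e\circ\psi_\e^{-1}(t,\cdot)$ and $u_0=\hat u_0(\cdot_x,\psi_0^{-1}(t,\cdot_x,\cdot_y))$.

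To establish the norm convergence, fix $p'\in(1,p)$ and a.e.~$t\in S$ and change variables $y=\psi_\e(t,x)$:
\[
\int_\Omega|\hat u_\e(t,x)|^{p'}\,dx=\int_\Omega|u_\e(t,y)|^{p'}\,\rho_\e(t,y)\,dy,\qquad \rho_\e(t,\cdot):=\big(J_\e(t,\cdot)^{-1}\big)\circ\psi_\e^{-1}(t,\cdot).
\]
By Assumption \ref{ass:psi} and Lemma \ref{lemma:EstimatesPsi} the $\rho_\e$ are bounded in $L^\infty(\Omega)$ and bounded below by a positive constant; moreover, writing $\Psi_\e=\1+D\check\psi_\e$ and using that $\e^{|\alpha|-1}D_{x_\alpha}\check\psi_\e(t)$ two-scale converges strongly to $D_{y_\alpha}\check\psi_0(t)$, the determinant $J_\e=\det\Psi_\e$ two-scale converges strongly to $J_0$, hence so does $J_\e^{-1}$ to $J_0^{-1}$ (products of uniformly bounded, strongly two-scale convergent sequences are strongly two-scale convergent, and $J_\e\geq c_J$). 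Applying the transformation result Theorem \ref{thm:Tsc-Trafo} to the sequence $J_\e^{-1}$ and the deformations $\psi_\e^{-1}(t,\cdot)$ identifies the (weak) two-scale limit of $\rho_\e$ as $\rho_0(t,x,y):=J_0(t,x,\psi_0^{-1}(t,x,y))^{-1}$.

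On the other hand, because $s\mapsto|s|^{p'}$ maps $L^p$ continuously into $L^{p/p'}$ and $p/p'>1$, the strong two-scale convergence of $u_\e$ to $u_0$ in $L^p$ propagates to strong two-scale convergence of $|u_\e|^{p'}$ to $|u_0|^{p'}$ in $L^{p/p'}$ — this is the only point at which the restriction $p'<p$ is used. Combining this with the weakly two-scale convergent bounded factor $\rho_\e$, the product rule for two-scale convergence recalled above shows that $|u_\e|^{p'}\rho_\e$ two-scale converges weakly to $|u_0|^{p'}\rho_0$ in some $L^{r}(\Omega)$ with $r\in(1,p/p']$; testing against the constant $1\in L^{r'}(\Omega;C_\#(Y))$ yields
\[
\int_\Omega|u_\e(t,\cdot)|^{p'}\rho_\e(t,\cdot)\,dx\ \longrightarrow\ \int_\Omega\int_Y|u_0(t,x,y)|^{p'}\rho_0(t,x,y)\,dy\,dx.
\]
The substitution $z=\psi_0^{-1}(t,x,y)$ in the inner integral has Jacobian $J_0(t,x,z)$, which cancels $\rho_0$, so the right-hand side equals $\int_\Omega\int_Y|u_0(t,x,\psi_0(t,x,z))|^{p'}\,dz\,dx=\int_\Omega\int_Y|\hat u_0(t,x,z)|^{p'}\,dz\,dx$, i.e.~$\norm{\hat u_0(t)}{\Omega\times Y}^{p'}$ with the $L^{p'}$-norm; this is the required norm convergence and finishes (1).

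The main obstacle is the propagation of \emph{strong} two-scale convergence through the nonlinear superposition $s\mapsto|s|^{p'}$, which is what forces the small loss of integrability and is cleanest to justify via the unfolding-operator description of strong two-scale convergence (where it becomes continuity of a Nemytskii operator $L^p(\Omega\times Y)\to L^{p/p'}(\Omega\times Y)$), and — to a lesser extent — the careful identification of the two-scale limit of the composed Jacobian density $\rho_\e=(J_\e^{-1})\circ\psi_\e^{-1}$, for which the structural part of Assumption \ref{ass:psi} on $\psi_0$ and the transformation results of \cite{Wie21} are essential; the remaining steps are routine changes of variables.
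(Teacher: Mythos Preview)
Your argument is correct. The paper does not give an independent proof of this theorem; it simply states that the proof ``follows in the same way as Theorem~\ref{thm:Tsc-Trafo} from the results of \cite{Wie21}''. Your write-up supplies a complete, self-contained argument along the natural lines one would expect such a proof to take: reduce strong two-scale convergence to weak convergence plus norm convergence, then transfer the $L^{p'}$-norm via the change of variables $y=\psi_\e(t,x)$, invoking the product rule (strong $\times$ weak $\to$ weak) for two-scale convergence with the factors $|u_\e|^{p'}$ (strongly two-scale convergent in $L^{p/p'}$ by the Nemytskii/unfolding argument, which is precisely where $p'<p$ enters) and $\rho_\e=J_\e^{-1}\circ\psi_\e^{-1}$ (whose weak two-scale limit is identified through Theorem~\ref{thm:Tsc-Trafo}).

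Two small remarks. First, when you identify the two-scale limit of $\rho_\e$ you do not actually need to pass to the inverse deformations: since $\rho_\e\circ\psi_\e=J_\e^{-1}$, Theorem~\ref{thm:Tsc-Trafo} applied with the original $\psi_\e$ already gives the weak two-scale limit $\rho_0=J_0^{-1}(\cdot_x,\psi_0^{-1}(t,\cdot_x,\cdot_y))$ directly, so you can avoid invoking that $\psi_\e^{-1}$ satisfies Assumption~\ref{ass:psi} at this point. Second, for the symmetry argument in part (2) you do rely on $\psi_\e^{-1}$ satisfying an analogue of Assumption~\ref{ass:psi}; this is indeed established in \cite{Wie21}, but it is worth being explicit that the displacement estimates $\e^{i-1}\|\check\psi_\e^{-1}\|_{C^i}\le C$ require a short (though standard) argument from the corresponding bounds on $\check\psi_\e$ and the uniform lower bound on $J_\e$.
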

\begin{proof}
The proof of Theorem \ref{thm:Tsc-TrafoStrong} follows in the same way as Theorem \ref{thm:Tsc-Trafo} from the results of \cite{Wie21}.
\end{proof}

\subsection{Two-scale convergence of the transformed data}
\begin{lemma}
Let $\hat{f}_\e, \hat{p}_{b,\e}$ and $\hat{v}_\Ge$ be defined by \eqref{eq:TrafoData}. Then, 
\begin{align*}
&\hat{f}_\e(t) \to f(t) &&\textrm{weakly in the two-scale sense},
\\
&\nabla \hat{p}_{b,\e}(t) \to \nabla_x p_{b}(t) + \nabla_y \hat{p}_{b,1}  &&\textrm{weakly in the two-scale sense},
\\
&\frac{1}{\e}\hat{v}_{\Ge}(t) \to \hat{v}_{\Gamma}(t)  &&\textrm{weakly in the two-scale sense},
\\
&\nabla \hat{v}_{\Ge}(t) \to \nabla_y \hat{v}_{\Gamma}(t)  &&\textrm{weakly in the two-scale sense}
\end{align*}
for a.e.~$t\in S$, where $f$, $p_b$ and $v_\Gamma$ are the two-scale limits given in Assumption~\ref{ass:Data}, $\hat{p}_{b,1}(t) = p_{b,1}(t, \cdot_x, \psi_0(t, \cdot_x, \cdot_y)) + \check{\psi}_0 \cdot \nabla_x p_{b}(t)$ and
$\hat{v}_\Gamma(t) = v_\Gamma(t, \cdot_x, \psi_0(t, \cdot_x, \cdot_y))$.
\end{lemma}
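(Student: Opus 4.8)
The plan is to treat each of the four convergences by the same scheme: start from the two-scale convergence of the untransformed quantity given in Assumption~\ref{ass:Data}, apply the transformation result Theorem~\ref{thm:Tsc-Trafo} (and, where needed, Theorem~\ref{thm:Tsc-TrafoStrong} for the strong convergence of the coefficients), and then identify the limit by unravelling how $\psi_0$ and the displacement $\check\psi_0$ enter through the chain rule. First I would record that, by Assumption~\ref{ass:psi}(4) together with the strong two-scale convergence $\e^{|\alpha|-1}D_{x_\alpha}\check\psi_\e(t)\to D_{y_\alpha}\check\psi_0(t)$, the relevant transformation matrices $\Psi_\e(t)$ and $J_\e(t)$, evaluated along $\psi_\e^{-1}(t,\cdot)$, two-scale converge strongly to their limit counterparts built from $\psi_0$; this is exactly the ingredient that lets one pass coefficient products to the limit via the product lemma stated just before the definition of two-scale convergence. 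For $\hat f_\e(t) = f_\e(t,\psi_\e(t,\cdot))$, Theorem~\ref{thm:Tsc-Trafo} applied to the sequence $f_\e(t)$ (which two-scale converges weakly to $f(t)$ by Assumption~\ref{ass:Data}(2)) gives directly that $\hat f_\e(t)$ two-scale converges weakly to $f(t,\cdot_x,\psi_0(t,\cdot_x,\cdot_y))$; this is the claimed limit, with no extra term because no derivative is involved. The same verbatim argument, applied to $\tfrac1\e v_\Ge(t)$ using Assumption~\ref{ass:Data}(6), yields $\tfrac1\e\hat v_\Ge(t)\to v_\Gamma(t,\cdot_x,\psi_0(t,\cdot_x,\cdot_y))=\hat v_\Gamma(t)$.

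For the gradient terms the chain rule produces the extra contributions. Since $\nabla\hat p_{b,\e}(t,x) = \Psi_\e^\top(t,x)\,(\nabla p_{b,\e})(t,\psi_\e(t,x))$, I would split $\Psi_\e^\top = \mathbbm 1 + (D\check\psi_\e)^\top$ and treat the two pieces separately. The $\mathbbm 1$-piece is $(\nabla p_{b,\e})(t,\psi_\e(t,\cdot))$, which by Theorem~\ref{thm:Tsc-Trafo} and Assumption~\ref{ass:Data}(4) two-scale converges weakly to $(\nabla_x p_b(t) + \nabla_y p_{b,1}(t))(\cdot_x,\psi_0(t,\cdot_x,\cdot_y))$; here one observes that the $y$-gradient part, being $Y$-periodic and mean-free, can be absorbed into the $\nabla_y\hat p_{b,1}$ term after composing with $\psi_0$, while the $\nabla_x p_b$ part composed with $\psi_0$ in the slow variable is just $\nabla_x p_b(t)$ evaluated at $x$ (no $y$-dependence). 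The $(D\check\psi_\e)^\top$-piece multiplies $(\nabla p_{b,\e})(t,\psi_\e(t,\cdot))$: here $D\check\psi_\e(t)$ two-scale converges \emph{strongly} to $D_y\check\psi_0(t)$ by Assumption~\ref{ass:psi}(4)(c), so the product lemma gives weak two-scale convergence of the product to $D_y\check\psi_0(t,\cdot_x,\cdot_y)^\top\,(\nabla_x p_b(t))(\cdot_x)$ — the $y$-gradient contribution drops because $D_y\check\psi_0$ times a $Y$-periodic mean-free gradient integrates against test functions to something that, again, gets folded into $\nabla_y\hat p_{b,1}$. Collecting, one checks that $\nabla_y\big(\check\psi_0\cdot\nabla_x p_b(t)\big) = D_y\check\psi_0^\top\,\nabla_x p_b(t)$, which is precisely the definition $\hat p_{b,1}(t) = p_{b,1}(t,\cdot_x,\psi_0(t,\cdot_x,\cdot_y)) + \check\psi_0\cdot\nabla_x p_b(t)$ given in the statement, and the limit is $\nabla_x p_b(t) + \nabla_y\hat p_{b,1}(t)$. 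The identical computation with $p_{b,\e}$ replaced by $v_\Ge$, using Assumption~\ref{ass:Data}(6) and noting that the slow-gradient term is absent there (the limit $v_\Gamma$ has no $\nabla_x$ corrector), gives $\nabla\hat v_\Ge(t)\to\nabla_y\hat v_\Gamma(t)$ with $\hat v_\Gamma(t) = v_\Gamma(t,\cdot_x,\psi_0(t,\cdot_x,\cdot_y))$.

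The main obstacle I anticipate is the bookkeeping in the gradient identifications: one must be careful that composing a two-scale limit of the form $\nabla_x p_b(t,x) + \nabla_y p_{b,1}(t,x,y)$ with the cell diffeomorphism $\psi_0(t,x,\cdot_y)$ in the fast variable, and then applying the chain-rule matrix $\Psi_\e^\top$, reorganises exactly into a pure slow gradient plus a pure periodic fast gradient — i.e.\ that the corrector $\hat p_{b,1}$ as defined is the right object and that no cross terms survive. This is essentially a change-of-variables identity for two-scale limits of gradients, and the cleanest route is to verify it directly on test functions $\varphi\in L^q(\Omega;C_\#(Y))$: pull the test function back through $\psi_0^{-1}$, use that $\psi_0$ is a $C^2$-diffeomorphism of $\overline Y$ onto itself fixing the periodicity (Assumption~\ref{ass:psi}(4)(a)–(b)), integrate by parts in $y$ on the cell, and match terms. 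The strong two-scale convergence of $D\check\psi_\e(t)$ and of $J_\e(t)$, $\Psi_\e(t)$ evaluated along $\psi_\e^{-1}$ — which is what Theorems~\ref{thm:Tsc-Trafo} and \ref{thm:Tsc-TrafoStrong} deliver — is what makes every product pass to the limit, so once the identification identities are pinned down the convergences themselves are routine.
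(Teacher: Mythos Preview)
Your plan is correct and amounts to an explicit unpacking of what the paper does in one line by citing \cite[Theorems~3.9 and~3.10]{Wie21} together with Theorem~\ref{thm:Tsc-Trafo}; for $\hat f_\e$ and $\tfrac1\e\hat v_{\Gamma_\e}$ you and the paper proceed identically, and for the two gradients the paper simply invokes the transformation-of-gradients results from \cite{Wie21}, whose proofs follow the chain-rule-plus-splitting route you sketch.

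One point of caution on your bookkeeping: your verbal description that the $y$-gradient contribution ``drops'' in the $(D\check\psi_\e)^\top$-piece is misleading. Nothing drops; rather, adding the two pieces gives
\[
\nabla_x p_b + (\nabla_y p_{b,1})(\cdot,\psi_0) + (D_y\check\psi_0)^\top\bigl(\nabla_x p_b + (\nabla_y p_{b,1})(\cdot,\psi_0)\bigr)
= \nabla_x p_b + \Psi_0^\top(\nabla_y p_{b,1})(\cdot,\psi_0) + (D_y\check\psi_0)^\top\nabla_x p_b,
\]
and then the chain rule $\nabla_y\bigl(p_{b,1}(\cdot,\psi_0)\bigr)=\Psi_0^\top(\nabla_y p_{b,1})(\cdot,\psi_0)$ together with $\nabla_y(\check\psi_0\cdot\nabla_x p_b)=(D_y\check\psi_0)^\top\nabla_x p_b$ regroups this as $\nabla_x p_b + \nabla_y\hat p_{b,1}$ with the stated $\hat p_{b,1}$. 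So the identification is a straight algebraic identity, not a cancellation against test functions; you do not need the integration-by-parts detour you anticipate. Also, to land in weak two-scale $L^2$ for the product term, pair the strong two-scale convergence of $D\check\psi_\e$ in $L^p$ for all $p<\infty$ with its uniform $L^\infty$ bound from Assumption~\ref{ass:psi}(3): the product is bounded in $L^2$ and converges weakly in every $L^r$ with $r<2$, hence weakly in $L^2$.
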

\begin{proof}
The two-scale convergence of $\hat{f}_\e(t)$ and of $\frac{1}{\e} \hat{v}_{\Ge}$ follows from Theorem \ref{thm:Tsc-Trafo}. The two-scale convergence of $\nabla \hat{p}_{b,\e}(t)$ follows from \cite[Theorem 3.10]{Wie21} and the the two-scale convergence of $\nabla \hat{v}$ from \cite[Theorem 3.9]{Wie21}.
\end{proof}
Moreover, we need the two-scale convergence of the transformation coefficients, which are given by the following result.
\begin{lemma}\label{lemma:TscPsi}
Let $\psi_\e$ and $\psi_0$ be given by Assumption \ref{ass:psi}. Then,
\begin{align*}
&\Psi_\e(t) \to \Psi_0(t), \
\Psi_\e^{-1}(t) \to \Psi_0^{-1}(t), \ J_\e(t) \to J_0(t), J_\e^{-1}(t) \to J_0^{-1}(t), A_\e(t) \to A_0(t),
\\
&\e D_x A_\e(t) \to D_y A_0(t), \ \e D_x A_\e^{-1}(t) \to D_y A_0(t)
\end{align*}
strongly in the two-scale sense, for a.e.~$t\in S$, where the strong two-scale convergence holds with respect to every $L^p$-norm for $p  \in (1,\infty)$ and $A_0 \coloneqq J_0 \Psi_0^{-1}$.
\end{lemma}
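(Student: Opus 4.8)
The plan is to bootstrap the whole statement from Assumption \ref{ass:psi}(4)(c) — the only genuinely analytic ingredient — together with the uniform bounds of Lemma \ref{lemma:EstimatesPsi} and the product lemma for two-scale convergence (the lemma following Theorem \ref{thm:T-SC-LpComp}). First I would record the two base cases. Taking the multiindex $\alpha$ with $|\alpha|=1$ in Assumption \ref{ass:psi}(4)(c) gives $D_x\check\psi_\e(t)\to D_y\check\psi_0(t)$, and with $|\alpha|=2$ it gives $\e\, D_x^2\check\psi_\e(t)\to D_y^2\check\psi_0(t)$, both strongly in the two-scale sense with respect to every $L^p$-norm, $p\in(1,\infty)$. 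Since $\Psi_\e=\1+D_x\check\psi_\e$ and $\Psi_0=\1+D_y\check\psi_0$, and the limits $D_y\check\psi_0$, $D_y^2\check\psi_0$ are admissible test functions (continuous and $Y$-periodic in $y$, essentially bounded in $x$, by Assumption \ref{ass:psi}(4)(b)), strong two-scale convergence here is just $L^p(\Omega)$-convergence of the sequence towards its oscillating limit, so subtracting the constant $\1$ is harmless: $\Psi_\e(t)=\1+D_x\check\psi_\e(t)\to\1+D_y\check\psi_0(t)=\Psi_0(t)$ and, differentiating once more in $x$, $\e\, D_x\Psi_\e(t)=\e\, D_x^2\check\psi_\e(t)\to D_y^2\check\psi_0(t)=D_y\Psi_0(t)$, both strongly in the two-scale sense for every $L^p$-norm. (Equivalently one may argue via the unfolding operator $\Te$, which commutes with all the algebraic and differential operations below.)

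Next I would treat the algebraic combinations. Since $J_\e=\det\Psi_\e$ and $A_\e=\operatorname{adj}(\Psi_\e)$ (equivalently $A_\e=J_\e\Psi_\e^{-1}$), the entries of $J_\e$ and $A_\e$ are fixed polynomials in the entries of $\Psi_\e$, and all these quantities are bounded in $L^\infty$ uniformly in $\e$ by Lemma \ref{lemma:EstimatesPsi}. A repeated application of the product lemma — a product of finitely many $L^\infty$-bounded sequences which two-scale converge strongly in every $L^p$ again two-scale converges strongly in every $L^p$, to the product of the limits — then gives, entrywise and hence after summation, $J_\e(t)\to\det\Psi_0(t)=J_0(t)$ and $A_\e(t)\to\operatorname{adj}(\Psi_0(t))=J_0(t)\Psi_0^{-1}(t)=A_0(t)$ strongly in the two-scale sense for every $L^p$-norm.

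The inverse quantities require the only nonlinear step. By Assumption \ref{ass:psi}(2) and Lemma \ref{lemma:EstimatesPsi} one has $c_J\le J_\e\le C$ uniformly in $\e$, so $J_\e^{-1}$ is bounded in $L^\infty$ and, by Theorem \ref{thm:T-SC-LpComp}, has a weak two-scale limit $h$ along a subsequence. Because $J_\e^{-1}J_\e\equiv 1$ two-scale converges to the constant $1$ while $J_\e\to J_0$ strongly, the product lemma forces $h\,J_0=1$, i.e.\ $h=J_0^{-1}$; since the limit is thereby identified, the full sequence $J_\e^{-1}(t)$ two-scale converges weakly to $J_0^{-1}(t)$. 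Running exactly the same argument for the $L^\infty$-bounded sequence $J_\e^{-p}$ (using $J_\e^{-p}J_\e^{p}\equiv1$ and $J_\e^{p}\to J_0^{p}$ strongly) and testing against $\varphi\equiv1$ yields $\int_\Omega J_\e^{-p}\,dx\to\int_\Omega\int_Y J_0^{-p}\,dy\,dx$, i.e.\ $\norm{J_\e^{-1}(t)}{L^p(\Omega)}\to\norm{J_0^{-1}(t)}{L^p(\Omega\times Y)}$, so the convergence is in fact strong in the two-scale sense, for every $p\in(1,\infty)$. Then $\Psi_\e^{-1}=J_\e^{-1}A_\e$ and one further application of the product lemma give $\Psi_\e^{-1}(t)\to J_0^{-1}(t)A_0(t)=\Psi_0^{-1}(t)$ strongly in the two-scale sense for every $L^p$-norm.

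Finally I would obtain the $\e$-scaled $x$-derivatives by differentiating the identities above. The product rule applied to the polynomial expressions for $\det\Psi_\e$ and $\operatorname{adj}(\Psi_\e)$ writes each component of $\e\,\partial_{x_i}J_\e$ and $\e\,\partial_{x_i}A_\e$ as a finite sum of products, each product consisting of entries of $\Psi_\e$ and exactly one entry of $\e\,\partial_{x_i}\Psi_\e$; by the first step all these factors two-scale converge strongly in every $L^p$ and are $L^\infty$-bounded (Lemma \ref{lemma:EstimatesPsi}), so the product lemma gives strong two-scale convergence of $\e\,\partial_{x_i}J_\e(t)$ and $\e\,\partial_{x_i}A_\e(t)$ to the same polynomial expressions in $\Psi_0$ and $\partial_{y_i}\Psi_0$ — which, the identities being formal, are exactly $\partial_{y_i}J_0(t)$ and $\partial_{y_i}A_0(t)$. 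In the same way $\e\,\partial_{x_i}J_\e^{-1}=-J_\e^{-2}\,(\e\,\partial_{x_i}J_\e)\to-J_0^{-2}\,\partial_{y_i}J_0=\partial_{y_i}J_0^{-1}$, then $\e\,\partial_{x_i}\Psi_\e^{-1}=(\e\,\partial_{x_i}J_\e^{-1})A_\e+J_\e^{-1}(\e\,\partial_{x_i}A_\e)\to\partial_{y_i}\Psi_0^{-1}$, and likewise $\e\,\partial_{x_i}A_\e^{-1}=(\e\,\partial_{x_i}J_\e^{-1})\Psi_\e+J_\e^{-1}(\e\,\partial_{x_i}\Psi_\e)\to\partial_{y_i}A_0^{-1}$ (using $A_\e^{-1}=J_\e^{-1}\Psi_\e$), all strongly in the two-scale sense for every $L^p$-norm. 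The one point that is not completely routine is this passage through the nonlinearity $s\mapsto s^{-1}$ in the third step, and that is precisely where the two-sided bound $c_J\le J_\e\le C$ together with the identity $J_\e^{-1}J_\e\equiv1$ is used; everything else is bookkeeping with the product rule and the product lemma.
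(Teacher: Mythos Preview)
Your proof is correct and follows essentially the same route as the paper: both arguments reduce everything to the strong two-scale convergence of $\Psi_\e$ and $\e D_x\Psi_\e$ coming from Assumption~\ref{ass:psi}(4)(c), then push this through the polynomial identities for $J_\e$, $A_\e$ and their derivatives via the product lemma, exactly as in the paper's proof (which in turn mirrors the bookkeeping of Lemma~\ref{lemma:EstimatesPsi}). The only difference is that the paper outsources the first line of convergences ($\Psi_\e,\Psi_\e^{-1},J_\e,J_\e^{-1},A_\e$) to \cite{Wie21}, whereas you give a self-contained argument; your handling of the nonlinearity $s\mapsto s^{-1}$ through the identity $J_\e^{-1}J_\e\equiv1$ is a correct way to do this, though the parenthetical unfolding remark you make already gives the shorter route (since $s\mapsto s^{-1}$ is Lipschitz on $[c_J,C]$).
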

\begin{proof}
Using the results of \cite{Wie21} it remains to prove that $\e D_x A_\e(t) \to D_y A_0(t)$ and $\e D_x A_\e^{-1}(t) \to D_y A_0(t)$ strongly in the two-scale sense.
Therefore, we note that $\e D_x \Psi_\e(t) = \e D_x \nabla \psi_\e^\top(t) $ two-scale converges strongly to $D_y \Psi_0(t) = D_y \nabla_y \psi_0^\top(t)$ by Assumption \ref{ass:psi}.

We rewrite $D_x A_\e(t)$ into the sum of polynomials like in the proof of Lemma \ref{lemma:EstimatesPsi}. Then, we pass to the limit $\e \to 0$ using the two-scale convergence of $\e D_x \Psi_\e(t)$ and $\Psi_\e(t)$.

By the same argumentation, we obtain the strong two-scale convergence of $\e D_x J_\e(t)$ to $D_y J_0(t)$.
Then, we rewrite $A_\e^{-1}(t) = J_\e^{-1}(t) \Psi_\e(t)$. Using the quotient rule and the previously proven two-scale convergences, we obtain the strong two-scale convergence of $\e D_x A_\e^{-1}(t)$ to $D_y A_0^{-1}(t)$.
\color{black}
\end{proof}

\subsection{Homogenisation of the transformed Stokes equations}
\begin{theorem}\label{thm:HomogenisationTransformed}
Let $\hat{w}_\e$ and $\hat{q}_\e(t)$ be the solution of \eqref{eq:WeakFormTransformed1}--\eqref{eq:WeakFormTransformed2}. Let $\hat{Q}_\e$ be the extension of $\hat{q}_\e$ as defined in Lemma \ref{lemma:ConvergenceQe} and $\widetilde{\hat{w}_\e} \in L^{p_s}(S;H^1(\Omega))$ be the extension of $\hat{w}_\e$ by $0$ on $\Omega \setminus \Oe$.
Then, $\widetilde{\hat{w}_\e}(t)$ two-scale converges to $\hat{w}_0(t)$  and $\hat{Q}_\e(t)$ converges strongly in $L^2(\Omega)$ to $\hat{q}(t)$, for a.e.~$t\in S$, where $(\hat{w}_0, \hat{q}, \hat{q}_1) \in L^{p_s}(S;\HYperN) \times L^{p_s}(S;H^1_0(\Omega)) \times L^{p_s}(S;L^2(\Omega;L^2_0(\Yp)))$ is the unique solution of \eqref{eq:WeakLimitTransformed1}--\eqref{eq:WeakLimitTransformed3}.
\end{theorem}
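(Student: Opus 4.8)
The plan is to follow the classical two-scale homogenisation scheme for Stokes flow in a perforated domain (in the spirit of Tartar--Allaire), but now with the $\e$-dependent transformation coefficients $A_\e, \Psi_\e, J_\e$ present, using the uniform a priori estimate \eqref{eq:EpsIndependentEstimateVP} as the source of all compactness. First I would extract two-scale limits: by Theorem \ref{thm:T-SC-LpComp} and \eqref{eq:EpsIndependentEstimateVP}, the extension $\widetilde{\hat w_\e}(t)$ and $\e \nabla \hat w_\e(t)$ (suitably extended) converge two-scale along a subsequence, and since $\norm{\hat w_\e(t)}{\Oe}\le C(t)$ while $\e\norm{\nabla\hat w_\e(t)}{\Oe}\le C(t)$, the standard argument (test against $\varphi(x)+\e\varphi_1(x,y)$ with $\div_y$ conditions) shows the limit $\hat w_0(t)$ lies in $\HYperN$, i.e.\ is $Y$-periodic in $y$, vanishes on $Y\setminus\Yp$ and on $\Gamma$, and that $\e\nabla\hat w_\e \to \nabla_y \hat w_0$ two-scale. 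The coefficient sequences $A_\e(t),\Psi_\e^{-\top}(t),J_\e(t)$ converge strongly two-scale to $A_0(t),\Psi_0^{-\top}(t),J_0(t)$ by Lemma \ref{lemma:TscPsi}, so all products pass to the limit by the product lemma. For the pressure I would first establish, via Lemma \ref{lemma:ConvergenceQe} (to which the statement refers), that the extension $\hat Q_\e(t)$ is bounded in $L^2(\Omega)$, hence converges weakly in $L^2(\Omega)$ to some $\hat q(t)$; the zero-mean normalisation plus the structure of \eqref{eq:WeakFormTransformed1} will force $\hat q(t)\in H^1_0(\Omega)$ with a two-scale corrector $\hat q_1(t)\in L^2(\Omega;L^2_0(\Yp))$ (the $\nabla_y$ part of the limiting pressure gradient).

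Next I would pass to the limit in the two equations. In \eqref{eq:WeakFormTransformed2}, testing with $\phi(x)$ and with $\e$-scaled oscillating test functions $\e\phi_1(x,x/\e)$ and using the explicit form of $\div(A_\e v)$ (product rule: $\div(A_\e v) = A_\e:\nabla v + \div(A_\e)\cdot v$, with $\e\,\div(A_\e)$ bounded by Lemma \ref{lemma:EstimatesPsi}), the leading order yields the microscopic incompressibility $\div_y(A_0\hat w_0)=0$, i.e.\ \eqref{eq:WeakLimitTransformed2}, while the next order, after integrating over $\Yp$ and using the transformed data convergence $\frac1\e\hat v_\Ge\to\hat v_\Gamma$ and $\nabla\hat v_\Ge\to\nabla_y\hat v_\Gamma$, gives the macroscopic compressibility condition \eqref{eq:WeakLimitTransformed3}. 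In \eqref{eq:WeakFormTransformed1} I would choose divergence-adapted test functions $\varphi(x) + \e\varphi_1(x,x/\e)$ where the pair is chosen so that $\div_y(A_0\varphi_1) $ compensates the relevant terms (equivalently, restrict first to $\varphi$ with $\div_y(A_0\varphi)=0$ to kill the pressure, then recover $\hat q_1$ by a de Rham / closed-range argument on the space $\{\,\varphi : \div_y(A_0\varphi)=0\,\}^\perp$). The viscous term $\e^2\nu A_\e\,2\hat e_{\e,t}(\hat w_\e):\nabla\varphi$ becomes, to leading order, $\nu A_0\Psi_0^{-\top}\nabla_y\hat w_0 : \nabla_y\varphi$ on $\Omega\times\Yp$; the forcing term $J_\e\hat f_\e - A_\e^\top\nabla\hat p_{b,\e}$ converges to $J_0 f - A_0^\top\nabla_x p_b$ (the $\nabla_y p_{b,1}$ corrector integrates against a $y$-periodic $\varphi$ with $\div_y(A_0\varphi)=0$ to zero, or is absorbed into $\hat q_1$); and the $-a_\e(t)(\hat v_\Ge,\cdot)$ term is $O(\e)$ by \eqref{eq:Estf2} and drops out. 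This produces exactly \eqref{eq:WeakLimitTransformed1}.

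The main obstacle I expect is the pressure: proving that the extended pressure $\hat Q_\e(t)$ is bounded in $L^2(\Omega)$ \emph{uniformly} and that its two-scale limit has the claimed split structure $\nabla_x\hat q + \nabla_y\hat q_1$ with $\hat q$ \emph{independent of $y$} and in $H^1_0(\Omega)$ (the Dirichlet datum coming from $p_b$ on $\partial\Omega$). This is the step where, in the classical theory, one needs the restriction operator $R_\e$ and the dual estimate; here it is handled by the $\div_\e^{-1}$ operator of Lemma \ref{lemma:dive-1} together with Lemma \ref{lemma:ConvergenceQe}, which must be invoked carefully because the transformation coefficient $A_\e$ sits between the divergence and the test function. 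Once the pressure compactness is in hand, identifying $\hat q_1$ as the $\nabla_y$-corrector requires the orthogonal decomposition of $L^2(\Omega;L^2(\Yp))^N$ relative to $\{\nabla_y\pi : \pi\in L^2(\Omega;H^1_{\#}(\Yp)/\R)\}$ twisted by $A_0$, i.e.\ a variant of the de Rham lemma with the matrix weight $A_0$; this is routine given $\det A_0 = J_0^{N-1}\ge c>0$ but needs to be stated. Finally, uniqueness of $(\hat w_0,\hat q,\hat q_1)$ follows because \eqref{eq:WeakLimitTransformed1}--\eqref{eq:WeakLimitTransformed3} is again a saddle-point problem of the type in Lemma \ref{lemma:SaddlePoint} (coercivity of the $y$-viscous form on $\HYperN$ via Korn/Poincaré in $\Yp$ and the uniform ellipticity of $A_0\Psi_0^{-\top}$, inf--sup for the combined $(\hat q,\hat q_1)$ pressure pair via $\div_y$ and $\div_x$), so the whole sequence — not just a subsequence — converges, and the measurability in $t$ transfers from the $\e$-level via Lemma \ref{lemma:ContinuityWithRespectToData} exactly as in the proof of Theorem \ref{thm:ExistenceEps}.
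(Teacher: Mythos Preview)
Your overall architecture---extract two-scale limits from \eqref{eq:EpsIndependentEstimateVP}, use Lemma~\ref{lemma:ConvergenceQe} for the pressure, pass to the limit first in the divergence condition (Lemma~\ref{lemma:HomogenisierungDivergence}) and then in the momentum equation restricted to a divergence-free subspace, recover $\hat q_1$ by de~Rham/closed range, and conclude via uniqueness of the limit saddle-point problem (Theorem~\ref{thm:HomogenisedReferenceExistence})---is exactly the route the paper takes.

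There is, however, one concrete technical step you are missing, and without it your passage to the limit in \eqref{eq:WeakFormTransformed1} does not go through. You propose to test with oscillating $\varphi(x,x/\e)$ subject to $\div_y(A_0\varphi)=0$ ``to kill the pressure''. At the $\e$-level the pressure term is $(\hat q_\e,\div(A_\e\,\varphi(\cdot,\cdot/\e)))_{\Oe}$, and expanding gives a leading contribution
\[
\tfrac{1}{\e}\big(\hat Q_\e,\ \e\,\partial_{x_j}(A_\e)_{ji}\,\varphi_i(\cdot,\tfrac{\cdot}{\e})+(A_\e)_{ji}\,\partial_{y_j}\varphi_i(\cdot,\tfrac{\cdot}{\e})\big)_{\Omega}.
\]
The bracket two-scale converges to $\div_y(A_0\varphi)=0$, but you only have convergence, not an $O(\e)$ rate, so the $1/\e$ factor is fatal; strong $L^2$ convergence of $\hat Q_\e$ does not help because $\|\div(A_\e\varphi)\|_{\Oe}=O(1/\e)$. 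The paper resolves this by first replacing the test function $\varphi$ by $A_\e^{-1}(t)\varphi$ in \eqref{eq:WeakFormTransformed1}, which turns the pressure term into $(\hat Q_\e,\div\varphi)_{\Oe}$ exactly (see \eqref{eq:WeakFormSub}). Then one restricts to $\div_y\varphi=0$ (not $\div_y(A_0\varphi)=0$), so that $\div(\varphi(\cdot,\cdot/\e))=(\div_x\varphi)(\cdot,\cdot/\e)$ is $O(1)$, and the \emph{strong} $L^2$-convergence of $\hat Q_\e$ from Lemma~\ref{lemma:ConvergenceQe} (which you invoked but only used as weak convergence) is precisely what lets the product pass to the limit. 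The same substitution is why the corrector $\nabla_y\hat p_{b,1}$ disappears against $\div_y\varphi=0$ by a clean integration by parts. After the limit equation \eqref{eq:HomogenizedWithP0} is obtained, the paper shows $\hat q\in H^1_0(\Omega)$ by testing with $\varphi=\phi(x)\phi_i(y)$ for particular $\phi_i$ with $\div_y\phi_i=0$ and $\int_{\Yp}\phi_i=e_i$, and only then substitutes back $\varphi\mapsto A_0\varphi$ to reach \eqref{eq:WeakLimitTransformed1}. So the constraint you want at the limit is $\div_y(A_0\varphi)=0$, but the way to get there is via $\div_y\varphi=0$ at the $\e$-level together with the $A_\e^{-1}$ substitution.
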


In order to pass to the limit $\e \to 0$ in \eqref{eq:WeakFormTransformed1}, we test it by $A_\e^{-1}(t)\varphi$ and obtain
\begin{align}\notag
&( \e^2 \nu  A_ \e(t) \hat{e}_{ \e,t}(\hat{w}_ \e(t)), \nabla ( A_ \e^{-1}(t) \varphi))_\Oe 
- (\hat{Q}_ \e(t), \div(\varphi))_\Oe
\\\label{eq:WeakFormSub}
&
= 
(\Psi_\e^{\top}(t) \hat{f}_ \e(t)-  \nabla{\hat{p}}_{b, \e}, \varphi)_\Oe
-
( \e^2 \nu  A_ \e(t) \hat{e}_{ \e,t}(\hat{v}_\Ge(t)), \nabla( A_ \e^{-1}(t) \varphi))_{\Omega_ \e}.
\end{align}
Since $A_\e^{-1}(t)$ is invertible \eqref{eq:WeakFormTransformed1} can be replaced by \eqref{eq:WeakFormSub}.

First, we prove the strong convergence of $\hat{Q}_\e(t)$. Thereto, we transfer the argumentation of \cite{All89} on our weak form with the different function spaces.
\begin{lemma}\label{lemma:ConvergenceQe}
Let $\hat{q}_\e$ be the second part of the solution of \eqref{eq:WeakFormTransformed1}--\eqref{eq:WeakFormTransformed2} and $\hat{Q}_\e$ be the extension of $\hat{q}_\e$ on $\Omega$ defined by
\begin{align}\label{eq:ExtensionQe}
\hat{Q}_\e\tx 
\coloneqq 
\begin{cases}
\hat{q}_\e\tx & \textrm{ if } x \in \Oe,
\\
\frac{1}{|\Yp|} \int\limits_{k + \e \Yp} \hat{q}_\e\tx & \textrm{ if } x \in k + \e \Yp \textrm{ for } k \in I_\e.
\end{cases}
\end{align}
Then, for a.e.~$t\in S$, there exists  $\hat{q}(t) \in L^2(\Omega)$ and a subsequence of $\hat{Q}_\e(t)$ which converges strongly in $L^2(\Omega)$ to $q(t)$.
\end{lemma}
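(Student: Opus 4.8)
The plan is to follow the classical Tartar–Allaire strategy for the pressure, adapted to the transformed weak form \eqref{eq:WeakFormSub} with the coefficient matrices $A_\e$, $\Psi_\e$. First I would record the easy half: by the a priori bound \eqref{eq:EpsIndependentEstimateVP} we have $\norm{\hat q_\e(t)}{\Oe}\leq C(t)$, and the cell-wise averaging in \eqref{eq:ExtensionQe} does not increase the $L^2$-norm (Jensen's inequality on each cell $k+\e\Ys$, together with $|k+\e\Yp|\le|k+\e Y|$), so $\norm{\hat Q_\e(t)}{\Omega}\leq C(t)$ uniformly in $\e$. Hence, for a.e.\ $t$, a subsequence of $\hat Q_\e(t)$ converges \emph{weakly} in $L^2(\Omega)$ to some $\hat q(t)$. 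The whole point is to upgrade this to \emph{strong} convergence, and since $L^2(\Omega)$ is a Hilbert space it suffices to show $\norm{\hat Q_\e(t)}{\Omega}\to\norm{\hat q(t)}{\Omega}$, or equivalently to identify the limit of $(\hat Q_\e(t),\hat Q_\e(t))_\Omega$.

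The main tool is a good test function. Given any $\phi\in L^2_0(\Omega)$ (mean-zero), let $w:=\div^{-1}(\phi)\in H^1(\Omega)^N$ via Lemma \ref{lemma:div-1}, set $\hat v:=R_\e(w)\in H^1_\Ge(\Oe)^N$ using the restriction operator of Lemma \ref{lemma:Re}, and finally put $\varphi:=J_\e^{-1}(t)\Psi_\e(t)\hat v$, exactly the combination already used in the inf--sup estimate \eqref{eq:InfsupBeps}; then $\div(A_\e(t)\varphi)=\div(A_\e(t)J_\e^{-1}\Psi_\e\hat v)=\div(\hat v)$, which by Lemma \ref{lemma:Re}(2) equals $\phi$ plus the cell-wise correction that is precisely what turns $\hat q_\e$ into $\hat Q_\e$. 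Testing \eqref{eq:WeakFormTransformed1} (or \eqref{eq:WeakFormSub}) with this $\varphi$ therefore yields
\begin{align*}
(\hat Q_\e(t),\phi)_\Omega = (\e^2\nu A_\e(t)\hat e_{\e,t}(\hat w_\e(t)),\nabla\varphi)_\Oe - (\text{r.h.s.\ terms})(\varphi),
\end{align*}
and Lemma \ref{lemma:EstimatesPsi}, Lemma \ref{lemma:PoincareEps}, the bound \eqref{eq:EpsIndependentEstimateVP} and Lemma \ref{lemma:EstimatesHatData} give $\norm{\varphi}{V_\e}\le C\norm{\phi}{\Omega}$ while $\e\norm{\nabla\hat w_\e(t)}{\Oe}$ and the right-hand-side norms are $O(1)$; one also checks the troublesome term $(\e^2\nu A_\e\hat e_{\e,t}(\hat w_\e),\nabla(J_\e^{-1}\Psi_\e)\hat v)_\Oe$ is $O(\e)\,\norm{\phi}{\Omega}$ using $\e\norm{\partial_{x_i}(J_\e^{-1}\Psi_\e)}{L^\infty}\le C$. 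This shows $\hat Q_\e(t)$ is \emph{precompact} in the dual pairing against mean-zero test functions in a way that is stable; more precisely, applying the same identity to differences $\hat Q_\e(t)-\hat Q_{\e'}(t)$ and to $\phi=\hat Q_\e(t)-\overline{\hat Q_\e(t)}$ (its own mean-zero part) one bounds $\norm{\hat Q_\e(t)-\overline{\hat Q_\e(t)}}{\Omega}^2$ by $C\big(\e + \|\hat w_\e\|\cdot\text{(something}\to0)\big)\norm{\hat Q_\e(t)}{\Omega}$, and the mean $\overline{\hat Q_\e(t)}$ is pinned down (e.g.\ by $\hat q_\e$ being a pressure determined up to the normalisation implicit in the saddle-point formulation, or by testing with a fixed $\varphi$). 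The compactness argument is the heart of the matter: as in \cite[proof in]{All89} one shows $\hat Q_\e(t)$ is a Cauchy sequence in $L^2(\Omega)$.

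The step I expect to be the main obstacle is making the Cauchy/compactness argument fully rigorous with the variable coefficients: in Tartar's and Allaire's setting $A_\e=\mathbbm 1$, so $\varphi=\hat v=R_\e(\div^{-1}\phi)$ and the only $\e$-dependence sits in $R_\e$, whereas here the extra factor $J_\e^{-1}\Psi_\e$ introduces a genuinely new $O(\e)$ commutator term in $\nabla\varphi$ that must be controlled uniformly — this is exactly why Lemma \ref{lemma:EstimatesPsi} was proved with the $\e$-weighted derivative bounds, and one must be careful that these $O(\e)$ contributions, when divided by no negative power of $\e$, really do vanish and do not spoil the lower bound that forces strong convergence. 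A secondary subtlety is the measurability/choice of subsequence "for a.e.\ $t$": since the construction is pointwise in $t$ and the extraction of a convergent subsequence is not canonical, one should phrase the conclusion as existence of $\hat q(t)\in L^2(\Omega)$ and a subsequence for a.e.\ $t$, with the full time-regularity $\hat q\in L^{p_s}(S;H^1_0(\Omega))$ deferred to the identification of the limit problem in Theorem \ref{thm:HomogenisationTransformed} (where uniqueness of the limit upgrades the subsequential convergence to convergence of the whole sequence and restores measurability in $t$).
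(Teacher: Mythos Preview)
Your setup is essentially the paper's: testing \eqref{eq:WeakFormTransformed1} with $\varphi=J_\e^{-1}\Psi_\e R_\e(\div^{-1}\phi)$ is exactly the same as testing the rewritten form \eqref{eq:WeakFormSub} with $R_\e(\div^{-1}\phi)$, and the cell-wise correction in $\div(R_\e\,\cdot\,)$ does produce $\hat Q_\e$ from $\hat q_\e$. The coefficient commutator you flag as the main obstacle is in fact routine---it is absorbed into the bound $|\langle F_\e,\varphi\rangle|\le C(\norm{\varphi}{\Omega}+\e\norm{\nabla\varphi}{\Omega})$ via Lemma~\ref{lemma:EstimatesPsi}, just as you say.

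The genuine gap is in your strong-convergence step. A Cauchy argument of the form $(\hat Q_\e-\hat Q_{\e'},\hat Q_\e-\hat Q_{\e'})=\langle F_\e-F_{\e'},\div^{-1}(\hat Q_\e-\hat Q_{\e'})\rangle$ does not close: $F_\e-F_{\e'}$ is bounded in $(H^1)'$ but there is no mechanism making it small, so you only re-derive the uniform bound. Likewise, testing with $\phi=\hat Q_\e-\overline{\hat Q_\e}$ gives $\norm{\hat Q_\e-\overline{\hat Q_\e}}{\Omega}^2\le C\norm{\hat Q_\e}{\Omega}$, which is again just boundedness; there is no ``something $\to 0$'' on the right. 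The missing idea is the one the paper uses: from the weak limit $\hat Q_\e\rightharpoonup\hat q$ one gets $\varphi_\e:=\div^{-1}(\hat Q_\e)\rightharpoonup\varphi:=\div^{-1}(\hat q)$ weakly in $H^1(\Omega)^N$, hence $\varphi_\e\to\varphi$ \emph{strongly in $L^2$} by Rellich. Now the refined structure of the estimate---a full $L^2$ term plus only $\e$ times the $H^1$ term---pays off:
\[
(\hat Q_\e,\hat Q_\e-\hat q)_\Omega=\langle F_\e,\varphi_\e-\varphi\rangle
\le C\bigl(\norm{\varphi_\e-\varphi}{\Omega}+\e\norm{\nabla(\varphi_\e-\varphi)}{\Omega}\bigr)\to 0,
\]
and combined with weak convergence this gives $\norm{\hat Q_\e-\hat q}{\Omega}^2\to 0$. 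The compactness therefore comes from Rellich applied to $\div^{-1}(\hat Q_\e)$, not from any cancellation in $F_\e-F_{\e'}$.

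A minor point: you restrict to mean-zero $\phi$ and then worry about pinning down $\overline{\hat Q_\e}$. The paper avoids this by taking $\div^{-1}:L^2(\Omega)\to H^1(\Omega)^N$ (not $H^1_0$), so the identity $(\hat Q_\e,\phi)_\Omega=\langle F_\e,\div^{-1}\phi\rangle$ holds for all $\phi\in L^2(\Omega)$ and no mean issue arises.
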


\begin{proof}
We define $F_\e(t) \in (H^1(\Omega)^N)'$ by
\begin{align}
\langle F_\e(t), \varphi \rangle_{H^1(\Omega)', H^1(\Omega)}
\coloneqq \intOe \hat{q}_\e\tx \div(R_\e \varphi(x)) \dx.
\end{align}
From \eqref{eq:WeakFormSub}, we obtain
\begin{align*}
\intO \hat{q}_\e(t) \div(R_\e \varphi) \dx = (\e^2 \nu  A_ \e(t) \hat{e}_{\e,t}(\hat{w}_ \e(t)), \nabla ( A_ \e^{-1}(t) R_\e \varphi))_\Oe 
\\
- 
(\Psi_\e^{\top}(t) \hat{f}_ \e(t)-  \nabla{\hat{p}}_{b, \e}, R_\e\varphi)_\Oe
-
( \e^2 \nu  A_ \e(t) \hat{e}_{ \e,t}(\hat{v}_\Ge(t)), \nabla( A_ \e^{-1}(t) R_\e \varphi))_{\Omega_ \e}.
\end{align*}
Thus, we can estimate $F_\e(t)$ using the estimates of $\e \nabla \hat{w}_\e(t)$ (cf.~\eqref{eq:EpsIndependentEstimateVP}), the coefficients (cf.~Lemma \ref{lemma:EstimatesPsi}) and the data (cf.~Assumption~\ref{ass:Data}) as well as the product rule by
\begin{align*}
|\langle F_\e(t), \varphi \rangle_{H^1(\Omega)', H^1(\Omega)} | &\leq  C \e\norm{\nabla (A_\e^{-1} R_\e\varphi)}{\Oe} + C \norm{R_\e \varphi}{\Oe} + \e^2 \norm{\nabla (A_\e^{-1} R_\e\varphi)}{\Oe}
\\
&\leq
C (\e + \e^2)\norm{\nabla R_\e\varphi}{\Oe} + C (1+\e)\norm{R_\e \varphi}{\Oe}.
\end{align*}
Then, the estimates of Lemma \ref{lemma:Re} imply, for $\e\leq 1$,
\begin{align}\label{eq:EstimateFe}
|\langle F_\e(t), \varphi \rangle_{H^1(\Omega)', H^1(\Omega)} |\leq  C (\norm{\varphi}{\Omega}+ \e\norm{\nabla \varphi}{\Omega})
\end{align}
and in particular $\norm{F_\e(t)}{H^1(\Omega)'} \leq C$.

Because $\div(R_\e\varphi) = 0$ if $\div(\varphi)= 0$, we obtain
\begin{align*}
\langle F_\e(t), \varphi \rangle_{H^1(\Omega)', H^1(\Omega)} 
= \intOe \hat{q}_\e\tx \div(R_\e \varphi(x)) \dx = \intOe \hat{q}_\e\tx \div(\varphi(x)) \dx= 0
\end{align*}
for every $\varphi \in H^1(\Omega)$ with $\div(\varphi) =0$.
Since $\div$ has closed range (cf.~Lemma \ref{lemma:div-1}), the closed-range theorem implies that there exists $\hat{Q}_\e(t) \in L^2(\Omega)$ such that
\begin{align}\label{eq:NablaQe=Fe}
&\intO \hat{Q}_\e(t) \div(\varphi) \dx = \langle F_\e(t), \varphi \rangle_{H^1(\Omega)', H^1(\Omega)} 
= \intOe \hat{q}_\e\tx \div(R_\e \varphi(x)) \dx.
\end{align}
Moreover, we obtain with Lemma \ref{lemma:dive-1} the uniform boundedness of  $\norm{\hat{Q}_\e(t)}{L^2(\Omega)}$ by
\begin{align}\notag
\norm{\hat{Q}_\e(t)}{L^2(\Omega)}^2 & = \intO \hat{Q}_\e(t) \div(\div^{-1}(\hat{Q}_\e(t))) = |\langle F_\e(t),\div^{-1}(\hat{Q}_\e(t))  \rangle_{H^1(\Omega)', H^1(\Omega)}| 
\\\label{eq:HatQeBounded}
&\leq C \norm{\div^{-1}(\hat{Q}_\e(t))}{H^1(\Omega)} \leq C \norm{\hat{Q}_\e(t)}{L^2(\Omega)} .
\end{align}
In order to identify $\hat{Q}_\e(t)$ with $\hat{q}_\e$ on $\Oe$, we note that $R_\e(\widetilde{\varphi}) = \varphi$ for every $\varphi \in H^1_\Ge(\Oe)$, where $\widetilde{\varphi}$ is the extension by $0$ of $\varphi$. Then, we obtain
\begin{align*}
\intO \hat{Q}_\e(t) \div(\widetilde{\varphi}) \dx 
=
\intO \hat{Q}_\e(t) \div(\varphi) \dx 
= \intOe \hat{q}_\e(t) \div(\varphi) \dx.
\end{align*}
Lemma \ref{lemma:dive-1} gives the existence of a function $\varphi \in H^1_\Ge(\Oe)$ with $\div(\varphi) = \hat{Q}_\e(t) - \hat{q}_\e(t)$. Testing with this $\varphi$ implies  $\hat{Q}_\e(t)=\hat{q}_\e(t)$ on $\Oe$.

In order to show the strong convergence of $\hat{Q}_\e(t)$, we note that the boundedness of $\hat{Q}_\e(t)$ in $L^p(\Omega)$ allows us to pass to a subsequence, which we still denote by $\hat{Q}_\e(t)$, such that $\hat{Q}_\e(t)$ converges weakly to a function $\hat{q}(t) \in L^2(\Omega)$. 
Now, we consider $\varphi_\e = \div^{-1}(\hat{Q}_\e(t))$, which converges weakly to $\varphi =\div^{-1}(\hat{q}(t))$ in $H^1(\Omega)$, where $\div^{-1}$ is given by Lemma \ref{lemma:div-1}.
Then, we obtain from \eqref{eq:NablaQe=Fe} and \eqref{eq:EstimateFe}
\begin{align*}
(\hat{Q}_\e(t), \div(\varphi_\e - \varphi))_\Oe \leq C( \norm{\varphi_\e -\varphi}{\Omega} + \e \norm{\nabla (\varphi_\e -\varphi)}{\Omega}).
\end{align*}
The compact embedding of $H^1(\Omega)$ into $L^2(\Omega)$ implies that $\norm{\varphi_\e -\varphi}{\Omega} \to 0$. Since $\norm{\nabla (\varphi_\e -\varphi)}{\Omega}$ is bounded, we obtain 
\begin{align*}
(\hat{Q}_\e(t), \hat{Q}_\e(t) -\hat{q}(t)) = (\hat{Q}_\e(t), \div(\varphi_\e - \varphi))_\Oe \to 0.
\end{align*}
By using additionally the weak convergence of $\hat{Q}_\e(t)$, we obtain
\begin{align*}
\norm{\hat{Q}_\e(t) - \hat{q}(t)}{\Omega}^2 
\leq (\hat{Q}_\e(t), \hat{Q}_\e(t) -\hat{q}_\e(t)) - (\hat{q}(t), \hat{Q}_\e(t) -\hat{q}_\e(t)) \to 0,
\end{align*}
which shows the strong convergence of $\hat{Q}_\e(t)$.

The explicit formula \eqref{eq:ExtensionQe} of $\hat{Q}_\e(t)$ can be directly transfered from \cite{All89}.
\end{proof}

In the second step, we pass to the limit $\e \to 0$ in the divergence condition \eqref{eq:WeakFormTransformed2} and derive the  microscopic incompressibility condition \eqref{eq:WeakLimitTransformed2} and macroscopic compressibility condition \eqref{eq:WeakLimitTransformed3}.
\begin{lemma}\label{lemma:HomogenisierungDivergence}
Let $\hat{w}_\e \in L^{p_s}(S;\Oe)$ be the first part of the solution of \eqref{eq:WeakFormTransformed1}--\eqref{eq:WeakFormTransformed2} and $\widetilde{\hat{w}_\e}$ defined as in Theorem \ref{thm:HomogenisationTransformed}.
Then, there exists, for a.e.~$t\in S$, a subsequence $\widetilde{w_\e}(t)$ and $\hat{w}_0(t)\in H^1_{\Gamma\#}(\Yp)$ such that, for this subsequence, $\widetilde{\hat{w}_\e}(t)$ and $\e \nabla \widetilde{\hat{w}_\e}(t)$ two-scale converge to $\hat{w}_0(t)$ and $\nabla_y \hat{w}_0(t)$, respectively.
Furthermore, $\hat{w}_0(t)$ satisfies  \eqref{eq:WeakLimitTransformed2} and \eqref{eq:WeakLimitTransformed3}.
\end{lemma}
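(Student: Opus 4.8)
The plan is to run the standard two-scale compactness argument for the divergence constraint, now carrying along the coefficient $A_\e$ produced by the transformation. I work pointwise in $t$; the $L^{p_s}$-in-time regularity of $\hat{w}_0$ is not part of this lemma but is recovered afterwards from uniqueness of the two-scale limit problem in Theorem~\ref{thm:HomogenisationTransformed}, so I fix $t\in S$ and drop it. Since $\hat{w}_\e\in\HepsN$ vanishes on $\Ge=\partial\Oe\cap\partial\Oes$, its extension $\widetilde{\hat{w}_\e}$ by $0$ lies in $H^1(\Omega)^N$ with $\nabla\widetilde{\hat{w}_\e}=\widetilde{\nabla\hat{w}_\e}$, so \eqref{eq:EpsIndependentEstimateVP} makes $\widetilde{\hat{w}_\e}$ and $\e\nabla\widetilde{\hat{w}_\e}$ bounded in $L^2(\Omega)$; by Theorem~\ref{thm:T-SC-LpComp} I extract a subsequence along which they two-scale converge weakly to $\hat{w}_0$ and to some $\Xi$. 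Testing $\e\nabla\widetilde{\hat{w}_\e}$ against oscillating test functions $\Phi\xxeps$ ($\Phi\in C_c^\infty(\Omega;C^\infty_\#(Y))^{N\times N}$) and integrating by parts — the $\Ge$-boundary term drops because $\hat{w}_\e|_{\Ge}=0$, the $\partial\Omega$-one because $\Phi$ has compact support in $x$ — identifies $\Xi=\nabla_y\hat{w}_0$, hence $\hat{w}_0\in L^2(\Omega;H^1_\#(Y))$; and since $\widetilde{\hat{w}_\e}$ is supported in $\overline\Oe$ while $\1_\Oe(\cdot)=\1_\Yp(\cdot/\e)$ two-scale converges strongly to $\1_\Yp$, the product rule for two-scale convergence gives $\hat{w}_0=\1_\Yp\hat{w}_0$, so $\hat{w}_0(x,\cdot)$ vanishes on $Y\setminus\Yp$ and $\hat{w}_0\in\HYperN$ by the identification in the Notation.

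\emph{Strong form of the constraint and the datum term.} Because $A_\e$ is Lipschitz on $\overline\Omega$ and $\hat{w}_\e,\hat{v}_{\Ge}\in H^1$, the functions $\div(A_\e\hat{w}_\e)$ and $\div(A_\e\hat{v}_{\Ge})$ lie in $L^2(\Oe)$, so \eqref{eq:WeakFormTransformed2} with arbitrary $\phi\in L^2(\Oe)$ is equivalent to the pointwise identity $\div(A_\e(\hat{w}_\e+\hat{v}_{\Ge}))=0$ a.e.\ in $\Oe$, which I use in its place. I then record two facts about the datum term: first, $\norm{\div(A_\e\hat{v}_{\Ge})}{\Omega}\le C(t)$, obtained by combining the scaled coefficient bounds of Lemma~\ref{lemma:EstimatesPsi} with $\tfrac{1}{\e}\norm{\hat{v}_{\Ge}}{\Omega}+\norm{\nabla\hat{v}_{\Ge}}{\Omega}\le C(t)$ from Lemma~\ref{lemma:EstimatesHatData}; second, $\div(A_\e\hat{v}_{\Ge})\to\div_y(A_0\hat{v}_\Gamma)$ weakly in the two-scale sense, which I obtain by testing against $\psi\xxeps$ ($\psi\in C_c^\infty(\Omega;C^\infty_\#(Y))$), integrating by parts, and passing to the limit with $\norm{A_\e\hat{v}_{\Ge}}{\Omega}\to0$, the strong two-scale convergence $A_\e\to A_0$ (Lemma~\ref{lemma:TscPsi}) and the weak two-scale convergence $\tfrac{1}{\e}\hat{v}_{\Ge}\to\hat{v}_\Gamma$, then integrating by parts in $y$; uniqueness of the limit upgrades this to the whole sequence.

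\emph{Passage to the limit.} For the microscopic incompressibility \eqref{eq:WeakLimitTransformed2} I test $\div(A_\e(\hat{w}_\e+\hat{v}_{\Ge}))=0$ against $\phi=\psi\xxeps$ with $\psi\in C_c^\infty(\Omega;C^\infty_\#(Y))$ chosen to vanish on $Y\setminus\Yp$ (so $\psi\xxeps$ vanishes on $\Oes$ and on $\Ge$, killing all boundary terms), multiply by $\e$ and let $\e\to0$: the $\e\nabla_x\psi$-contribution dies since $A_\e(\hat{w}_\e+\hat{v}_{\Ge})$ is $L^2$-bounded, and the $\nabla_y\psi$-contribution tends, by the product rule ($\1_\Oe\hat{v}_{\Ge}\to0$ strongly in $L^2$, $A_\e\widetilde{\hat{w}_\e}\to A_0\hat{w}_0$ weakly two-scale), to $\intO\intY A_0\hat{w}_0\cdot\nabla_y\psi\,\dyx$; integrating by parts in $y$ and a density argument then give \eqref{eq:WeakLimitTransformed2} (in fact $\div_y(A_0\hat{w}_0)=0$ a.e.). For the macroscopic compressibility \eqref{eq:WeakLimitTransformed3} I test against a $y$-independent $\phi=\theta_0\in C_c^\infty(\Omega)$: integrating $\intOe\div(A_\e\hat{w}_\e)\theta_0$ by parts (boundary terms vanish since $\hat{w}_\e|_{\Ge}=0$ and $\theta_0$ is compactly supported) gives $-\intO A_\e\widetilde{\hat{w}_\e}\cdot\nabla\theta_0$, which by the product rule tends to $-\intO\intYp A_0\hat{w}_0\cdot\nabla\theta_0\,\dyx$, while $-\intOe\div(A_\e\hat{v}_{\Ge})\theta_0=-\intO\1_\Oe\,\div(A_\e\hat{v}_{\Ge})\,\theta_0$ tends, by the product rule ($\1_\Oe\to\1_\Yp$ strongly two-scale, together with the weak two-scale convergence of $\div(A_\e\hat{v}_{\Ge})$ from the previous step), to $-\intO\intYp\div_y(A_0\hat{v}_\Gamma)\,\dy\,\theta_0\,\dx$; equating limits shows $\intYp A_0\hat{w}_0\,\dy\in H(\div;\Omega)$ with $\div_x\intYp A_0\hat{w}_0\,\dy=-\intYp\div_y(A_0\hat{v}_\Gamma)\,\dy$, and since $\theta_0|_{\partial\Omega}=0$ the Green formula extends this identity to all $\theta_0\in H^1_0(\Omega)$, which is \eqref{eq:WeakLimitTransformed3}.

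The genuine obstacle is the interface datum $\hat{v}_{\Ge}$ appearing in the second and third paragraphs: it is of order $\e$ in $L^2(\Omega)$ while $\tfrac{1}{\e}\hat{v}_{\Ge}$ and $\nabla\hat{v}_{\Ge}$ are of order one, so $\div(A_\e\hat{v}_{\Ge})$ is an $O(\e^{-1})\cdot O(\e)$ quantity whose uniform $L^2$-bound and weak two-scale limit $\div_y(A_0\hat{v}_\Gamma)$ rely on a cancellation that must be tracked carefully through the product and derivative rules, using exactly the scaled estimates of Lemma~\ref{lemma:EstimatesPsi} and the strong two-scale convergences of Lemma~\ref{lemma:TscPsi}. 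A secondary nuisance, recurring in all limit passages, is that the weakly two-scale convergent $L^2$-sequences have to be paired with the non-continuous periodic function $\1_\Yp$; this I handle uniformly by routing the products through $\1_\Oe$, which two-scale converges strongly, and applying the product rule.
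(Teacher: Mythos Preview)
Your proof is correct and follows essentially the same approach as the paper. The only cosmetic differences are: for the microscopic incompressibility the paper uses arbitrary $\theta\in D(\Omega;C^\infty_\#(Y))$ and kills the limit via the bound $\norm{\div(A_\e\hat{w}_\e)}{\Oe}=\norm{\div(A_\e\hat{v}_{\Ge})}{\Oe}\leq C(t)$ (so that $\e\,\div(A_\e\hat{w}_\e)\to 0$ directly), whereas you restrict to test functions vanishing on $Y\setminus\Yp$ and pass to the two-scale limit of $A_\e\widetilde{\hat{w}_\e}$ instead; and for the macroscopic condition you isolate the weak two-scale convergence of $\div(A_\e\hat{v}_{\Ge})$ as a standalone fact and route the $\Oe$-restriction through the product rule with $\1_\Oe$, while the paper does the same computation inline. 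Both routes are standard variants of the same argument.
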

\begin{proof}
The uniform estimate \eqref{eq:EpsIndependentEstimateVP} implies that 
$\widetilde{\hat{w}_\e}(
t)$ and $\e \nabla \widetilde{\hat{w}_\e}(t)$ are bounded as well. Then, by a standard two-scale compactness result there exists, for a.e.~$t \in S$, a subsequence and a function $\hat{w}_0(t) \in L^2 (\Omega;H^1_\#(Y))^N$ such that for this subsequence
$\widetilde{\hat{w}_\e}(t) \to w_0(t)$ and $\e \nabla\widetilde{ \hat{w}_\e}(t) \to \nabla_y w_0(t)$ in the two-scale sense.
Using arbitrary two-scale test functions $\varphi \in C(\overline{\Omega};C^\infty_\#(Y))^N$ which are $0$ on $\Omega \times Y^\mathrm{p}$ shows $w_0(t) = 0 $ in $\Omega \times Y^s$, which means $w_0(t) \in H^1_{\Gamma\#}(\Yp)$.
	
By applying the estimates of Lemma \ref{lemma:EstimatesPsi} and Lemma \ref{lemma:EstimatesHatData} on the incompressibility condition \eqref{eq:WeakFormTransformed2}, we obtain 
\begin{align*}
\norm{\div (A_\e(t)\hat{w}_\e(t))}{L^2(\Oe)} = \norm{\div (A_\e(t)\hat{v}_\Ge(t))}{L^2(\Oe))} \leq C(t)
\end{align*}
for a.e.~$t \in S$ for $C \in L^{p_s}(S)$.
Using this estimate, the $Y$-periodicity of $\hat{w}_0$, $A_0(t)$ and the two-scale test functions as well as the strong two-scale convergence of the coefficients given by Lemma \ref{lemma:TscPsi}, we can conclude for $\theta \in D(\Omega; C^\infty_{\#}(Y))$  and for a.e.~$t \in S$:
\begin{align*}
&\intOYp\div_y (A_0\txy\hat{w}_0\txy )  \theta(x,y) \dyx
\\
&=-\intOYp A_0\txy\hat{w}_0\txy \cdot  \nabla_y \theta(x,y) \dyx
\\
&= 
-\limEpsNull \int\limits_{\Oe}   A_\e\tx \hat{w}_\e\tx  \cdot   \bigg( \e \nabla_1 \theta \xxeps + \nabla_y  \theta \xxeps \bigg) \dx
\\
&
=
\limEpsNull \int\limits_{\Oe} \e \div (A_\e\tx\hat{w}_\e\tx)   \theta \xxeps \dx 
= 0.
\end{align*}
By the density of $D(\Omega;C_\#^\infty(Y))$ in $L^2(\Omega; L^2(Y^\mathrm{p}))$, we obtain the micropscopic incompressibility condition \eqref{eq:WeakLimitTransformed2}.
	
In order to derive the macroscopic compressibility condition, we test \eqref{eq:WeakFormTransformed2} with $\theta \in D(\Omega)$ and pass to the limit $\e\to 0$. Using the two-scale convergence of $A_\e(t)$ and $\hat{v}_\Ge(t)$ and their derivatives yields
\begin{align*}
&\int\limits_{\Omega}   \div_x \Big(\intYp A_0\txy\hat{w}_0\txy   \dy \Big) \ \theta(x)  \dx 
\\
&=
- \int\limits_{\Omega}   \intYp A_0\txy\hat{w}_0\txy   \dy \cdot  \nabla_x \theta(x)  \dx 
\\
&= 
-\limEpsNull \intOe A_\e\txy \hat{w}_\e\tx  \cdot  \nabla_x\theta(x) \dx
= 
\limEpsNull \intOe  \div_x (A_\e\tx\hat{w}_\e\tx ) \theta(x) \dx
\\
&= 
-\limEpsNull \intOe  \div_x (A_\e\tx \hat{v}_\Ge \tx )   \theta(x) \dx
\\
&= 
-\int\limits_{\Omega}   \intYp \div_y \big(A_0\txy \hat{v}_\Gamma\txy \big)  \dy  \ \theta(x)  \dx.
\end{align*}
By the density of $D(\Omega)$ in $H^1_0(\Omega)$, we can conclude the macroscopic compressibility condition \eqref{eq:WeakLimitTransformed3}.
\end{proof}
In the third step, we can pass to the limit $\e \to 0$ in \eqref{eq:WeakFormSub}.
\begin{proof}[Proof of Theroem \ref{thm:HomogenisationTransformed}]

Employing Lemma \ref{lemma:HomogenisierungDivergence} and Lemma \ref{lemma:ConvergenceQe}, there exists, for a.e.~$t\in S$, a subsequence, $\hat{q}(t) \in L^2(\Omega)$  and $\hat{w}(t) \in \HYperN$ such that, for this subsequence, $\hat{Q}_\e(t)$ converges strongly to $\hat{q}(t)$, $\widetilde{w_\e}(t)$ and $\e \nabla \widetilde{w_\e}(t)$ two-scale converge weakly to $\hat{w}_0(t)$ and $\nabla_y \hat{w}_0(t)$, respectively. We consider this subsequence in the following.

Let $\varphi \in C^\infty (\overline{\Omega}; C_{\Gamma\#}^\infty (\Yp))^N$ such that $\div_y(\varphi) = 0$. Then, we test \eqref{eq:WeakFormSub} with $\varphi(\cdot_x, \frac{\cdot_x}{\e})$ and pass to the limit $ \e \to 0$, which yields
\begin{align}\notag
\intOYp \nu A_0\txy \hat{e}_{y,t}(\hat{w}_0\txy) :   \nabla_y(A_0^{-1}\txy \varphi(x,y)) \dyx
\\\notag
-
\intOYp \hat{q}\tx  \div_x(\varphi(x,y))
= \intOYp \Psi_0^\top\txy f\tx \cdot \varphi(x,y) \dyx
\\\label{eq:HomogenizedWithP0}
-
\intOYp (\nabla_x p_b\tx+\nabla_y \hat{p}_{b,1}(t)) \cdot \varphi(x,y) \dyx
\end{align}
for any $\varphi \in C^\infty (\overline{\Omega} ; C_{\Gamma\#}^\infty (Y ))^N$ such that $\div_y(\varphi) = 0$. 
Since $C^\infty (\overline{\Omega} ; C_{\Gamma\#}^\infty (\Yp ))^N $ is dense in $\HYperN$ and $\div_y : \HYperN \rightarrow L^2(\Omega\times \Yp)$ is continuous, \eqref{eq:HomogenizedWithP0} holds for every $\varphi \in \HYperN$ with $\div_y(\varphi)= 0$.

Moreover, the following integration by parts shows that we can omit $\nabla_y \hat{p}_{b,1}(t)$
\begin{align*}
\intOYp \nabla_y \check{\psi}_0^{-1} \nabla p_{b}(t)\cdot \varphi(x,y) \dyx = -\intOYp \nabla_y \hat{p}_{b,1}(t) \div_y( \varphi(x,y)) \dyx = 0.
\end{align*}
The boundary term in this integration by parts vanish since $p_{b,1}(t)$ is $Y$-periodic.
Now, we choose $\varphi = \phi \phi_i$ in \eqref{eq:HomogenizedWithP0}, for $\phi \in C^\infty(\overline{\Omega})$ and $\phi_i \in H^1_{\Gamma\#}(\Yp)^N$, with $\phi_i =  0$ in $\Ys$, $\div(\phi_i) = 0$ and $\intYp \phi_i(y) =e_i$ for $i \in \{1, \dots, N\}$ (for the existence of $\phi_i$ see \cite[Lemma 6]{Mir16}), and obtain
\begin{align*}
\intO  -\hat{q}\tx \partial_{x_i} \phi(x) - G_i\tx \varphi(x) \dx = 0
\end{align*}
for
\begin{align*}
G\tx = &
-
\intYp \nu A_0\txy \hat{e}_{y,t}(\hat{w}_0\txy) :   \nabla_y(A_0^{-1}\txy \varphi(x,y)) \dy
\\
&+
\intOYp (\Psi_0^\top\txy f\tx - \nabla_x p_b\tx) \cdot \varphi(x,y) \dy.
\end{align*}
Since $G_i(t) \in L^2(\Omega)$ it follows that $\hat{q}(t) \in H^1_0(\Omega)$.	
	
Employing the Bogovski\v{i}-operator on $\Yp$, we obtain that $\div : \HYperN \supset L^2(\Omega;H^1_0(\Yp)) \rightarrow L^2(\Omega;L^2_0(\Yp))$ is surjective. Then, the closed-range theorem gives $\hat{q}_1(t) \in L^2(\Omega; L^2_0(\Yp))$ such that
\begin{align}\notag
\intOYp \nu A_0\txy \hat{e}_{y,t}(\hat{w}_0\txy) :   \nabla_y(A_0^{-1}\txy \varphi(x,y)) \dyx
\\\notag
+\intOYp \nabla_x \hat{q}_0\tx  \cdot \varphi(x,y) + \hat{q}_1(t)\div_y(\varphi(x,y)) \dyx
\\\label{eq:HomogenizedWithP0P1}
= \intOYp (\Psi_0^\top\txy f\tx - \nabla p_b\tx) \cdot \varphi(x,y) \dyx
\end{align}
for all $\varphi \in \HYperN$.
Since $A_0^{-1}(t)$ is invertible  \eqref{eq:HomogenizedWithP0P1} is equivalent to
\begin{align}\notag
\intOYp \nu A_0\txy \hat{e}_{y,t}(\hat{w}_0\txy) :   \nabla_y\varphi\xy \dyx
\\\notag
+\intOYp A_0^{\top}\txy\nabla_x \hat{q}\tx  \cdot \varphi\xy + \hat{q}_1(t)\div_y(A_0\txy\varphi(x,y)) \dyx
\\\label{eq:HomogenisedSymmetricReference}
= \intOYp (J_0\txy f\tx - A_0\txy\nabla p_b\tx) \cdot \varphi(x,y) \dyx.
\end{align}
Furthermore, \eqref{eq:WeakLimitTransformed2}--\eqref{eq:WeakLimitTransformed3} follow from Lemma \ref{lemma:HomogenisierungDivergence}.
Then, the microscopic incompressibility condition \eqref{eq:WeakLimitTransformed2}, the $Y$-periodicity and the zero boundary values of $\varphi$ on $\Gamma$ imply that 
\begin{align*}
\intYp \nu A_0\txy(\Psi_0^{-\top} \txy \nabla \hat{w}\txy )^\top : \nabla \varphi(y) \dy = 0
\end{align*} for all $\varphi \in H^1_{\Gamma \#}(\Yp)$, which simplifies \eqref{eq:HomogenisedSymmetricReference} to \eqref{eq:WeakLimitTransformed1}. 
Since \eqref{eq:WeakLimitTransformed1}--\eqref{eq:WeakLimitTransformed3} has a unique solution (cf.~Theorem \ref{thm:HomogenisedReferenceExistence}), the convergence holds for the whole sequence. 
\end{proof}

In order to show the existence and uniqueness of the solution of \eqref{eq:WeakLimitTransformed1}--\eqref{eq:WeakLimitTransformed3}, we derive the following inf--sup estimate for the $\div$-conditions.
\begin{lemma}\label{lemma:InfSupLimit}
There exists a constant $C \in \R$ such that 
\begin{align}\label{eq:infsupLimit}
\sup_{v\in \HYperN} \frac{( A_0(t) v, \nabla \phi_0 )_{\Omega \times Y^\mathrm{p}} -  (\div_y(A_0(t) v), \phi_1)_{\Omega \times Y^\mathrm{p}} }{\norm{v}{L^2(\Omega;H^1_{\Gamma\#}(Y^\mathrm{p}))} \norm{(\phi_0, \phi_1)}{H^1_0(\Omega) \times L^2(\Omega; L^2_{0\#}(Y^\mathrm{p}))}} \geq \beta
\end{align}
for a.e.\ $t \in S$ and any $(\phi_0, \phi_1) \in H^1_0(\Omega) \times L^2(\Omega; L^2_{0\#}(Y^\mathrm{p}))$.
\end{lemma}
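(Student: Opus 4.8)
The plan is to establish the inf--sup condition \eqref{eq:infsupLimit} by treating the two pieces of the bilinear form somewhat separately, exploiting that the macroscopic pressure $\phi_0$ is tested against a divergence-free (in $y$) macroscopic velocity, while the microscopic pressure $\phi_1$ is controlled by a microscopic Bogovski\u{\i} construction that does not disturb the macroscopic balance. First I would fix $(\phi_0,\phi_1) \in H^1_0(\Omega) \times L^2(\Omega;L^2_{0\#}(\Yp))$ and construct a competitor $v \in \HYperN$ as a sum $v = v^{(0)} + v^{(1)}$. For the microscopic part, for a.e.\ $x \in \Omega$ apply the Bogovski\u{\i} operator on $\Yp$ (as in the surjectivity statement already used in the proof of Theorem \ref{thm:HomogenisationTransformed}) to get $\tilde v^{(1)}(x,\cdot) \in H^1_0(\Yp)^N$ with $\div_y \tilde v^{(1)}(x,\cdot) = \phi_1(x,\cdot)$ and $\norm{\tilde v^{(1)}(x,\cdot)}{H^1(\Yp)} \le C \norm{\phi_1(x,\cdot)}{L^2(\Yp)}$; measurability in $x$ follows from the linearity and boundedness of the Bogovski\u{\i} operator together with measurability of $\phi_1$. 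Then set $v^{(1)} \coloneqq A_0(t)^{-1} \tilde v^{(1)}$, which lies in $L^2(\Omega;H^1_0(\Yp))^N \subset \HYperN$ thanks to the uniform bounds on $A_0(t)^{-1}$ and $\e D_x A_0$-type coefficients from Lemma \ref{lemma:TscPsi} and Lemma \ref{lemma:EstimatesPsi} (more precisely the $C^1$-regularity of $\psi_0$ in Assumption \ref{ass:psi}); by construction $\div_y(A_0(t) v^{(1)}) = \phi_1$, and since $\tilde v^{(1)} \in H^1_0(\Yp)$ the macroscopic term $(A_0(t) v^{(1)}, \nabla\phi_0)$ need not vanish, so I would instead arrange the macroscopic part to absorb it.

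For the macroscopic part, I would choose $v^{(0)}(x,y) \coloneqq A_0(t,x,y)^{-1}\big( J_0(t,x,y)\, \chi(x,y)\, \xi(x) \big)$ where $\xi \in H^1_0(\Omega)^N$ solves a Stokes-type or simply a $\div_x^{-1}$ problem on $\Omega$ — using Lemma \ref{lemma:div-1} — with $\div_x \xi = \phi_0 + (\text{lower order correction from } v^{(1)})$, and $\chi(x,\cdot) \in H^1_{\Gamma\#}(\Yp)^N$ is a fixed cell-corrector with $\div_y \chi = 0$ and $\int_{\Yp}\chi(x,\cdot)\,dy = \mathbb{1}$ (the existence of such correctors componentwise is exactly \cite[Lemma 6]{Mir16}, already invoked above). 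The point of pre-multiplying by $A_0^{-1}$ is that $\div_y(A_0 v^{(0)}) = \div_y(J_0 \chi \xi) = J_0 \xi \cdot \div_y \chi + \ldots$; one checks that with $\div_y \chi = 0$ and the Piola-type identity $\div_y(J_0 \Psi_0^{-1} \, \cdot\,)$ the $y$-divergence of $A_0 v^{(0)}$ vanishes up to terms handled by the $\int_{\Yp}$-normalisation, so that $v^{(0)}$ does not spoil the microscopic equation, while $\int_{\Yp} A_0 v^{(0)}\,dy$ reproduces $J_0$-averaged $\xi$ and hence $\div_x$ of it pairs against $\nabla_x\phi_0$ to give back $\norm{\phi_0}{L^2(\Omega)}^2$ modulo controlled errors.

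The estimate is then completed by plugging $v = v^{(0)} + v^{(1)}$ into the numerator of \eqref{eq:infsupLimit}: the $\phi_1$-term yields $\norm{\phi_1}{L^2(\Omega\times\Yp)}^2$ from $v^{(1)}$ plus a cross term with $v^{(0)}$ that is bounded by $C\norm{\phi_0}{}\,\norm{\phi_1}{}$ and absorbed via Young's inequality; the $\phi_0$-term yields $c\norm{\phi_0}{L^2}^2$ (using that $\phi_0 \in H^1_0(\Omega)$ so $\norm{\nabla_x^{-1}}{}$-type bounds and the Poincaré inequality on $\Omega$ apply, and $J_0 \ge c_J > 0$) plus errors from $v^{(1)}$, again absorbed. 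Meanwhile $\norm{v}{L^2(\Omega;H^1_{\Gamma\#}(\Yp))} \le C\big(\norm{\phi_0}{H^1_0(\Omega)} + \norm{\phi_1}{L^2(\Omega\times\Yp)}\big)$ by the bounds on the Bogovski\u{\i} and $\div^{-1}$ operators and the uniform $C^1$-bounds on $A_0(t)^{\pm 1}$, which are uniform in $t$ by Assumption \ref{ass:psi}. Dividing gives \eqref{eq:infsupLimit} with a $t$-independent $\beta$.

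\textbf{Main obstacle.} The delicate point is the simultaneous control of both pressure components by a single $v$: the microscopic corrector $v^{(1)}$ must not destroy the macroscopic pairing and the macroscopic corrector $v^{(0)}$ must have vanishing $y$-divergence after multiplication by $A_0$, which forces the Piola-transform bookkeeping $\div_y(A_0 \,\cdot\,) = J_0 \div_y(\Psi_0^{-1}\,\cdot\,)$ and the choice to conjugate by $A_0^{-1}$; getting the cross terms to be genuinely lower order (so Young's inequality closes the argument) and verifying measurability in $(t,x)$ of all the cell-problem solutions are the steps that require care rather than the final division.
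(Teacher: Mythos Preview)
Your microscopic piece $v^{(1)} = A_0^{-1}\tilde v^{(1)}$ with $\div_y \tilde v^{(1)} = \phi_1$ is fine and is essentially what the paper does. The genuine gap is in the macroscopic piece.

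You propose to take $\xi \in H^1_0(\Omega)^N$ with $\div_x \xi = \phi_0 + (\text{correction})$ via Lemma~\ref{lemma:div-1}, so that after integration by parts the $\phi_0$-term yields $c\norm{\phi_0}{L^2(\Omega)}^2$. But the denominator in \eqref{eq:infsupLimit} carries the $H^1_0(\Omega)$-norm of $\phi_0$, i.e.\ $\norm{\nabla\phi_0}{\Omega}$, and the Poincar\'e inequality only gives $\norm{\phi_0}{L^2}\le C\norm{\nabla\phi_0}{L^2}$ --- the wrong direction. Concretely, your competitor satisfies $\norm{v^{(0)}}{L^2(\Omega;H^1(\Yp))}\le C\norm{\xi}{H^1(\Omega)}\le C\norm{\phi_0}{L^2}$ from the $\div^{-1}$ bound, so the quotient is at best $c\norm{\phi_0}{L^2}$, which does not control $\norm{\phi_0}{H^1_0}$ (take highly oscillatory $\phi_0$). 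The same defect kills the Young-inequality absorption of the cross term $(A_0 v^{(1)},\nabla\phi_0)$, since that term is of size $\norm{\phi_1}{}\norm{\nabla\phi_0}{}$ while your diagonal only furnishes $\norm{\phi_0}{L^2}^2$.

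The fix, and this is what the paper does, is \emph{not} to invert $\div_x$ but to let the macroscopic factor be a gradient: solve for $w\in H^1_0(\Omega)$ the elliptic problem
\[
(A\nabla w,\nabla\varphi)_\Omega=(\nabla\phi_0,\nabla\varphi)_\Omega+\Big(\int_{\Yp}u(\cdot,y)\,dy,\nabla\varphi\Big)_\Omega\quad\forall\varphi\in H^1_0(\Omega),
\]
where $u$ is your Bogovski\u{\i} field and $A_{ij}=\int_{\Yp}(v_i)_j\,dy$ is the (positive definite) permeability matrix built from the cell Stokes solutions $v_i$ with right-hand side $e_i$. Then set
\[
v(x,y)=A_0^{-1}(t,x,y)\Big(\sum_{i=1}^N v_i(y)\,\partial_{x_i}w(x)-u(x,y)\Big).
\]
Because $\div_y v_i=0$ one gets $\div_y(A_0 v)=-\phi_1$ exactly, and because $\int_{\Yp}v_i\,dy$ is the $i$-th column of $A$ one gets $(A_0 v,\nabla\phi_0)_{\Omega\times\Yp}=(A\nabla w-\int_{\Yp}u\,dy,\nabla\phi_0)_\Omega=\norm{\nabla\phi_0}{\Omega}^2$ exactly. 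No cross terms, no Young's inequality. Your cell-corrector idea is salvageable if you replace $\xi=\div_x^{-1}(\phi_0)$ by $\xi=\nabla w$ with $w$ as above (or, more crudely, $\xi=\nabla\phi_0$ and accept cross terms), but as written the construction does not close.
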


\begin{proof}
Let $(\phi_0,\phi_1) \in H^1_0(\Omega) \times L^2(\Omega; L^2_0(Y^\mathrm{p}))$.
From the Bogovski\v{i}-operator, we obtain $u \in \HYperN$ such that
\begin{align*}
\div_y(u) = \phi_1, \hspace{1cm}\norm{u}{L^2(\Omega;H^1_{0}(Y^\mathrm{p}))} \leq C \norm{\phi_1}{L^2(\Omega; L^2_{0\#}(Y^\mathrm{p}))}
\end{align*}
for a constant $C$ which only depends on $\Omega$ and $\Yp$ and not on $\phi_1$.
	
We define the functions $v_1, \ldots, v_n \in H_{\Gamma\#}^1(Y^\mathrm{p})^N$ as the solutions of the following Stokes problems:
Find  $(v_i, p_i) \in H_{\Gamma\#}^1(Y^\mathrm{p})^N \times L^2_0(Y^\mathrm{p})$ such that 
\begin{align*}
(\nabla v_i, \nabla \varphi)_{Y^\mathrm{p}} -(p_i, \div (v_i))_{Y^\mathrm{p}} &= (e_i, \varphi)_{Y^\mathrm{p}},
\\
(\div(v_i), \phi)_{Y^\mathrm{p}} &= 0
\end{align*}
for any $(\varphi ,\phi) \in H_{\Gamma\#}^1(Y^\mathrm{p})^N \times L^2_0(\Omega)$.
Choosing $\varphi = v_j$ gives 
\begin{align}\label{eq:DefA}
A \coloneqq
\begin{pmatrix}
\vdots   &   & \vdots
\\
\intYp v_1(y) \dy & \cdots  & \intYp v_n(y) \dy
\\	\vdots   &   & \vdots
\end{pmatrix}
=
\begin{pmatrix}
(\nabla v_1, \nabla v_1)_{Y^\mathrm{p}}   & \cdots & 	(\nabla v_1, \nabla v_n)_{Y^\mathrm{p}}
\\
\vdots  &    & \vdots
\\		(\nabla v_n, \nabla v_1)_{Y^\mathrm{p}}    & \cdots & 	(\nabla v_n, \nabla v_n)_{Y^\mathrm{p}}
\end{pmatrix}.
\end{align}
Since $\{ v_1,  \cdots, v_n \}$ are linearly independent in $H_{\Gamma\#}^1(Y^\mathrm{p})^N$, $A$ is symmetric and positive definite. 
This guarantees that the following boundary-value problem is well-defined:
Find a solution $w \in H^1_0(\Omega)$ such that
\begin{align}\label{eq:DefWvCor}
(A \nabla w , \nabla \varphi )_\Omega = (\nabla \phi_0,  \nabla \varphi)_\Omega + \Big(\intYp u(\cdot,y) \dy, \nabla \varphi\Big)_\Omega
\end{align}
for any $\varphi \in H^1_0(\Omega)$.	
	
By the Theorem of Lax--Milgram, we obtain unique solutions $w \in H^1_0(\Omega)$, which can be estimated by
\begin{align}
\norm{w}{H^1_0(\Omega)} \leq C( \norm{\phi_0}{H^1_0(\Omega)} +  \norm{u}{\Omega \times Y^\mathrm{p}}).
\end{align}
We define
$v\xy\coloneqq  A_0^{-1}\txy   \left(\sum_{i =1}^{n} v_i(y) \ \partial_{x_i} w(x) - u\xy \right)$
and estimate
\begin{align*}
\norm{v}{L^2(\Omega;H^1_{\Gamma\#}(Y^\mathrm{p}))} 
&\leq 
C( \norm{w}{H^1_0(\Omega)} + 	\norm{u}{L^2(\Omega;H^1_{\Gamma\#}(Y^\mathrm{p}))})
\\
&\leq
C (\norm{\phi_0}{H^1_0(\Omega)} + \norm{\phi_1}{L^2(\Omega; L^2_{0\#}(Y^\mathrm{p}))}).
\end{align*}
for $C$ independent of $t$.
Then, we obtain
\begin{align*}
&(A_0(t) v , \nabla \phi_0 )_{\Omega\times Y^\mathrm{p}}
=
\Big(A \nabla w  - \intYp u(\cdot,y) \dy  , \nabla \phi_0 \Big)_{\Omega}
= (\nabla \phi_0, \nabla \phi_0)_\Omega,
\\
&\div_y(A_0(t) v ) 
=
\sum_{i =1}^{n} \div_y( v_i(y) ) \partial_{x_i} w(x) - \div_y(u(x,y)) = -\phi_1(x).
\end{align*}
Using this explicitly constructed $v$, we can conclude \eqref{eq:infsupLimit} for $C>0$, which is independent of $t$.
\end{proof}

\begin{theorem}\label{thm:HomogenisedReferenceExistence}
There exists a unique solution $(\hat{w}_0, \hat{q}, \hat{q}_1)$ $\in L^p(S; \HYperN)\times L^p(S; H^1_0(\Omega))$$\times L^p(S; L^2(\Omega; L^2_{0\#}(Y^\mathrm{p})))$ of \eqref{eq:WeakLimitTransformed1}--\eqref{eq:WeakLimitTransformed3}.
\end{theorem}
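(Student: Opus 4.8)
The plan is to cast \eqref{eq:WeakLimitTransformed1}--\eqref{eq:WeakLimitTransformed3} as a saddle-point problem of the form \eqref{eq:InfSupV}--\eqref{eq:InfSupP} and invoke Lemma \ref{lemma:SaddlePoint}, pointwise for a.e.\ $t\in S$, and then recover $L^{p_s}(S)$-measurability and integrability via Lemma \ref{lemma:ContinuityWithRespectToData}. First I would fix $t\in S$ and set $V \coloneqq \HYperN$ and $Q \coloneqq H^1_0(\Omega) \times L^2(\Omega;L^2_{0\#}(\Yp))$, with the bilinear forms
\begin{align*}
a(v,\varphi) &\coloneqq \intOYp \nu A_0(t)\Psi_0^{-\top}(t)\nabla_y v : \nabla_y \varphi \, \dyx,
\\
b(v,(\phi_0,\phi_1)) &\coloneqq \intOYp A_0^\top(t)\nabla_x\phi_0 \cdot v + \phi_1 \div_y(A_0(t)v) \, \dyx,
\end{align*}
and the right-hand side functional built from $J_0 f - A_0^\top\nabla_x p_b$ on $V'$ and the interface term $-\intYp\div_y(A_0(t)\hat v_\Gamma)\,\dy$ on $Q'$. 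Note the macroscopic compressibility condition \eqref{eq:WeakLimitTransformed3} only constrains $\div_x\big(\intYp A_0 \hat w_0\,\dy\big)$, which is exactly the $\phi_0$-component of $b(\hat w_0,\cdot)$, while \eqref{eq:WeakLimitTransformed2} is its $\phi_1$-component; so the pair $(\hat q,\hat q_1)$ is precisely the Lagrange multiplier associated with $b$.

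The coercivity of $a$ on $V$ is the first hypothesis of Lemma \ref{lemma:SaddlePoint}: since $A_0\Psi_0^{-\top} = J_0\Psi_0^{-1}\Psi_0^{-\top}$ and, by symmetrising the test direction exactly as in the computation of \eqref{eq:CoercivityAeps}, one has $a(v,v) = \intOYp \nu J_0 |\hat e_{y,t}(v)|^2\,\dyx \geq c_J\nu\,\norm{\hat e_{y,t}(v)}{\Omega\times\Yp}^2$; then the Korn-type inequality (the $y$-cell version underlying Lemma \ref{lemma:KornPsiEps}, applied cellwise on $\Yp$ via Lemma \ref{lemma:KornCoeff}) together with the Poincaré inequality on $H^1_{\Gamma\#}(\Yp)$ gives $a(v,v)\geq \alpha\norm{v}{V}^2$ with $\alpha$ independent of $t$. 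Continuity of $a$ and $b$ follows from the uniform bounds on $A_0,\Psi_0^{-1}$ and their $y$-derivatives from Lemma \ref{lemma:TscPsi} and Assumption \ref{ass:psi}. The inf--sup condition for $b$ is exactly the content of Lemma \ref{lemma:InfSupLimit}, which supplies $\beta>0$ independent of $t$. Hence Lemma \ref{lemma:SaddlePoint} yields, for a.e.\ $t$, a unique $(\hat w_0(t),(\hat q(t),\hat q_1(t)))\in V\times Q$ solving the system, with the a priori bound $\norm{\hat w_0(t)}{V} + \norm{\hat q(t)}{H^1_0(\Omega)} + \norm{\hat q_1(t)}{L^2(\Omega;L^2_{0\#}(\Yp))} \leq C(t)$.

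Finally, to promote this to the Bochner spaces in the statement, I would observe that $t\mapsto(a(t),b(t),\text{RHS}(t))$ is measurable into $A\times B\times V'\times Q'$ — the coefficients $A_0(t),\Psi_0^{-\top}(t)$ are $L^\infty(S\times\Omega;C^2(\overline Y))$ and $f,p_b,\hat v_\Gamma$ are $L^{p_s}(S;\cdot)$ by Assumption \ref{ass:Data} — and that the uniform $\alpha,\beta$ place these forms in the sets $A,B$ of Lemma \ref{lemma:ContinuityWithRespectToData}; hence the solution map is continuous, so $t\mapsto(\hat w_0(t),\hat q(t),\hat q_1(t))$ is measurable, and the bound $C(t)\in L^{p_s}(S)$ (obtained by tracking $\norm{f(t)}{L^2(\Omega)}$, $\norm{\nabla p_b(t)}{L^2(\Omega)}$, $\norm{\hat v_\Gamma(t)}{L^2(\Omega;H^1_\#(Y))}$, all in $L^{p_s}(S)$) gives the claimed $L^{p_s}(S)$-integrability. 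The main obstacle is verifying that the transformed bilinear form $a$ genuinely controls the full gradient $\nabla_y v$ from below uniformly in $t$; everything hinges on the Korn-type estimate surviving the presence of the space-time-dependent matrix $A_0\Psi_0^{-\top}$ and the normalisation $L^2_0$ on the cell-pressure, but since $\det(\Psi_0^{-1})=J_0^{-1}$ is uniformly bounded away from $0$ this is exactly covered by the already-established Lemmata \ref{lemma:KornUniformCoeff}--\ref{lemma:KornCoeff}, so the argument closes.
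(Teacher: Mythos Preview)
Your proposal is essentially correct and follows the same saddle-point strategy as the paper: define $a_t$, $b_t$, verify coercivity and inf--sup (the latter via Lemma \ref{lemma:InfSupLimit}), apply Lemma \ref{lemma:SaddlePoint} pointwise in $t$, and recover measurability via Lemma \ref{lemma:ContinuityWithRespectToData}.

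There is one small correction worth noting. You claim that, ``by symmetrising the test direction exactly as in \eqref{eq:CoercivityAeps}'', one obtains $a(v,v)=\intOYp \nu J_0 |\hat e_{y,t}(v)|^2\,\dyx$. This identity is not correct for the limit bilinear form: in \eqref{eq:WeakLimitTransformed1} the elliptic term is $\nu A_0\Psi_0^{-\top}\nabla_y \hat w_0 : \nabla_y\varphi$, with the \emph{full} gradient (the symmetric part was already removed at the end of the proof of Theorem \ref{thm:HomogenisationTransformed}). Hence
\[
a(v,v)=\intOYp \nu J_0\,\Psi_0^{-\top}\nabla_y v : \Psi_0^{-\top}\nabla_y v \,\dyx
=\intOYp \nu J_0\,|\Psi_0^{-\top}\nabla_y v|^2\,\dyx,
\]
and coercivity follows directly from $J_0\geq c_J$ and the uniform invertibility of $\Psi_0$, together with the Poincar\'e inequality on $H^1_{\Gamma\#}(\Yp)$ --- no Korn-type inequality is needed here. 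Your detour through $\hat e_{y,t}$ and Lemmata \ref{lemma:KornUniformCoeff}--\ref{lemma:KornCoeff} still yields a valid lower bound (since $|\Psi_0^{-\top}\nabla_y v|^2\geq |\hat e_{y,t}(v)|^2$), but it is unnecessary, and the claimed equality with the symmetric-gradient expression does not hold. The paper's proof uses the simpler direct estimate.
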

\begin{proof}
Note that the existence of a solution for a.e.~$t\in S$ is, up to the measurability with respect to time, already secured by the homogenisation process.
However, it remains to prove the uniqueness.
We rewrite \eqref{eq:WeakLimitTransformed1}--\eqref{eq:WeakLimitTransformed3} in the setting of the generic saddle-point formulation of Lemma \ref{lemma:SaddlePoint}.
Therefore, we define the following time-dependent bilinear forms:
\begin{align}\notag
&a_t :  \HYperN \times \HYperN  \rightarrow \R,
\\
&\phantom{a_t :}(v,w) \mapsto ( \nu A_0(t) \Psi_0^{-\top}(t) \nabla_y v,  \nabla_y w)_{\Omega \times Y^\mathrm{p}},
\\\notag
&b_t : \HYperN \times (H^1_0(\Omega) \times L^2(\Omega; L^2_0(Y^\mathrm{p}))) \rightarrow \R,
\\
&\phantom{a_t :}(v,(p_0,p_1)) \mapsto (A_0^\top(t) \nabla_x p_0,    v)_{\Omega \times Y^\mathrm{p}}
-
(p_1 , \div_y( A_0(t)  v))_{\Omega \times Y^\mathrm{p}}.
\end{align}

Using the time-independent boundedness of the transformation $\psi_0(t)$, the boundedness of $J_0 \geq c_J$ from below and the Poincar\'e inequality for $H^1_{\Gamma\#}(Y^\mathrm{p})$, we obtain
\begin{align} \notag
a_t(v,v) &=( \nu J_0(t) \Psi_0^{-\top}(t) \nabla_y v, \Psi_0^{-\top}(t) \nabla_y v)_{\Omega \times Y^\mathrm{p}}  
\geq \nu c_J \norm{\Psi_0^{-\top}(t) \nabla_y v}{\HYperN}^2
\\\notag
&\geq \nu c_J  \norm{\Psi_0^{\top}}{L^\infty(S \times \Omega \times \Yp)}^{-2} \norm{\nabla_y v}{\Omega\times \Yp}^2 \geq
C \norm{v}{\HYperN}^2,
\\\notag
a_t(v,w)  &\leq \norm{\sqrt{J_0}\Psi_0^{-\top}}{L^\infty(S \times \Omega \times \Yp)}^2
\norm{\nabla_y v}{\Omega\times \Yp}\norm{\nabla_y w}{\Omega\times \Yp}
\\\notag
&\leq 
C \norm{w}{\HYperN} \norm{v}{\HYperN}
\end{align}
for any $v,w \in \HYperN$ for a time-independent constant $C$.

Now, let $(v, (p_0,p_1)) \in \HYperN \times (H^1_0(\Omega) \times L^2(\Omega; L^2_{0\#}(Y^\mathrm{p})))$. Using the product rule, as well as the Poincar\'e inequalities for $H^1_0(\Omega)$ and $H^1_{\Gamma\#}(Y^\mathrm{p})$, yields
\begin{align*}
b_t(v,(p_0,p_1)) = (J_0(t) \Psi^{-\top}_0(t) \nabla_x p_0,    v)_{\Omega \times Y^\mathrm{p}}
-
(p_1 , \div_y( J_0(t) \Psi^{-1}_0(t)  v))_{\Omega \times Y^\mathrm{p}}
\\
\leq C \norm{\nabla p_0}{\Omega} \norm{v}{\Omega \times Y^\mathrm{p}} 
+ C\norm{p_1}{\Omega \times Y^\mathrm{p}}   \norm{v}{\Omega \times Y^\mathrm{p}} + C\norm{p_1}{\Omega \times Y^\mathrm{p}}  \norm{\nabla v}{\Omega \times Y^\mathrm{p}}
\\
\leq C (\norm{p_0}{H^1_0(\Omega)} +  \norm{p_1}{\Omega \times \Yp}) \norm{v}{\HYperN}
\end{align*}
for a time-independent constant $C$.

From Lemma \ref{lemma:InfSupLimit}, we get a time-independent inf--sup constant for $b_t$.

Since the right-hand sides of \eqref{eq:WeakLimitTransformed1}--\eqref{eq:WeakLimitTransformed3} can be bounded pointwise for a.e.~$t \in S$ by $C \in L^{p_s}(S)$, we can conclude for a.e.~$t \in S$ with Lemma \ref{lemma:SaddlePoint} the existence of a unique solution $(\hat{w}(t), \hat{q}(t),\hat{q}_1(t)) \in \HYperN \times H^1_0(\Omega) \times L^2(\Omega; L^2_0(Y^\mathrm{p}))$ such that
\begin{align}
\norm{\hat{w}(t)}{\HYperN} + \norm{\hat{q}(t)}{H^1(\Omega)} + \norm{\hat{q}_1(t)}{L^2(\Omega;L^2_{0 \#}(Y^\mathrm{p}))} \leq C(t)
\end{align}
for $C\in L^p(S)$

Using the same argumentation as in the proof of Theorem \ref{thm:ExistenceEps}, we obtain additionally the measurability of $(\hat{w}_0,\hat{q},\hat{q}_1)$ with respect to time.	
\end{proof}

\section{The limit problem in the evolving domain}\label{section:BackTrafo}
\subsection{Back-transformation of the limit problem}
Now, we transform the two-pressure Stokes problem \eqref{eq:WeakLimitTransformed1}--\eqref{eq:WeakLimitTransformed3} from the cylindrical substitute domain into the actual two-scale domain. The result is the two-pressure Stokes problem \eqref{eq:weakFormV01}--\eqref{eq:weakFormV03}, which does not depend on the transformation $\psi_0$. Thereby, the two-scale-transformation method (cf.~\cite{Wie21}) transforms the two-scale convergence results of the substitute problem (cf.~Theorem \ref{thm:HomogenisationTransformed}) into two-scale convergence results of the untransformed setting (cf.~Theorem \ref{thm:TscLimitUntransformed}).

Moreover, the homogenisation of the Stokes problem yields not only the two-scale convergence for the pressure but a strong convergence for an appropriate extension of it (cf.~Lemma \ref{lemma:ConvergenceQe}). Using the two-scale-transformation method, we can transform the strong convergence of $\hat{Q}$ back and obtain the strong convergence for the back-transformed extension of the pressure $Q' = \hat{Q} \circ \psi_\e^{-1}$ (cf.~Lemma \ref{lemma:ConvergenceQe'}). Indeed $Q'$ is some extension of the pressure of the original problem but this extension is not transformation-independent. Nevertheless, it can be used in order to show the strong convergence for the extension of the pressure $Q_\e$ which we have chosen in Theorem \ref{thm:ConvergenceQe}. This extension is the transformation-independent counterpart of the extension of Lemma \ref{lemma:ConvergenceQe} for the untransformed setting.

In the last step, we separate the $y$-dependency in the two-pressure Stokes problem \eqref{eq:weakFormV01}--\eqref{eq:weakFormV03} and derive the Darcy law for evolving microstructure \eqref{eq:DarcysLawWithVolume1}--\eqref{eq:DarcysLawWithVolume3}.
\begin{theorem}\label{thm:TscLimitUntransformed}
Let $(w_\e,q_\e) \in L^{p_s}(S;H^1_{\Ge(t)}(\Oe(t))) \times L^{p_s}(S;L^2(\Oe(t))$ be the solution of \eqref{eq:WeakFormUntransformed1}-- \eqref{eq:WeakFormUntransformed2} and let $(\widetilde{w_\e}, \widetilde{q}_\e) \in L^{p_s}(S;H^1(\Omega)) \times L^{p_s}(S;L^2(\Omega))$ be the corresponding extensions by $0$.
Then, for a.e.~$t\in S$, $\widetilde{w_\e}(t), \e  \widetilde{\nabla w_\e}(t)$ and $\widetilde{q}_\e(t)$  two-scale converge weakly with respect to the $L^2$-norm to $\widetilde{w_0}(t), \nabla\widetilde{  w_0}(t)$ and $\chi_{\Ypx(t)} q(t)$, respectively, where $\widetilde{w_0}$ is the extension of $w_0$ by $0$ on $S \times \Omega \times Y$ and $(w_0,q,q_1)$ the solution of 
\eqref{eq:weakFormV01}--\eqref{eq:weakFormV03}.
Moreover, for every $p \in (1,2)$, $\widetilde{q}_\e(t)$  two-scale converge weakly with respect to the $L^p$-norm to $\chi_{\Ypx(t)} q(t)$.
\end{theorem}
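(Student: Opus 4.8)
The plan is to transport the convergences already obtained for the substitute problem in Theorem~\ref{thm:HomogenisationTransformed} back to the evolving configuration by the two-scale-transformation results of \cite{Wie21}, and then to rewrite the transformed two-pressure Stokes system \eqref{eq:WeakLimitTransformed1}--\eqref{eq:WeakLimitTransformed3} in the transformation-independent form \eqref{eq:weakFormV01}--\eqref{eq:weakFormV03}. Fix $t\in S$ outside a null set. Since $\psi_\e(t)$ is a diffeomorphism of $\overline{\Omega}$ mapping $\Oe$ onto $\Oe(t)$ and $\Oes$ onto $\Oes(t)$, zero-extension commutes with the transformation, so that $\widetilde{w_\e}(t)=\widetilde{\hat w_\e}(t)\circ\psi_\e^{-1}(t)$ and, by the chain rule with $D_x\psi_\e^{-1}=\Psi_\e^{-1}\circ\psi_\e^{-1}$,
\[
\e\,\widetilde{\nabla w_\e}(t)=\bigl(\Psi_\e^{-\top}(t)\circ\psi_\e^{-1}(t)\bigr)\bigl((\e\,\nabla\widetilde{\hat w_\e}(t))\circ\psi_\e^{-1}(t)\bigr).
\]
By Theorem~\ref{thm:HomogenisationTransformed} and Lemma~\ref{lemma:HomogenisierungDivergence} the right-hand factors converge, weakly resp.\ strongly (for the coefficient, via Lemma~\ref{lemma:TscPsi} and Theorem~\ref{thm:Tsc-TrafoStrong}), in the two-scale sense; hence, using Theorem~\ref{thm:Tsc-Trafo} (which transports weak two-scale convergence without loss of exponent), the product rule for two-scale limits, and the transformation result for $\e$-scaled gradients of \cite{Wie21}, we obtain $\widetilde{w_\e}(t)\to\widetilde{w_0}(t)$ and $\e\,\widetilde{\nabla w_\e}(t)\to\nabla_y\widetilde{w_0}(t)$ weakly two-scale, where $w_0\txy\coloneqq\hat w_0(t,x,\psi_0^{-1}\txy)$; the identity $\Psi_0^{-\top}(t,x,\psi_0^{-1}\txy)(\nabla_y\hat w_0)(t,x,\psi_0^{-1}\txy)=\nabla_y\widetilde{w_0}(t)$ is just the chain rule in $y$ ($D_y\psi_0^{-1}=\Psi_0^{-1}\circ\psi_0^{-1}$). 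As $\hat w_0(t,x,\cdot)$ vanishes outside $\Yp$, the function $\widetilde{w_0}(t,x,\cdot)$ is supported in $\psi_0(t,x,\Yp)=\Ypx(t)$, so $\widetilde{w_0}$ is indeed the zero-extension of $w_0$.

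For the pressure, recall from Lemma~\ref{lemma:ConvergenceQe} that $\hat q_\e=\hat Q_\e$ on $\Oe$ and $\hat Q_\e(t)\to\hat q(t)$ strongly in $L^2(\Omega)$, so that the zero-extension of $\hat q_\e(t)$ equals $\chi_{\Oe}\hat Q_\e(t)$ and, as above, $\widetilde{q}_\e(t)=\bigl(\chi_{\Oe}\hat Q_\e(t)\bigr)\circ\psi_\e^{-1}(t)$. Since $\chi_{\Oe}(x)=\chi_{\Yp}(x/\e)$ converges to $|\Yp|$ weakly-$\ast$ in $L^\infty(\Omega)$ while $\hat Q_\e(t)$ converges strongly in $L^2(\Omega)$, the sequence $\chi_{\Oe}\hat Q_\e(t)$ converges strongly two-scale in $L^2$ to $\chi_{\Yp}(y)\hat q(t,x)$. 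Applying Theorem~\ref{thm:Tsc-Trafo} then gives $\widetilde{q}_\e(t)\to\chi_{\Yp}(\psi_0^{-1}(t,x,y))\hat q(t,x)=\chi_{\Ypx(t)}(y)\,q(t,x)$ weakly two-scale in $L^2$, with $q\coloneqq\hat q\in L^{p_s}(S;H^1_0(\Omega))$; the identification $q=\hat q$ is forced because $\hat q$ does not depend on $y$ and the macroscopic variable is unaffected by the transformation. As $\chi_{\Oe}\hat Q_\e(t)$ is bounded in $L^2(\Omega)\subset L^p(\Omega)$ for $p\in(1,2)$, the same argument with exponent $p$ (or Theorem~\ref{thm:Tsc-TrafoStrong}) yields the weak $L^p$-two-scale statement.

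It remains to identify $(w_0,q,q_1)$ as the solution of \eqref{eq:weakFormV01}--\eqref{eq:weakFormV03}. One substitutes $\hat w_0\txy=w_0(t,x,\psi_0\txy)$, $\hat q=q$ and an ansatz of the form $\hat q_1\txy=q_1(t,x,\psi_0\txy)+\check{\psi}_0\txy\cdot\nabla_x q\tx$ for the microscopic pressure -- in complete analogy with the relation $\hat p_{b,1}(t)=p_{b,1}(t,\cdot_x,\psi_0(t,\cdot_x,\cdot_y))+\check{\psi}_0\cdot\nabla_x p_b(t)$ encountered earlier -- into \eqref{eq:WeakLimitTransformed1}--\eqref{eq:WeakLimitTransformed3}, carries out the change of variables $y\mapsto\psi_0(t,x,y)$ in the cell integrals (with $\dy=J_0\,\dd\eta$) together with the corresponding transformation of the admissible test functions, and rewrites $\nabla_y$ and $\div_y$ via the chain rule; the $\check{\psi}_0$-terms generated by the macroscopic derivative $\nabla_x\hat q$ in \eqref{eq:WeakLimitTransformed1} are exactly absorbed by this redefinition of $q_1$, whence \eqref{eq:weakFormV01}--\eqref{eq:weakFormV03} is obtained. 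Since \eqref{eq:weakFormV01}--\eqref{eq:weakFormV03} has a unique solution -- obtained by transporting Theorem~\ref{thm:HomogenisedReferenceExistence} through the same change of variables -- every subsequence extracted above has the same limit, so the convergences hold for the full sequence; measurability in $t$ is inherited from that of $(\hat w_0,\hat q,\hat q_1)$ and of $\psi_0$. I expect the main obstacle to be precisely this last identification step -- carrying out the change of variables in the cell integrals and, above all, choosing the correction for $q_1$ so that \eqref{eq:weakFormV01}--\eqref{eq:weakFormV03} becomes genuinely independent of the chosen $\psi_\e$; a secondary technical point is the bookkeeping of integrability exponents, since Theorem~\ref{thm:Tsc-TrafoStrong} transports strong two-scale convergence only with a loss of exponent while Theorem~\ref{thm:Tsc-Trafo} transports weak two-scale convergence without loss.
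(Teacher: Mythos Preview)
Your approach is essentially the paper's: transport the convergences of Theorem~\ref{thm:HomogenisationTransformed} via Theorems~\ref{thm:Tsc-Trafo}/\ref{thm:Tsc-TrafoStrong}, then change variables $y\mapsto\psi_0(t,x,y)$ in \eqref{eq:WeakLimitTransformed1}--\eqref{eq:WeakLimitTransformed3} and absorb the residual transformation terms into a redefined microscopic pressure. Your route for the weak $L^2$ two-scale convergence of $\widetilde{q}_\e$ (directly via Theorem~\ref{thm:Tsc-Trafo}, after noting that $\chi_{\Oe}\hat Q_\e(t)$ converges strongly two-scale in $L^2$) is even slightly cleaner than the paper's detour through strong $L^p$, $p<2$, plus a separate $L^2$-boundedness argument.

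Two places where the identification step is incomplete. First, the correction in $q_1$ must also absorb the $\nabla_x p_b$ artifact: in \eqref{eq:WeakLimitTransformed1} the term $A_0^\top\nabla_x p_b$ undergoes exactly the same manipulation as $A_0^\top\nabla_x\hat q$, so the paper's definition is
\[
q_1\txy=\hat q_1\bigl(t,x,\psi_0^{-1}\txy\bigr)+\check{\psi}_0^{-1}\txy\cdot\bigl(\nabla_x q\tx+\nabla_x p_b\tx\bigr),
\]
not just with $\nabla_x q$. Your ansatz, after the change of variables, yields only the $\nabla_x q$ part. Second, you do not say how the transformation coefficient disappears from the macroscopic compressibility condition \eqref{eq:WeakLimitTransformed3}. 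After back-transformation one gets $\div_x\int_{\Ypx(t)}\Psi_0^{-\top}(t,x,\psi_0^{-1}\txy)\,w_0\,\dy$; this is not removed by any pressure redefinition but by writing $\Psi_0^{-\top}\circ\psi_0^{-1}=\1+\nabla_y\check{\psi}_0^{-1}$, integrating by parts in $y$, and then invoking the already established microscopic incompressibility $\div_y w_0=0$ to kill the extra term (this is the paper's computation \eqref{eq:RewriteMacroscopicCompress}). Both points are easily fixed, but as written your ``ansatz'' would not produce \eqref{eq:weakFormV01}--\eqref{eq:weakFormV03}.
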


The transformation-independent two-pressure Stokes problem in the actual two-scale domain is given by:
Find $(w_0, q,q_1) \in L^{p_s}(S;L^2(\Omega;H^1_{\Gamma(t)}(\Ypx(t)))) \times L^{p_s}(S;H^1_0(\Omega)) \times L^{p_s}(S;L^2(\Omega;L^2_0(\Ypx(t))))$ such that
\begin{align}\notag
\intO& \int\limits_{Y^p_x(t)} \nu \nabla_y v_0\txy : \nabla_y \varphi\xy +\nabla_x q\tx \cdot  \varphi\xy \dyx
\\\label{eq:weakFormV01}
-
&\intO \int\limits_{Y^p_x(t)} q_1\txy  \div(\varphi\xy) \dyx =  \intO\int\limits_{Y^p_x(t)}  (f\tx +  \nabla_x p_b\tx)\cdot  \varphi\xy \dyx  
\\\label{eq:weakFormV02}
\intO& \int\limits_{Y^p_x(t)} \div_y (v_0\txy) \  \phi_1 \xy  \dyx = 0
\\\label{eq:weakFormV03}
\intO& \div_x \Big( \int\limits_{Y^\mathrm{p}_x(t)} v_0 \txy \dy \Big) \ \phi_0(x) \dx 
= \intO \int\limits_{Y^p_x(t)} \div_y(v_\Gamma\txy) \dy \ \phi_0(x) \dx
\end{align}
for every $(\varphi, \phi_0,\phi_1) \in L^2(\Omega;H^1_{\Gamma(t) \#}(\Ypx(t))) \times H^1_0(\Omega) \times L^2(\Omega;L^2_0(\Ypx(t)))$.

\begin{proof}
Let $\widetilde{w_\e}, \widetilde{q_\e}$ be defined as in Theorem \ref{thm:TscLimitUntransformed}. Then,  we obtain from
Lemma \ref{lemma:EquivalenceTransformation} that
$\widetilde{w_\e}(t,\psi_\e(t, \cdot_x)) = \widetilde{\hat{w}_\e}(t,\cdot_x)$ and $
\widetilde{q_\e}(t, \psi_\e(t,\cdot_x)) = \hat{Q}_\e(t,\cdot_x)  \chi_\Oe$
for a.e.~$t\in S$.
The two-scale transformation rule (cf.~Theorem \ref{thm:Tsc-Trafo}) provides, for a.e.~$t\in S$, that $\widetilde{w_\e}(t)$ and $\widetilde{ \nabla w_\e}(t)$ two-scale converge to 
$\widetilde{w_0}(t, \cdot_x, \cdot_y) = \hat{w}_0(t,\cdot_x,  \psi_0^{-1}(t, \cdot_x,\cdot_y))$ and $\nabla_y \widetilde{w}_0 \txy$, respectively, where
$\hat{w}_0$ is the two-scale limit of $\widetilde{\hat{w}_\e}$ given by Theorem \ref{thm:HomogenisationTransformed}.
Moreover, the strong convergence of $\hat{Q}_\e(t)$ to $\hat{q}(t)$ in $L^2(\Omega)$ and the strong two-scale convergence of $\chi_\Oe$ to $\chi_\Yp$ with respect to every $L^p$-norm for $p\in (1,\infty)$ gives the strong two-scale convergence of $\widetilde{q_\e}(t) = \chi_\Oe\hat{Q}_\e(t,\cdot_x)  $ to $\chi_\Yp \hat{q}(t)$ with respect to every $L^p$-norm for $p\in (1,2)$. Theorem \ref{thm:Tsc-TrafoStrong} transforms this into the strong two-scale convergence of $\widetilde{q_\e}(t)$ to $\chi_{\Ypx(t)} \hat{q}(t)$  with respect to every $L^p$-norm for $p\in (1,2)$. In order to obtain additionally the weak two-scale convergence with respect to the $L^2$-norm, it is sufficient to show that $\norm{\widetilde{q_\e}(t)}{\Omega}$ is bounded. By transforming $\widetilde{q_\e}$ back and using the uniform boundedness of $J_\e$ and the estimate on $\hat{Q}_\e(t)$ (see \eqref{eq:HatQeBounded}), we obtain this boundedness
\begin{align}
\norm{\widetilde{q_\e}(t)}{\Omega} = 
\norm{\sqrt{J_\e(t)} \hat{Q}_\e}{\Oe} \leq \norm{\sqrt{J_\e(t)} \hat{Q}_\e}{\Omega} \leq C.
\end{align}

We still have to derive the transformation-independent limit problem \eqref{eq:weakFormV01}--\eqref{eq:weakFormV03} in its actual coordinates. Therefore, we test \eqref{eq:WeakLimitTransformed1}--\eqref{eq:WeakLimitTransformed3} with $\hat{\varphi}\xy = \varphi(x, \psi_0\txy)$ and transform the $\Yp$-integral with $\psi_0^{-1}(t,x,\cdot_y)$. Then, we obtain:
\begin{align}
\intO& \int\limits_{Y^p_x(t)} \nu \nabla_y w_0\txy : \nabla_y \varphi\xy \dyx  
\\\notag
&+ \intO \int\limits_{Y^p_x(t)}\Psi_0^\top(t,x,\psi_0^{-1}\txy) (\nabla_x q\tx + \nabla_x p_b\tx) \cdot  \varphi\xy \dyx
\\\label{eq:weakFormW0Back1}
-
&\intO \int\limits_{Y^p_x(t)} \hat{q}_1(t,x, \psi_0^{-1}\txy)  \div(\varphi\xy) \dyx = \intO\int\limits_{Y^p_x(t)} f\tx \cdot  \varphi\xy \dyx  
\\\label{eq:weakFormW0Back2}
\intO& \int\limits_{Y^p_x(t)} \div_y (v_0\txy)   \phi_1 \xy  \dyx = 0
\\\notag
\intO& \div_x\Big( \int\limits_{Y^\mathrm{p}_x(t)}  \Psi_0^{-\top}(t,x,\psi_0^{-1}\txy) v_0 \txy \dy \Big) \phi_0(x) \dx 
\\\label{eq:weakFormW0Back3}
&= \intO \int\limits_{Y^p_x(t)} \div_y(v_\Gamma\txy) \dy \ \phi_0(x) \dx
\end{align}
Note that the transformation coefficients vanish in front of the $y$-derivatives because of the product rule. However, we want to get completely rid of them and  note that
\begin{align*}
\Psi_0^{-\top}(t,x,\psi_0^{-1}\txy) =
\nabla_y \psi_0^{-1}\txy = \1 +\nabla_y \check{\psi}_0^{-1}\txy.
\end{align*}
Thus, we can rewrite the macroscopic pressure terms and obtain after integration by parts
\begin{align}\notag
&\intO \int\limits_{Y^p_x(t)} \Psi_0^{-\top}(t,x,\psi_0^{-1}\txy) (\nabla_x q\tx + \nabla_x p_b\tx) \cdot \varphi\xy \dyx 
\\\notag
=&
\intO \int\limits_{Y^p_x(t)} (\nabla_x q\tx + \nabla_x p_b\tx) \cdot \varphi\xy \dyx 
\\\notag
&+
\intO \int\limits_{Y^p_x(t)} \nabla_y \check{\psi}_0^{-1}\txy (\nabla_x q\tx + \nabla_x p_b\tx) \cdot \varphi\xy \dyx 
\\\notag
=&
\intO \int\limits_{Y^p_x(t)} (\nabla_x q\tx + \nabla_x p_b\tx) \cdot \varphi\xy \dyx
\\\label{eq:RewriteMacroscopicPressures}
&-
\intO \int\limits_{Y^p_x(t)} \check{\psi}_0^{-1}\txy(\nabla_x q\tx + \nabla_x p_b\tx) \div_y(\varphi\xy) \dyx.
\end{align}
The boundary integral, which arises in the integration by parts in \eqref{eq:RewriteMacroscopicPressures}, vanishes on $\partial \Yp \cap \partial Y$ because all the terms are $Y$-periodic and on $\partial \Yp \setminus \partial Y$ because $\varphi =0$ there.
As the last term of \eqref{eq:RewriteMacroscopicPressures} has only a microscopic contribution, we can add it to the microscopic pressure. We define
\begin{align*}
q_1\txy &\coloneqq \hat{q}_1(t,x,\psi_0^{-1}\txy) +  \check{\psi}_0^{-1}\txy \cdot (\nabla_x q\tx + \nabla_x p_b\tx)
\end{align*}
so that the pressure terms of \eqref{eq:weakFormW0Back1} transform to the pressure terms in \eqref{eq:weakFormV01}.
By a similar argumentation, we rewrite the left-hand side of \eqref{eq:weakFormW0Back3}
\begin{align}\notag
\intO \div_x\Big(\int\limits_{Y^p_x(t)} \Psi_0^{-\top}(t,x,\psi_0^{-1}\txy) v_0\txy \dy\Big) \varphi(x) \dx
\\
=
\intO \div_x\Big(\intO\int\limits_{Y^p_x(t)}  v_0\txy + \check{\psi}_0^{-1}\txy  \div_y( v_0\txy) \dy\Big) \varphi(x) \dx
\label{eq:RewriteMacroscopicCompress}.
\end{align}
The second summand on the right-hand side of \eqref{eq:RewriteMacroscopicCompress} vanishes because of the microscopic incompressibility condition \eqref{eq:weakFormW0Back2}. Thus, we have rewritten the left-hand side of \eqref{eq:weakFormW0Back3} into the left-hand side of \eqref{eq:weakFormV03}.
\end{proof}
For the case of a no-slip boundary condition at the interface $\Ge$, in which $v_\Ge\tx = \psi_\e(t,\psi_\e^{-1}\tx)$ models the boundary deformation, we can simplify the right-hand side of the macroscopic compressibility condition \eqref{eq:weakFormV03} in the two-pressure Stokes system.
\begin{cor}\label{cor:vGamma=Deformation}
If $v_\Ge$ is the velocity of the boundary deformation, i.e.~$v_\Ge = \partial_t \psi_\e(t,\psi_\e^{-1}\txy)$, the right-hand side of \eqref{eq:WeakLimitTransformed3}, and equivalently the right-hand side of \eqref{eq:weakFormV03}, can be rewritten into
\begin{align}\label{eq:WeakDirichlet=Deformation}
-\intOYp \div_y(A_0(t) \hat{v}_\Gamma\txy) dy \varphi_0(x) \dx =- \intO \partial_t |\Ypx(t)| \varphi_0(x) \dx
\end{align}	
\end{cor}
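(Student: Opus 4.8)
The plan is to reduce the claim to the pointwise identity $\div_y\big(A_0\txy\,\hat v_\Gamma\txy\big)=\partial_t J_0\txy$, which follows from Piola's identity and Jacobi's formula once the transformed interface velocity $\hat v_\Gamma$ has been identified explicitly. First I would identify $\hat v_\Gamma$: by \eqref{eq:TrafoData} together with the hypothesis $v_\Ge\tx=\partial_t\psi_\e(t,\psi_\e^{-1}\tx)$ we get, for a.e.~$t\in S$,
\[
\hat v_{\Ge}\tx = v_\Ge(t,\psi_\e\tx) = \partial_t\psi_\e\tx ,
\]
so that $\tfrac1\e\hat v_{\Ge}(t)=\tfrac1\e\partial_t\psi_\e(t)$ two-scale converges weakly to $\partial_t\psi_0(t)$ by the final item of Assumption~\ref{ass:Data} (applied with the zero multiindex). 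Since $\hat v_\Gamma(t)$ is, by definition, the two-scale limit of $\tfrac1\e\hat v_{\Ge}(t)$, uniqueness of the two-scale limit yields $\hat v_\Gamma\txy=\partial_t\psi_0\txy$ for a.e.~$\tx\in S\times\Omega$ and a.e.~$y\in Y$.

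Next I would prove the pointwise identity. Since $A_0=J_0\Psi_0^{-1}$ is the adjugate of $\Psi_0=D_y\psi_0$ and $\psi_0\txy$ is $C^2$ in $y$ by Assumption~\ref{ass:psi}, Piola's identity asserts that the columns of $A_0$ are $y$-divergence free. Hence, using the product rule and $D_y\partial_t\psi_0=\partial_t\Psi_0$,
\[
\div_y\big(A_0\,\partial_t\psi_0\big) = \operatorname{tr}\big(A_0\,\partial_t\Psi_0\big) = \operatorname{tr}\big(J_0\Psi_0^{-1}\partial_t\Psi_0\big) = J_0\,\operatorname{tr}\big(\Psi_0^{-1}\partial_t\Psi_0\big),
\]
which by Jacobi's formula $\partial_t\det\Psi_0 = J_0\,\operatorname{tr}(\Psi_0^{-1}\partial_t\Psi_0)$ equals $\partial_t J_0$. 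Together with the first step this gives $\div_y\big(A_0\txy\hat v_\Gamma\txy\big)=\partial_t J_0\txy$.

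Finally I would integrate this over $\Yp$, interchange $\partial_t$ with $\intYp$ (justified by the regularity of $\psi_0$ from Assumptions~\ref{ass:psi} and~\ref{ass:Data}), and use the change of variables $\Ypx(t)=\psi_0(t,x,\Yp)$, which yields $|\Ypx(t)|=\intYp J_0\txy\,\dy$ because $J_0>0$. Thus
\[
\intYp \div_y\big(A_0\txy\hat v_\Gamma\txy\big)\,\dy = \intYp \partial_t J_0\txy\,\dy = \partial_t\intYp J_0\txy\,\dy = \partial_t|\Ypx(t)| ,
\]
and multiplying by $-\varphi_0(x)$ and integrating over $\Omega$ gives the asserted identity for the right-hand side of \eqref{eq:WeakLimitTransformed3}. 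For \eqref{eq:weakFormV03} the same conclusion follows because, by Theorem~\ref{thm:TscLimitUntransformed}, \eqref{eq:weakFormV03} is the back-transform of \eqref{eq:WeakLimitTransformed3}; equivalently one may note directly that $\intYp\div_y(A_0\hat v_\Gamma)\,\dy=\int_{\Ypx(t)}\div_y v_\Gamma\,\dy$ by the Piola change-of-variables formula, which again equals $\partial_t|\Ypx(t)|$. I expect the only genuinely non-routine step to be the identification $\hat v_\Gamma=\partial_t\psi_0$ by uniqueness of two-scale limits; the remaining steps are Piola's identity, Jacobi's formula, and a differentiation under the integral sign.
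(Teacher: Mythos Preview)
Your proposal is correct and follows essentially the same route as the paper: identify $\hat v_\Gamma=\partial_t\psi_0$, use the Piola identity to obtain $\div_y(A_0\,\partial_t\psi_0)=\partial_t J_0$, and then integrate, using $|\Ypx(t)|=\intYp J_0\,\dy$. The only difference is one of detail: the paper states the key identity $\div_y(J_0\Psi_0^{-1}\partial_t\psi_0)=\partial_t J_0$ directly as ``the Piola identity'', whereas you unpack it into Piola's divergence-free adjugate statement plus Jacobi's formula, and you justify $\hat v_\Gamma=\partial_t\psi_0$ explicitly via Assumption~\ref{ass:Data}(7) and uniqueness of two-scale limits rather than just asserting it.
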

\begin{proof} 
First, we note that $v_\Ge\txy = \partial_t \psi_\e(t,\psi_\e^{-1}\txy)$ yields $\hat{v}_\Ge = \partial_t \psi_\e$, which implies $\hat{v}_\Gamma = \partial_t \psi_0$. 
Thus, we can rewrite
\begin{align*}
-\intOYp \div_y(A_0(t) \hat{v}_\Gamma\txy) dy \varphi_0(x) \dx = -\intOYp \div_y(A_0(t) \partial_t \psi_0\txy) dy \varphi_0(x) \dx.
\end{align*}
Then, the Piola identity implies $\div_y(J_0 \Psi_0^{-1} \partial_t \psi_0) = \partial_t J_0$, which gives
\begin{align*}
-\intOYp \div_y(A_0(t) \partial_t \psi_0\txy) dy \varphi_0(x) \dx = -\intOYp \partial_t J_0\txy dy \varphi_0(x) \dx =
\\
-\intO \partial_t \int\limits_{\Ypx(t)} dy \varphi_0(x) \dx =- \intO \partial_t |\Ypx(t)| \varphi_0(x) \dx.
\end{align*}
\end{proof}

In the next step, we consider the limit $\e \to 0$ of the actual fluid velocity $v_\e$. Therefore, we extend $v_\e$ on $\Omega$ by $0$, which is not regularity preserving but conforms with the physical model that no fluid flow happens in the solid phase.
\begin{cor}\label{cor:LimVeps}
Let $v_\e \coloneqq w_\e - v_\Ge \in L^{p_s}(S;H^1_{\Ge(t)}(\Oe(t))$, where $w_\e$ is the solution of \eqref{eq:WeakFormUntransformed1}--\eqref{eq:WeakFormUntransformed2}. Let $\widetilde{v_\e}$ and $\widetilde{\nabla v_\e}$  be the extension by zero on $\Omega \times Y$. Then,
$\widetilde{v_\e}(t)$ and $\e \widetilde{\nabla v_\e}(t)$ two-scale converge to the extension by $0$ of $w_0(t)$ and $\nabla_y w_0(t)$, respectively, where $w_0$ is the solution of \eqref{eq:weakFormV01}--\eqref{eq:weakFormV03}.
\end{cor}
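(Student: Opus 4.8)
The plan is to obtain this as a direct consequence of Theorem \ref{thm:TscLimitUntransformed}, exploiting that $v_\e$ and the substituted unknown $w_\e$ differ only by the Dirichlet datum $v_\Ge$, which is of order $\e$ in $L^2(\Omega)$ and hence disappears in the two-scale limit. First I would record that, since $w_\e = v_\e - v_\Ge$ on $\Oe(t)$, the corresponding zero extensions satisfy $\widetilde{v_\e}(t) = \widetilde{w_\e}(t) + \widetilde{v_\Ge}(t)$ and $\widetilde{\nabla v_\e}(t) = \widetilde{\nabla w_\e}(t) + \widetilde{\nabla v_\Ge}(t)$ on $\Omega$, where $\widetilde{v_\Ge}(t)$ and $\widetilde{\nabla v_\Ge}(t)$ denote the extensions by $0$ of $v_\Ge(t)|_{\Oe(t)}$ and $\nabla v_\Ge(t)|_{\Oe(t)}$.

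Next I would dispose of the $v_\Ge$-contribution. Since extension by $0$ does not increase the $L^2$-norm, Assumption \ref{ass:Data} yields $\norm{\widetilde{v_\Ge}(t)}{\Omega} \le \norm{v_\Ge(t)}{\Omega} \le \e C(t)$ and $\e \norm{\widetilde{\nabla v_\Ge}(t)}{\Omega} \le \e \norm{\nabla v_\Ge(t)}{\Omega} \le \e C(t)$ for a.e.~$t\in S$ with $C\in L^{p_s}(S)$. Consequently $\widetilde{v_\Ge}(t)\to 0$ and $\e \widetilde{\nabla v_\Ge}(t)\to 0$ strongly in $L^2(\Omega)$, and in particular weakly in the two-scale sense, for a.e.~$t\in S$.

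Finally I would add the two pieces. By Theorem \ref{thm:TscLimitUntransformed}, $\widetilde{w_\e}(t)$ and $\e \widetilde{\nabla w_\e}(t)$ two-scale converge weakly to $\widetilde{w_0}(t)$ and $\nabla_y\widetilde{w_0}(t)$, where $\widetilde{w_0}$ is the extension by $0$ of the solution $w_0$ of \eqref{eq:weakFormV01}--\eqref{eq:weakFormV03}; summing with the previous step gives that $\widetilde{v_\e}(t)$ two-scale converges weakly to the extension by $0$ of $w_0(t)$ and $\e \widetilde{\nabla v_\e}(t)$ to the extension by $0$ of $\nabla_y w_0(t)$, as claimed. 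I do not expect any genuine obstacle here; the only points requiring care are the bookkeeping of the several zero extensions (the microscopic profile $w_0$ lives on $\Ypx(t)$ and is extended by $0$ to $Y$) and the observation, built into Assumption \ref{ass:Data}, that the prescribed boundary velocity is $O(\e)$ in $L^2(\Omega)$ and therefore invisible in the limit.
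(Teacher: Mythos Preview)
Your proposal is correct and follows essentially the same approach as the paper's proof: the paper also writes the difference $\widetilde{v_\e}(t)-\widetilde{w_\e}(t)=\chi_{\Oe(t)}v_\Ge$ (and similarly for the gradients), uses the bound $\norm{v_\Ge(t)}{\Omega}+\e\norm{\nabla v_\Ge(t)}{\Omega}\le \e C(t)$ from Assumption~\ref{ass:Data}, and then invokes Theorem~\ref{thm:TscLimitUntransformed}. Your version is just a slightly more detailed rendering of the same argument.
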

\begin{proof}
We note that $\widetilde{v_\e}(t) - \widetilde{w}_\e(t) = \chi_{\Oe(t)}  v_\Ge$ and  $\widetilde{\nabla v_\e}(t) -  \widetilde{\nabla w}_\e(t) = \chi_{\Oe(t)}  \nabla v_\Ge$.
Since $\norm{v_\Ge(t)}{\Omega} + \e \norm{\nabla v_\Ge(t)}{\Omega} \leq \e C(t)$ for a.e~$t\in S$ for $C \in L^{p_s}(S)$, Theorem \ref{thm:TscLimitUntransformed} gives the desired two-scale convergence.
\end{proof}

By transforming the extension of the pressure back, we obtain the following strong convergence result.
\begin{lemma}\label{lemma:ConvergenceQe'}
Let $Q_\e'\tx \coloneqq \hat{Q}_\e(t,\psi_\e^{-1}\tx)$, where $\hat{Q}_\e$ is given by Lemma \ref{lemma:ConvergenceQe}.
Then, $Q_\e'$ is an extension of $q_\e$, where $q_\e$ is the solution of \eqref{eq:WeakFormUntransformed1}--\eqref{eq:WeakFormUntransformed2}, and
$Q_\e'(t)$ converges strongly to $q(t)$ in $L^p(\Omega)$ for a.e.~$t\in S$ and every $p \in [1,2)$. Moreover, $Q_\e'(t)$ two-scale converges weakly with respect to the $L^2$-norm to $q(t)$.
\end{lemma}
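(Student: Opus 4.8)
The plan is to transport the strong convergence $\hat{Q}_\e(t) \to \hat{q}(t)$ in $L^2(\Omega)$, obtained in Lemma~\ref{lemma:ConvergenceQe} (see also Theorem~\ref{thm:HomogenisationTransformed}), back to the actual configuration via the two-scale transformation results of \cite{Wie21}. The crucial bookkeeping point is that, relative to the way Theorems~\ref{thm:Tsc-Trafo} and~\ref{thm:Tsc-TrafoStrong} are applied elsewhere, the roles are swapped: here $\hat{Q}_\e$ is the \emph{transformed} sequence while $Q_\e' = \hat{Q}_\e \circ \psi_\e^{-1}(t)$ is the \emph{untransformed} one, since $Q_\e'(t) \circ \psi_\e(t) = \hat{Q}_\e(t)$ on $\overline{\Omega}$ (recall $\psi_\e(t,\cdot)$ maps $\overline{\Omega}$ onto $\overline{\Omega}$).

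First I would verify that $Q_\e'$ extends $q_\e$. For $x \in \Oe(t) = \psi_\e(t,\Oe)$ one has $\psi_\e^{-1}(t,x) \in \Oe$; on $\Oe$, Lemma~\ref{lemma:ConvergenceQe} gives $\hat{Q}_\e(t,\cdot) = \hat{q}_\e(t,\cdot)$, and Lemma~\ref{lemma:EquivalenceTransformation} gives $\hat{q}_\e(t,\cdot) = q_\e(t,\psi_\e(t,\cdot))$. Composing, $Q_\e'(t,x) = \hat{q}_\e(t,\psi_\e^{-1}(t,x)) = q_\e(t,x)$ for $x \in \Oe(t)$.

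Next I would record that $Q_\e'(t)$ is bounded in $L^2(\Omega)$: transforming back and using the uniform bound on $J_\e$ (Lemma~\ref{lemma:EstimatesPsi}) together with \eqref{eq:HatQeBounded}, $\norm{Q_\e'(t)}{\Omega}^2 = \int_\Omega J_\e(t,y)\,\hat{Q}_\e(t,y)^2\,\dy \le C\norm{\hat{Q}_\e(t)}{\Omega}^2 \le C$. Hence, by the two-scale compactness Theorem~\ref{thm:T-SC-LpComp}, along a subsequence $Q_\e'(t)$ two-scale converges weakly to some $u_0(t) \in L^2(\Omega\times Y)$. The strong $L^2$-convergence of $\hat{Q}_\e(t)$ implies that $\hat{Q}_\e(t)$ two-scale converges strongly (with respect to the $L^2$-norm) to $\hat{q}(t)$, regarded as independent of $y$; Theorem~\ref{thm:Tsc-Trafo} then forces $\hat{q}(t,x) = u_0(t,x,\psi_0(t,x,y))$ for a.e.~$y$. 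Since $\hat{q}(t)$ does not depend on $y$ and $\psi_0(t,x,\cdot)$ is a bijection of $Y$, this gives $u_0(t,x,y) = \hat{q}(t,x) = q(t,x)$, where $q = \hat{q}$ is the identification already used in the proof of Theorem~\ref{thm:TscLimitUntransformed} (the back-transformation leaves the macroscopic pressure unchanged and only modifies $q_1$). As the macroscopic pressure of the limit problem is unique (Theorem~\ref{thm:HomogenisedReferenceExistence}), every two-scale cluster point of the bounded sequence $Q_\e'(t)$ equals $q(t)$, so $Q_\e'(t)$ two-scale converges weakly to $q(t)$ for the whole sequence, which is the ``moreover'' assertion.

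Finally, I would invoke Theorem~\ref{thm:Tsc-TrafoStrong}(2) with $\hat{u}_\e = \hat{Q}_\e(t)$ converging strongly in the two-scale sense (with respect to the $L^2$-norm) to $\hat{q}(t)$ and $u_\e = Q_\e'(t)$ converging weakly in the two-scale sense to $q(t)$, whose composition with $\psi_0$ in the $y$-slot is again $\hat{q}(t)$ since $q(t)$ is $y$-independent: this yields strong two-scale convergence of $Q_\e'(t)$ to $q(t)$ with respect to every $L^{p'}$-norm, $p' \in (1,2)$. Because $q(t)$ is independent of $y$, strong two-scale convergence amounts to weak $L^{p'}(\Omega)$-convergence together with $\norm{Q_\e'(t)}{L^{p'}(\Omega)} \to \norm{q(t)}{L^{p'}(\Omega)}$; by uniform convexity of $L^{p'}(\Omega)$ for $1 < p' < 2$ this is strong convergence in $L^{p'}(\Omega)$, and for $p = 1$ strong convergence in $L^1(\Omega)$ follows since $\Omega$ is bounded. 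The time measurability of $Q_\e'$ and $q$ is inherited from $\hat{Q}_\e$ and $\hat{q}$. I expect no substantial obstacle here; the only points requiring care are the swapped roles of the transformed and untransformed sequences in the transformation theorems and the identification of the two-scale limit as the $y$-independent function $q$, and the unavoidable loss of integrability exponent in Theorem~\ref{thm:Tsc-TrafoStrong} is exactly what confines the strong statement to $p < 2$.
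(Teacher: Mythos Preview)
Your proposal is correct and follows essentially the same route as the paper: both arguments transport the strong $L^2$-convergence of $\hat{Q}_\e(t)$ to $Q_\e'(t)$ via Theorem~\ref{thm:Tsc-TrafoStrong}, use the $y$-independence of $\hat{q}(t)$ to identify the back-transformed limit with $q(t)$, and obtain the weak $L^2$ two-scale convergence from the uniform $L^2$-bound $\norm{Q_\e'(t)}{\Omega}=\norm{\sqrt{J_\e(t)}\hat{Q}_\e(t)}{\Omega}\le C$. Your version is simply more explicit: you spell out the extension property, the subsequence argument for the weak two-scale limit, and the passage from strong two-scale convergence to strong $L^{p'}(\Omega)$-convergence via uniform convexity, whereas the paper states the latter equivalence (for $y$-independent limits) without justification.
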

\begin{proof}
Theorem \ref{thm:HomogenisationTransformed} shows that $\hat{Q}_\e(t)$ converges strongly in $L^2(\Omega)$ to $\hat{q}(t)$ for a.e.~$t\in S$. This implies the strong two-scale convergence of $Q_\e(t)$ with respect to the $L^2$-norm. Then, the two-scale transformation method translates this convergence into the strong convergence of $Q_\e'\tx \coloneqq \hat{Q}_\e(t,\psi_\e^{-1}\tx)$ to $q(t)$ with respect to the $L^p$-norm for $p \in(1,2)$ (cf.~Theorem \ref{thm:Tsc-TrafoStrong}). Since the two-scale limit $q(t)$ does not depend on $y$ it does not have to be transformed back. Furthermore, because $q(t)$ is independent of $y$, the strong two-scale convergence with respect to the $L^p$-norm is equivalent to the strong convergence in $L^p(\Omega)$.

In order to prove additionally the weak two-scale convergence with respect to the $L^2$-norm, it is sufficient to show that $\norm{Q_\e'(t)}{\Omega}$ is bounded. This boundedness follows immediately from the uniform boundedness of $J_\e$ and the uniform boundedness of $\norm{\hat{Q}_\e(t)}{\Omega}$ (cf.~\eqref{eq:HatQeBounded}) by $
\norm{Q_\e'(t)}{\Omega} = \norm{\sqrt{J_\e(t)}\hat{Q}_\e(t)}{\Omega} \leq C$.
\end{proof}
Note that the extension $Q_\e'$ of $q_\e$ given by Lemma \ref{lemma:ConvergenceQe'} is not transformation-independent. In particular, if $\psi_\e(k +\e Y) \neq k +\e Y$ for $k \in I_\e$, it can be easily seen that $Q_\e'$ is not constant on $\psi_\e(k +\e \Ys) \neq k + \e Y \cap \Oes(t)$.
However, Lemma \ref{lemma:ConvergenceQe'} makes it possible to prove the strong convergence for the following transformation-independent extension of $q_\e$, which is the counterpart of the extension in the substitute problem $Q_\e$  (see.~\ref{lemma:ConvergenceQe}).
\begin{theorem}\label{thm:ConvergenceQe}
Assume that $|k+ \e Y \cap \Oe(t)| \geq c $ for every $\e>0$ and $k \in I_\e$ with a time- and space-independent constant $c>0$.
Let 
\begin{align}
Q_\e\tx \coloneqq \begin{cases}
q_\e\tx & \textrm{if } x \in \Oe(t),
\\
\frac{1}{| k + \e Y \cap \Oe(t)|} \int\limits_{k + \e Y \cap \Oe(t)} q_\e(t,z) \dz &\textrm{if } x \in k + \e Y  \cap \Oes(t) \textrm{ for } k \in I_\e,
\end{cases}
\end{align}
where $q_\e$ is the second part of the solution of \eqref{eq:WeakFormUntransformed1}--\eqref{eq:WeakFormUntransformed2}.
Then, for a.e.~$t\in S$, $Q_\e(t)$ converges strongly in $L^p(\Omega)$, for every $p\in [1,2)$ to $q(t)$, where $ q\in L^{p_s}(S;H^1(\Omega))$ is the second part of the solution of \eqref{eq:weakFormV01}--\eqref{eq:weakFormV03}.
Moreover, $Q_\e(t)$ two-scale converges weakly with respect to the $L^2$-norm to $q(t)$.
\end{theorem}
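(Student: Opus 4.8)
The plan is to deduce the result from Lemma \ref{lemma:ConvergenceQe'} by a purely local comparison. Recall that $Q_\e'\tx \coloneqq \hat{Q}_\e(t,\psi_\e^{-1}\tx)$ is already known to converge strongly to $q(t)$ in $L^p(\Omega)$ for every $p\in[1,2)$ and weakly two-scale (with respect to the $L^2$-norm) to $q(t)$, and that $Q_\e$ and $Q_\e'$ both coincide with $q_\e$ on $\Oe(t)$; in particular $Q_\e$ is an extension of $q_\e$ and $\norm{q_\e(t)-q(t)}{L^p(\Oe(t))}\le \norm{Q_\e'(t)-q(t)}{L^p(\Omega)}\to 0$ for $p\in[1,2)$. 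Writing $Y_k \coloneqq k+\e Y$ for $k\in I_\e$, the cells $Y_k$ tile $\Omega$, and on $Y_k\cap\Oes(t)$ the function $Q_\e(t)$ equals the constant $m_k\coloneqq |Y_k\cap\Oe(t)|^{-1}\int_{Y_k\cap\Oe(t)}q_\e(t,z)\dz$, which is meaningful since $|Y_k\cap\Oe(t)|$ is bounded below by a fixed positive multiple of $|Y_k|=\e^N$, uniformly in $k$, $\e$ and $t$, by hypothesis.

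First I would prove the strong $L^p$-convergence by splitting $\norm{Q_\e(t)-q(t)}{L^p(\Omega)}^p = \norm{q_\e(t)-q(t)}{L^p(\Oe(t))}^p + \norm{Q_\e(t)-q(t)}{L^p(\Oes(t))}^p$; the first term vanishes by the estimate just recalled. For the second term I would estimate, for $x\in Y_k\cap\Oes(t)$, $|m_k-q(t,x)| \le |Y_k\cap\Oe(t)|^{-1}\int_{Y_k\cap\Oe(t)}|q_\e(t,z)-q(t,z)|\dz + \big| |Y_k\cap\Oe(t)|^{-1}\int_{Y_k\cap\Oe(t)}q(t,z)\dz - q(t,x)\big|$. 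The first summand is constant on the cell; by H\"older's inequality and the lower volume bound it is at most $(c\e^N)^{-1/p}\norm{q_\e(t)-q(t)}{L^p(Y_k\cap\Oe(t))}$, so after taking the $p$-th power, multiplying by $|Y_k\cap\Oes(t)|\le \e^N$ and summing over $k$ it contributes at most $c^{-1}\norm{q_\e(t)-q(t)}{L^p(\Oe(t))}^p\to 0$. The second summand I would control by a scaled Poincar\'e--Wirtinger inequality on $Y_k$: for $u\in H^1(Y_k)$ and measurable $E\subset Y_k$ with $|E|\ge c|Y_k|$ one has $\norm{u - |E|^{-1}\int_E u}{L^2(Y_k)} \le C(c)\,\e\,\norm{\nabla u}{L^2(Y_k)}$ with $C(c)$ independent of $k$, $\e$ and $t$, which follows from the usual Poincar\'e--Wirtinger inequality on $Y_k$ together with $\big| |Y_k|^{-1}\int_{Y_k}u - |E|^{-1}\int_E u\big| \le |E|^{-1/2}\norm{u-|Y_k|^{-1}\int_{Y_k}u}{L^2(Y_k)}$. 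Applying this with $u=q(t)$ and $E=Y_k\cap\Oe(t)$, then passing from $L^2$ to $L^p$ on each cell by H\"older's inequality (here $p<2$ is used) and summing over the $\#I_\e\sim\e^{-N}$ cells, the powers of $\e$ cancel and one is left with a bound of order $\e^p\,\norm{\nabla q(t)}{L^2(\Omega)}^p\to 0$. Altogether this gives $Q_\e(t)\to q(t)$ strongly in $L^p(\Omega)$ for every $p\in[1,2)$.

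Next I would establish the weak two-scale convergence with respect to the $L^2$-norm, following the pattern of the proofs of Theorem \ref{thm:TscLimitUntransformed} and Lemma \ref{lemma:ConvergenceQe'}: it suffices to bound $\norm{Q_\e(t)}{\Omega}$. Here $\norm{Q_\e(t)}{\Omega}^2 = \norm{q_\e(t)}{\Oe(t)}^2 + \sum_{k\in I_\e} m_k^2\,|Y_k\cap\Oes(t)|$, and Jensen's inequality together with the lower volume bound gives $m_k^2\,|Y_k\cap\Oes(t)| \le c^{-1}\int_{Y_k\cap\Oe(t)}q_\e(t,z)^2\dz$, so that $\norm{Q_\e(t)}{\Omega}^2 \le (1+c^{-1})\norm{q_\e(t)}{\Oe(t)}^2 \le C$, using that $\norm{q_\e(t)}{\Oe(t)} = \norm{\widetilde{q_\e}(t)}{\Omega}$ is uniformly bounded (cf.\ the proof of Theorem \ref{thm:TscLimitUntransformed}). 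By Theorem \ref{thm:T-SC-LpComp}, along any subsequence there is a further subsequence converging weakly two-scale in $L^2$; since $Q_\e(t)\to q(t)$ strongly in $L^p(\Omega)$ for $p\in(1,2)$ and $q(t)$ is independent of $y$, the weak $L^p$-two-scale limit of $Q_\e(t)$ equals $q(t)$, and since $L^{p'}(\Omega;C_\#(Y))$ (with $1/p+1/p'=1$) is dense in $L^2(\Omega;C_\#(Y))$ the weak $L^2$-two-scale limit must coincide with it; hence the whole sequence converges weakly two-scale in $L^2$ to $q(t)$.

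The step I expect to be the main obstacle is the second (solid-part) estimate above: one needs a Poincar\'e--Wirtinger constant that is uniform not only in the cell and in $\e$ but also in the measurable averaging set $Y_k\cap\Oe(t)$ over which $m_k$ is formed --- this is exactly where the uniform lower bound on $|Y_k\cap\Oe(t)|$ assumed in the theorem enters --- and one then has to check that the powers of $\e$ produced by summing the cellwise $L^p$-estimates balance out. The remaining ingredients are routine rescaling and the convergence of $Q_\e'$ already furnished by Lemma \ref{lemma:ConvergenceQe'}.
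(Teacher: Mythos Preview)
Your proof is correct, but it proceeds along a genuinely different line from the paper's. The paper does not compare $Q_\e$ with $q$ cell by cell; instead it rewrites $\mathcal{T}_\e(Q_\e(t))$ algebraically in terms of $\mathcal{T}_\e(\widetilde{q}_\e(t))$, $\mathcal{T}_\e(\chi_{\Oe(t)})$ and $\big(\int_Y \mathcal{T}_\e(\chi_{\Oe(t)})\,dz\big)^{-1}$, and then passes to the limit in each factor using the strong two-scale convergence of $\chi_{\Oe(t)}\to\chi_{\Ypx(t)}$ (from the transformation theory) and of $\widetilde{q}_\e(t)=\chi_{\Oe(t)}Q_\e'(t)\to\chi_{\Ypx(t)}q(t)$ (from Lemma~\ref{lemma:ConvergenceQe'}). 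The $L^2$-bound on $Q_\e(t)$ is obtained in the same way as yours, and the identification of the weak $L^2$ two-scale limit then follows.

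The main difference is what carries the solid-part estimate. You control the oscillation of the limit $q(t)$ at scale $\e$ via a Poincar\'e--Wirtinger inequality with averaging set $Y_k\cap\Oe(t)$, which makes explicit use of $q(t)\in H^1(\Omega)$ and of the uniform lower bound on $|Y_k\cap\Oe(t)|$; this is elementary and avoids the unfolding operator entirely. The paper's argument never uses the $H^1$-regularity of $q$; instead it relies on the strong two-scale convergence of the characteristic functions $\chi_{\Oe(t)}$, a piece of information specific to the transformation framework of the paper. So your route is more self-contained (only $L^p$-convergence of $Q_\e'$ and $q\in H^1$ are needed), while the paper's route is more in the spirit of its two-scale machinery and would still work for a limit pressure that is merely $L^2$.
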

\begin{proof}
In order to prove the two-scale convergence, we use the unfolding operator $\Te :L^p(\Omega) \rightarrow L^p(\Omega \times Y)$, which was introduced in \cite{CDG02} and the notations there.  
The unfolding operator allows us to translate between the strong two-scale convergence and the strong convergence in $L^p(\Omega \times Y)$ (cf. \cite{Wie21}).
Thus, $Q_\e(t)$ two-scale converges strongly to $q(t)$ if and only if $\Te(Q_\e(t))$ converges strongly in $L^p(\Omega \times Y)$ to $q(t)$. 
Using the definition of $\Te$, we can rewrite 
\begin{align}\notag
\Te(Q_\e(t))(x,y) = 
\begin{cases}
q_\e(t, \xeY + \e y) &\textrm{if } \xeY + \e y\in \Oe(t),
\\\notag
\frac{1}{|\Oe(t) \cap \xeY + \e Y|} \int\limits_{\xeY + \e Y} \hspace{-0.3cm} \widetilde{q}_\e(t,z) \dz &\textrm{if } \xeY + \e y\in \Oes(t)
\end{cases}
\\\notag
=\Te (\widetilde{q}_\e(t))\xy + \Te(\chi_{\Oes(t)})\xy \frac{1}{|\Oe(t) \cap \xeY+ \e Y|} \intY
\Te( \widetilde{q_\e}(t))(x,z) \dz
\\\label{eq:TeQe}
=
\Te (\widetilde{q}_\e(t))\xy + \Te(\chi_{\Oes(t)})\xy \frac{1}{\intY \Te(\chi_{\Oe(t)})(x,z) \dz} \intY
\Te (\widetilde{q_\e}(t))(x,z) \dz,
\end{align}
where $\widetilde{q}_\e$ is the extension by $0$ of $q_\e$.

In order to pass to the limit $\e \to 0$, we note that $\chi_\Oe$ two-scale converges strongly to $\chi_\Yp$ with respect to the $L^p$-norm for every $p \in (1,\infty)$. Then, Theorem \ref{thm:Tsc-TrafoStrong} implies that $\chi_{\Oe(t)}$ two-scale converges strongly to $\chi_{\Ypx(t)}$ with respect to every $L^p$-norm for  $p \in (1,\infty)$, which is equivalent to 
\begin{align}\label{eq:Conv:Oe(t)}
\Te(\chi_{\Oe(t)}) \to \chi_{\Ypx(t)}  \textrm{ in } L^p(\Omega \times Y) \textrm{ for every }p \in (1,\infty)
\end{align}
and gives also the strong convergence of $
\Te(\chi_{\Oes(t)})$ to $\chi_{Y\setminus\Ypx(t)} $ in $L^p(\Omega \times Y)$ for every $p \in (1,\infty)$.
Using the Cauchy--Schwarz inequality, we obtain additionally
the strong convergence of $\intY \Te(\chi_{\Oe(t)}) (\cdot_x,z) \dz$ to $\intY\chi_{\Ypx(t)}  (\cdot_x,z) \dz$ in $L^p(\Omega)$ for every $p \in (1,\infty)$.
Since $\intY \Te( \chi_{\Oe(t)})(\cdot_x,z) \dz \geq c >0$ is uniformly bounded from below and $\intY \chi_{\Ypx(t)}(\cdot_x,z) \dz = \intYp J_0(t,\cdot_x,z) \dz \geq |\Yp|c_J>0$ as well, we get
\begin{align}\label{eq:Conv:YpOe(t)}
\Big(\intY \Te(\chi_{\Oe(t)}) (\cdot_x,z) \dz \Big)^{-1} \to \Big(\intY\chi_{\Ypx(t)} (z) \dz\Big)^{-1}  \textrm{ in } L^p(\Omega) \textrm{ for every }p \in (1,\infty).
\end{align}
Moreover, we note that $\widetilde{q}_\e(t) = \chi_{\Oe(t)}Q_\e'$ for $Q_\e'$ defined in Lemma \ref{lemma:ConvergenceQe'}.
Then, the strong two-scale convergences of $\chi_{\Oe(t)}$ and of $Q_\e'(t)$, which is given by Lemma \ref{lemma:ConvergenceQe'}, imply the strong two-scale convergence of $\widetilde{q}_\e(t) = \chi_{\Oe(t)}Q_\e'(t)$ to $\chi_{\Ypx(t)}q(t)$ with respect to the $L^p$-norm for every $p \in (1,2)$. Hence, we obtain 
\begin{align}\label{eq:Conv:Teqe}
\Te (\widetilde{q}_\e(t)) \to \chi_{\Ypx(t)}q(t) \textrm{ in } L^p(\Omega \times Y) \textrm{ for every }p \in (1,2).
\end{align} 
Using the Cauchy--Schwarz inequality, we obtain additionally
\begin{align}\label{eq:Conv:YpTeqe}
\intY \Te (\widetilde{q}_\e(t))(\cdot_x,z) \dz \to \intY \chi_{\Ypx(t)}q(t) (\cdot_x,z) \dz \textrm{ in } L^p(\Omega) \textrm{ for every }p \in (1,2).
\end{align}
By combining \eqref{eq:Conv:Oe(t)},\eqref{eq:Conv:YpOe(t)}, \eqref{eq:Conv:Teqe} and \eqref{eq:Conv:YpTeqe}, we can pass to the limit in \eqref{eq:TeQe} and obtain
\begin{align}
\Te(\widetilde{q_\e}(t)) \to \chi_{\Ypx(t)}q(t) + \chi_{Y\setminus \Ypx(t)} \frac{1}{\intY \chi_{\Ypx(t)}(z) \dz } \intY \chi_{\Ypx(t)} q(t) \dz = q(t)
\end{align}
in $L^p(\Omega \times Y)$ for every $p \in (1,2)$.

In order to obtain the weak two-scale convergence with respect to the $L^2$-norm, we have to show the boundedness of $\norm{Q_\e}{\Omega}^2
= \norm{\widetilde{q_\e}(t)}{\Omega(t)}^2 + \norm{Q_\e(t)}{\Omega \setminus \Oe(t)}^2$.
By employing the Cauchy--Schwarz inequality, we obtain 
\begin{align*}
\norm{Q_\e(t)}{\Omega \setminus \Oe(t)}^2
&=
\sum\limits_{k \in I_\e} \int\limits_{k + \e Y} \left(\frac{1}{|k + \e Y \cap \Oe(t)}\int\limits_{k + \e Y} \widetilde{q}_\e(t,z) \dz \right)^2 \dy
\\
&\leq 
\sum\limits_{k \in I_\e} \int\limits_{k + \e Y} C \norm{\widetilde{q_\e}(t)}{k + \e Y}^2 \dy
\leq 
\sum\limits_{k \in I_\e} C \norm{\widetilde{q_\e}(t)}{k + \e Y}^2
\leq C
\norm{q_\e(t)}{\Oe(t)}^2.
\end{align*}
Thus, we estimate with the uniform boundedness of $J_\e$ and the estimate on $\hat{q}_\e$ (cf.~\eqref{eq:EpsIndependentEstimateVP})
\begin{align*}
\norm{Q_\e(t)}{\Omega \setminus \Oe(t)}^2 \leq C\norm{q_\e(t)}{\Oe(t)}^2 = \norm{\sqrt{J_\e(t)}\hat{q}_\e(t)}{\Oe}^2 \leq C \norm{\hat{q}_\e(t)}{\Oe}^2 \leq C.
\end{align*}
\end{proof}

\subsection{The Darcy law for evolving microstructure}
In the last step, we derive the Darcy law \eqref{eq:DarcysLawWithVolume1}--\eqref{eq:DarcysLawWithVolume3} by separating the $y$-dependence in \eqref{eq:weakFormV01}--\eqref{eq:weakFormV03}.
It contains the time- and space-dependent permeability tensor $K \in L^\infty(S\times \Omega)^{N \times N}$, which can be computed explicitly by 
\begin{align}\label{eq:Permeability}
K\tx_{ij} = (\nabla u_i\tx, \nabla u_j\tx )_{Y^\mathrm{p}_x(t)},
\end{align}
where $u_i \in L^\infty(S \times \Omega; H^1_{\Gamma \#}(Y^\mathrm{p}_x(t)))$ are the unique solution 
of the local Stokes problems on the cell domains $Y^\mathrm{p}_x(t)$,
\begin{align}\label{eq:StokesCellProblem1}
&- \Delta_y u_i\txy  - \nabla_y \pi_i \txy = e_i && \textrm{in } Y^\mathrm{p}_x(t),
\\\label{eq:StokesCellProblem2}
&\div ( u_i\txy ) = 0  &&\textrm{in } Y^\mathrm{p}_x(t),
\\\label{eq:StokesCellProblem3}
& u_i\txy = 0 && \textrm{on } \partial \Gamma_x(t),
\\\label{eq:StokesCellProblem4}
&y \mapsto \pi\txy, u_i\txy 	&&\textrm{is $Y$-periodic}.
\end{align}

The corresponding weak formulation of \eqref{eq:DarcysLawWithVolume1}--\eqref{eq:DarcysLawWithVolume3} consists of the following Dirichlet boundary-value problem \eqref{eq:DarcyP} for the pressure and the explicit equation for the fluid velocity \eqref{eq:DarcyV}, where $p = q+p_b$:
Find $q \in L^{p_s}(S;H^1_0(\Omega))$ such that, for a.e.~$t \in S$,
\begin{align}\notag
\intO \frac{1}{\nu}K\tx\nabla_x q\tx \cdot \nabla \varphi \dx = \intO \frac{1}{\nu}  K\tx\nabla_x (f\tx - \nabla p_b\tx) \cdot \nabla \varphi \dx 
\\\label{eq:DarcyP}
-\intOYp \div_y(v_\Gamma\tx) \dy \varphi(x) \dyx
\end{align}
for every $\varphi \in H^1_0(\Omega)$, where $K \in L^\infty(S\times \Omega)^{N \times N}$ is defined by \eqref{eq:Permeability} and let
\begin{align}\label{eq:DarcyV}
v(t) = \frac{1}{\nu} K(t) \left(f(t)- \nabla_x p(t) \right),
\end{align}
where $p = q + p_b$.

In the case of Corollary \ref{cor:vGamma=Deformation}, the last term of \eqref{eq:DarcyP} can be simplified into 
\begin{align*}
-\intOYp \div_y(v_\Gamma\txy) \dy \varphi(x) \dy=\intO \partial_t |\Ypx(t)|  \varphi(x) \dx
\end{align*}

\begin{theorem}\label{thm:Darcy}
Let $(\widetilde{v_\e}, Q_\e)$ be defined by Corollary \ref{cor:LimVeps} and Theorem \ref{thm:ConvergenceQe}, respectively. Then, for a.e.~$t \in S$, $Q_\e(t)$ converges weakly with respect to the $L^2$-norm to $q(t)$ and strongly with respect to the $L^p$-norm for every $p \in (1,2)$, where $q \in L^{p_s}(S;H^1_0(\Omega))$ is the unique solution of 
\eqref{eq:DarcyP}.
Moreover, $\widetilde{v_\e}(t)$ converges weakly with respect to the $L^2$-norm to $v(t)$, where $v \in L^{p_s}(S;L^2(\Omega))$ is given by
\eqref{eq:DarcyV}.
\end{theorem}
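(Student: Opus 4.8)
The plan is to separate the microscopic variable $y$ in the homogenised two-pressure Stokes system \eqref{eq:weakFormV01}--\eqref{eq:weakFormV03} in order to arrive at the Darcy law \eqref{eq:DarcyP}--\eqref{eq:DarcyV}, and then to read off the two asserted convergences from results already proved. Indeed, the weak $L^2(\Omega)$-convergence of $\widetilde{v_\e}(t)$ is contained in Corollary \ref{cor:LimVeps} (its two-scale limit $\widetilde{w_0}(t)$, tested against $y$-independent functions, yields the weak $L^2$-limit $\int_Y \widetilde{w_0}(t,\cdot,y)\,\dy$), and the convergence of $Q_\e(t)$ is exactly Theorem \ref{thm:ConvergenceQe}; what remains is to identify these limits with the solution of \eqref{eq:DarcyP}--\eqref{eq:DarcyV} and to show that \eqref{eq:DarcyP} is uniquely solvable, which upgrades the subsequences occurring in Theorem \ref{thm:ConvergenceQe} and Corollary \ref{cor:LimVeps} to the full sequences.

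First I would record the cell problems. For a.e.\ $\tx\in S\times\Omega$ the set $\Ypx(t)=\psi_0(t,x,\Yp)$ is a connected Lipschitz domain with $|\Gamma_x(t)|>0$, so standard Stokes theory (Lemma \ref{lemma:SaddlePoint} on $H^1_{\Gamma(t) \#}(\Ypx(t))^N\times L^2_0(\Ypx(t))$) gives unique solutions $(u_i,\pi_i)$ of \eqref{eq:StokesCellProblem1}--\eqref{eq:StokesCellProblem4}; measurability in $\tx$ and the $L^\infty$-bound follow from continuous dependence on the diffeomorphism $\psi_0$ (Lemma \ref{lemma:ContinuityWithRespectToData}, Assumption \ref{ass:psi}). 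Testing the weak form of the $u_j$-equation with $u_i$ --- admissible since $\div_y u_i=0$ --- gives $K\tx_{ij}=(\nabla u_i,\nabla u_j)_{\Ypx(t)}=\int_{\Ypx(t)}(u_i)_j\,\dy$, so $K$ is symmetric; it is positive definite because the $u_i$ are linearly independent, and uniformly positive definite in $\tx$ since the uniform $C^2$-bounds on $\psi_0$ make $\Ypx(t)$ uniformly comparable to the fixed cell $\Yp$.

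Next comes the scale separation. In \eqref{eq:weakFormV01} I would localise in $x$ (test with $\varphi\xy=\phi(x)\eta(x,y)$ and let $\phi$ concentrate at a Lebesgue point) and restrict to $y$-divergence-free $\eta$, eliminating $q_1$; this shows that for a.e.\ $\tx$ the slice $w_0(t,x,\cdot)$ solves the cell Stokes problem on $\Ypx(t)$ with the $y$-independent right-hand side $\frac{1}{\nu}\big(f\tx-\nabla_x p\tx\big)$, $p=q+p_b$. By linearity and uniqueness, $w_0\txy=\frac{1}{\nu}\sum_i u_i\txy\big(f_i\tx-\partial_{x_i}p\tx\big)$, whence $\int_{\Ypx(t)}w_0\txy\,\dy=\frac{1}{\nu}K\tx\big(f\tx-\nabla_x p\tx\big)=:v\tx$, i.e.\ \eqref{eq:DarcyV}. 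Substituting this into the macroscopic compressibility condition \eqref{eq:weakFormV03} and integrating by parts in $x$ (valid since $\phi_0\in H^1_0(\Omega)$) turns it into the weak Dirichlet problem \eqref{eq:DarcyP} for $q\in L^{p_s}(S;H^1_0(\Omega))$, so $p=p_b$ on $\partial\Omega$, which is \eqref{eq:DarcysLawWithVolume3}. Uniform ellipticity and boundedness of $K$, together with $f,\nabla_x p_b\in L^2(\Omega)$, make the form $(\frac{1}{\nu}K\nabla\cdot,\nabla\cdot)_\Omega$ coercive and continuous on $H^1_0(\Omega)$, so Lax--Milgram yields a unique $q(t)$ for a.e.\ $t$ with an $L^{p_s}(S)$-bound and measurability in $t$ obtained as in the proof of Theorem \ref{thm:ExistenceEps}. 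This $q$ is the one from Theorem \ref{thm:ConvergenceQe}; its uniqueness makes the subsequential convergences there and in Corollary \ref{cor:LimVeps} hold for the whole sequence, completing the proof.

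I expect the main obstacle to be the rigorous localisation in \eqref{eq:weakFormV01}: one must work in $L^{p_s}(S;L^2(\Omega;H^1_{\Gamma(t) \#}(\Ypx(t))))$ with an $x$-dependent pore geometry, justify that the slice $w_0(t,x,\cdot)$ is an admissible competitor and solution for the cell problem for a.e.\ $x$, and recover both the microscopic pressure $q_1$ and the cell pressures $\pi_i$ by a De Rham / inf--sup argument on $\Ypx(t)$. A secondary point, needed for well-posedness of \eqref{eq:DarcyP}, is the uniform ellipticity and $\tx$-measurability of the permeability tensor $K$, which I would derive from the uniform bounds on $\psi_0$ in Assumption \ref{ass:psi} and the continuous dependence of the cell Stokes solutions on the geometry.
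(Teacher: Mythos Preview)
Your approach is essentially the same as the paper's --- separate variables via the cell problems \eqref{eq:StokesCellProblem1}--\eqref{eq:StokesCellProblem4}, average over the cell to get \eqref{eq:DarcyV}, and substitute into \eqref{eq:weakFormV03} to obtain \eqref{eq:DarcyP} --- and the convergences of $Q_\e(t)$ and $\widetilde{v_\e}(t)$ are indeed already contained in Theorem~\ref{thm:ConvergenceQe} and Corollary~\ref{cor:LimVeps}.

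Two small points of divergence are worth noting. First, the paper avoids your localisation/Lebesgue-point argument entirely: rather than showing that each slice $w_0(t,x,\cdot)$ solves the cell problem, it simply observes that the ansatz $w_0=\tfrac{1}{\nu}\sum_i(f_i-\partial_{x_i}p)u_i$, $q_1=\tfrac{1}{\nu}\sum_i(\partial_{x_i}p-f_i)\pi_i$ satisfies the global two-pressure system \eqref{eq:weakFormV01}--\eqref{eq:weakFormV03}, and then invokes the global uniqueness already established (Theorem~\ref{thm:HomogenisedReferenceExistence}, transported to the actual domain). This sidesteps the $x$-dependent fibre spaces you flag as the main obstacle. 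Second, your remark about upgrading subsequences is misplaced here: the passage from subsequence to full sequence already happened in Theorem~\ref{thm:HomogenisationTransformed} via uniqueness of the two-pressure limit, so Theorem~\ref{thm:ConvergenceQe} and Corollary~\ref{cor:LimVeps} are already full-sequence statements. Your additional care about the uniform ellipticity of $K$ and the Lax--Milgram well-posedness of \eqref{eq:DarcyP} fills in details the paper leaves implicit.
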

\begin{proof}
The linearity of \eqref{eq:weakFormV01} gives
\begin{align}
v_0\txy = \frac{1}{\nu} \sum\limits_{i=1}^N (f_i\tx - \partial_{x_i} (q\tx + p_b\tx)) u_i\txy,
\\
q_1\txy = \frac{1}{\nu} \sum\limits_{i=1}^N ( \partial_{x_i} (q\tx + p_b\tx) - f_i\tx ) \pi_i\txy,
\end{align}
where $(u_i,\pi_i)$ is the solution of \eqref{eq:StokesCellProblem1}-\eqref{eq:StokesCellProblem4} for $i =\{1, \dots, N\}$.
Then, we obtain $v\tx= \frac{1}{\nu} K\tx (f\tx - (\nabla q\tx+ \nabla p_b \tx))$
for
$v\tx \coloneqq\intYp v_0\txy\dy$ by taking the average over $\Yp$, which gives \eqref{eq:DarcyV}.
Moreover, with \eqref{eq:weakFormV03}, we obtain the compressibility condition  $\div(v)= \intYp v_\Gamma\txy \dy$
and in the case of Corollary \ref{cor:vGamma=Deformation}, we can simply it to $\div(v)= \partial_t |\Ypx(t)|$.
Combining this compressibility condition with \eqref{eq:DarcyV} yields \eqref{eq:DarcyP}.
\end{proof}

By stating the compressibility condition $\div(v)= \intYp v_\Gamma\txy \dy= \partial_t |\Ypx(t)|$, which we have derived in the proof of Theorem \ref{thm:Darcy} separately, we obtain the strong formulation of the Darcy law for evolving microstructure \eqref{eq:DarcysLawWithVolume1}--\eqref{eq:DarcysLawWithVolume3}.
It differs in three points from the Darcy law for fixed microstructure \eqref{eq:DarcysLaw}. The first is the time- and space-dependent permeability tensor, which arises from the time- and space-dependent (evolving) microstructure. The second and most interesting difference is the macroscopic compressibility condition, which arises from the homogenisation of the inhomogeneous Dirichlet boundary condition.
The last difference is the Dirichlet boundary condition in \eqref{eq:DarcysLawWithVolume1}--\eqref{eq:DarcysLawWithVolume3} which is caused from the homogenisation of the pressure boundary condition.

\begin{remark}
Instead of the homogenisation for a.e.~$t\in S$ separately, the two-scale convergence with respect to the $L^{p_s}(S;L^p(\Omega))$-norm for $1 < p_s, p<\infty$ could have been used. Thus, the assumption on the data (cf.~Assumption \ref{ass:Data}) can be weakened accordingly.
\end{remark}

\begin{ack}
We would like to thank Markus Gahn for some useful comments on this subject.
\end{ack}

\printbibliography

@article{LNW02,
	author = {Dag Lukkassen and Gabriel Nguetseng and Peter Wall},
	title = {Two-scale convergence},
	journaltitle = {Int.~J.~Pure Appl.~Math.},
	date = {2002},
	volume = {2},
	pages = {35--86},
}

@article{All92,
	author = {Grégoire Allaire},
	title = {Homogenization and two-scale convergence},
	journaltitle = {Siam J. Math. Anal.},
	date = {1992},
	volume = {23},
	pages = {1482--1518},
}

@article{CDG02,
	author = {Doina Cioranescu and Alain Damlamian and Georges Griso},
	title = {Periodic unfolding and homogenization},
	journaltitle = {C. R. Acad. Sci. Paris Sér. 1},
	date = {2002},
	volume = {335},
	pages = {99--104},
}

@article{Zhi94,
	author = {Vasilii Vasil'evich Zhikov},
	title = {On the homogenization of the system of Stokes equations in a porous medium},
	journaltitle = {Russian Acad. Sci. Dokl. Math.},
	date = {1994},
	volume = {49},
	pages = {52--57},
}

@article{EM17,
	author = {Michael Eden and Adrian Muntean},
	title = {Homogenization of a fully coupled thermoelasticity problem for a highly heterogeneous medium with a priori known phase transformations},
	journaltitle = {Math. Methods Appl. Sci.},
	date = {2017},
	volume = {40},
	pages = {955--3972},
}

@article{GNP21,
	author = {Markus Gahn and Maria Neuss-Radu and Iulio Sorin Pop},
	title = {Homogenization of a reaction-diffusion-advection problem in an evolving micro-domain and including nonlinear boundary conditions},
	journaltitle = {J. Differ. Equations},
	date = {2021},
	volume = {289},
	pages = {95--127},
}

@article{Pet07,
	author = {Malte Andreas Peter},
	title = {Homogenisation in domains with evolving microstructure},
	journaltitle = {C.~R.~Mecanique},
	date = {2007},
	volume = {335},
	pages = {357--362},
}

@article{Pet07a,
	author = {Malte Andreas Peter},
	title = {Homogenisation of a chemical degradation mechanism inducing an evolving microstructure},
	journaltitle = {C.~R.~Mecanique},
	date = {2007},
	volume = {335},
	pages = {679--684},
}

@article{PB09,
	author = {Malte Andreas Peter and Michael Böhm},
	title = {Coupled reaction--diffusion processes inducing an evolution of the microstructure: Analysis and homogenization},
	journaltitle = {Nonlinear Anal.},
	date = {2009},
	volume = {70},
	pages = {806--821},
}

@article{All89,
	author = {Grégoire Allaire},
	title = {Homogenization of the Stokes Flow in a Connected Porous Medium},
	journaltitle = {Asymptotic Analysis},
	date = {1989},
	volume = {2},
	pages = {203--222},
}

@article{All91,
	author = {Grégoire Allaire},
	title = {Continuity of the Darcy's law in the low-volume fraction limit},
	journaltitle = {Ann. Scuola Norm. Sup. Pisa},
	date = {1991},
	volume = {18},
	pages = {475--499},
}

@article{Noo08,
	author = {Tycho L. van Noorden},
	title = {Crystal precipitation and dissolution in a porous medium: Effective equations and numerical experiments},
	journaltitle = {Multiscale Model. Simul.},
	date = {2008},
	volume = {7},
	pages = {1220--1236},
}

@article{REK15,
	author = {Nadja Ray and Tobias Elbinger and Peter Knabner},
	title = {Upscaling the flow and transport in an evolving porous medium with general interaction potentials},
	journaltitle = {SIAM J. Appl. Math.},
	date = {2015},
	volume = {75},
	pages = {2170--2192},
}

@article{RNF12,
	author = {Nadja Ray and Tycho van Noorden and Florian Frank and Peter Knabner},
	title = {Multiscale Modeling of Colloid and Fluid Dynamics in Porous Media Including an Evolving Microstructure},
	journaltitle = {Transport in Porous Media},
	date = {2012},
	volume = {95},
	pages = {669--696},
}

@article{SK17,
	author = {Raphael Schulz and Peter Knabner},
	title = {Derivation and analysis of an effective model for biofilm growth in evolving porous media},
	journaltitle = {Math. Method Appl. Sci.},
	date = {2017},
	volume = {40},
	pages = {2930--2948},
}

@article{SK17a,
	author = {Raphael Schulz and Peter Knabner},
	title = {An effective model for biofilm growth made by chemotactical bacteria in evolving porous media},
	journaltitle = {SIAM J. Appl. Math.},
	date = {2017},
	volume = {77},
	pages = {1653--1677},
}

@article{Pom03,
	author = {Waldemar Pompe},
	title = {Korn's first inequality with variable coefficients and its generalization},
	journaltitle = {Comment Math. Univ. Carol.},
	date = {2003},
	volume = {44},
	pages = {57--70},
}

@article{Wie21,
	author = {David Wiedemann},
	title = {The two-scale transformation method},
	journaltitle = {\\arXiv:2106.13730},
	date = {2021},
}

@book{San80,
	author = {Enrique Sanchez-Palencia},
	title = {Non-Homogeneous Media and Vibration Theory},
	date = {1980},
	OPTseries = {Lect. Notes Phys.},
	OPTpublisher = {Springer, Berlin, Heidelberg},
	OPTlocation = {Berlin, Heidelberg},
}

@suppbook{Tar79,
	author = {Luc Tartar},
	title = {Convergence of the homogenization process},
	booktitle = {Appendix of \autocite{San80}},
	date = {1979},
}

@book{Dar56,
	author = {Henry Darcy},
	title = {Les Fontaines Publiques de la Ville de Dijon},
	date = {1856},
	OPTpublisher = {Dalmont},
	OPTlocation = {Paris},
}

@book{Mir16,
	author = {Elena Miroshnikova},
	title = {Some new results in homogenization of flow in porous media with mixed boundary condition},
	date = {2016},
	OPTpublisher = {Licentiate thesis / Luleå University of Technology},
}

@incollection{Kel80,
	author = {Joseph Bishop Keller},
	editor = {Robert L. Sternberg},
	title = {Darcy's law for flow in porous media and the two-space method},
	book = {Nonlinear Partial Differential Equations in Engineering and Applied Sciences},
	date = {1980},
	volumes = {54},
	publisher = {Dekker},
	location = {New York},
	pages = {429--443},
}

@book{Lio81,
	author = {JAcques Louis Lions},
	title = {Some Methods in the Mathematical Analysis of Systems and Their Control},
	date = {1981},
	publisher = {Gordon and Breach},
	location = {New York},
}
\end{document}